\newtheorem{definition}{Definition}[section]
\newtheorem{lemma}[definition]{Lemma}
\newtheorem{theorem}[definition]{Theorem}
\newtheorem{proposition}[definition]{Proposition}
\newtheorem{corollary}[definition]{Corollary}
\newtheorem{example}[definition]{Example}
\newtheorem{remark}[definition]{Remark}
\begin{document}

\newcommand{\fin}{$\Box$\\}
\newcommand{\C}{\mathbf{C}}
\newcommand{\R}{\mathbf{R}}
\newcommand{\Q}{\mathbf{Q}}
\newcommand{\Z}{\mathbf{Z}}
\newcommand{\N}{\mathbf{N}}
\newcommand{\ds}{\displaystyle}
\newcommand{\saut}[1]{\hfill\\[#1]}
\newcommand{\vsp}{\vspace{.15cm}}
\newcommand{\difrac}{\displaystyle \frac}
\newcommand{\dist}{\textrm{dist}}
\newcommand{\diam}{\mathrm{diam}\ }
\newcommand{\var}{\mathrm{Var}}
\newcommand{\mbf}{\textbf}
\newcommand{\levy}{\mathscr{B}}
\newcommand{\sheet}{\mathbbm{B}}
\newcommand{\sifbm}{\mathbf{B}}
\newcommand{\alphar}{\texttt{\large $\boldsymbol{\alpha}$}}
\title[The set-indexed L\'evy process]{The set-indexed L\'evy process: \\Stationarity, Markov and sample paths properties}

\author{Erick Herbin}
\address{Ecole Centrale Paris, Grande Voie des Vignes, 92295 Chatenay-Malabry, France} \email{erick.herbin@ecp.fr}
\author{Ely Merzbach}
\address{Dept. of Mathematics,
Bar Ilan University, 52900 Ramat-Gan, Israel}\email{merzbach@macs.biu.ac.il}

\subjclass[2000]{60\,G\,10, 60\,G\,15, 60\,G\,17, 60\,G\,18, 60\,G\,51, 60\,G\,60}
\keywords{Compound Poisson process, increment stationarity, infinitely divisible distribution, L\'evy-It\^o decomposition, L\'evy processes, Markov processes, random field, independently scattered random measures, set-indexed processes.}

\begin{abstract}
We present a satisfactory definition of the important class of L\'evy processes indexed by a general collection of sets. We use a new definition for increment stationarity of set-indexed processes to obtain different characterizations of this class. As an example, the set-indexed compound Poisson process is introduced.
The set-indexed L\'evy process is characterized by infinitely divisible laws and a L\'evy-Khintchine representation.
Moreover, the following concepts are discussed: projections on flows, Markov properties, and pointwise continuity. Finally the study of sample paths leads to a L\'evy-It\^o decomposition.  As a corollary, the semimartingale property is proved.
\end{abstract}

\maketitle

\section{Introduction}

L\'evy processes constitute a very natural and a fundamental class of stochastic processes, including Brownian motion, Poisson processes and stable processes. On the other hand, set-indexed processes like the set-indexed Brownian motion (also called the white noise) and the spatial Poisson process are very important in several fields of applied probability and spatial statistics. As a general extension of these processes, the aim of this paper is to present a satisfactory definition of the notion of set-indexed L\'evy processes and to study its properties.

More precisely, the processes studied are indexed by a quite general class $\mathcal{A}$ of closed subsets in a measure space $(\mathcal{T} ,m)$. Our definition of L\'evy processes is sufficiently broad to include the set-indexed Brownian motion, the spatial Poisson process, spatial compound Poisson processes and some other stable processes. 
In the case that $\mathcal{T}$ is the $N$-dimensional rectangle $[0 ,1]^N$ and $m$ is the Lebesgue measure, a similar definition was given and studied by Vares \cite{vares}, by Bass and Pyke  \cite{BP}  and by Adler and Feigin \cite{AF}. However, in our framework the parameter set is more general, the $2^N$ quadrants associated with any point do not exist, and we do not assume artificial hypothesis. 
As it will be shown later, no group structure is needed in order to define the increment stationarity property for L\'evy processes.

As motivation, notice that our setting includes at least two other interesting cases. The first one is still the Euclidean space, but instead of considering rectangles, we can consider more general sets like the class of ``lower layer" sets. 
The second case occurs when the space is a tree and we obtain L\'evy processes indexed by the branches of the tree. We refer to \cite{WI} and \cite{T} for applications in environmental sciences and cell biology of some kinds of L\'evy processes indexed by subsets of the Euclidean space $\mathbf{R}^N$.

A related concept is the notion of infinitely divisible and independently scattered random measure (infinitely divisible ISRM, for short). The usual reference on the subject is the work of Rajput and Rosinski (\cite{RR89}, see also other references mentioned therein). 
In their paper, integral representations of stochastic processes $X$ indexed by an abstract space are studied. Their goal   is to consider representations $X_t=\int_{\mathcal{T}}f_t\ d\Lambda$ and to derive distributional and sample paths properties of $X$ from the deterministic function $f_t$ and the integrator ("noise") $\Lambda$, defined as an independently scattered random measure on a $\delta$-ring of subsets in $\mathcal{T}$. 
In contrast to this previous work, our goal is not to reduce set-indexed processes to random measures, but to endow the indexing collection with a structure allowing the
study of standard issues of stochastic processes general theory, such as martingale, Markov and sample paths properties.

More precisely, for any $\mathcal{A}$-indexed process $X=\{X_U;\;U\in\mathcal{A}\}$, we define the {\em increment process} $\Delta X$, indexed by the collection of elements $C=U_0\setminus\bigcup_{i=1}^n U_i$ with $U_i\in\mathcal{A}$ for all $i=0,\dots,n$, by $\Delta X_C = X_{U_0} -\Delta X_{U_0 \cap \bigcup U_i}$, where $\Delta X_{U_0\cap \bigcup U_i}$ is given by the {\em inclusion-exclusion} formula (see the next section). 
If the process $X$ is a set-indexed L\'evy process (Definition 3.1), then one can show that $\Delta X$ is an infinitely divisible ISRM (Theorem 4.3). 
More than the random measure point of view, our paper focuses on the increment stationarity property, which can be investigated thanks to the general framework of indexing collection, and  on the study of Markov properties, projections on increasing flows, sample paths and martingale properties for set-indexed processes  $X=\{X_U;\;U\in\mathcal{A}\}$, presented in Sections 5, 6 and 7.

In order to extend the definition of classical L\'evy process to set-indexed L\'evy process, we need the concepts of increments independence, continuity in law and stationarity of increments. The first two properties can be trivially extended to the set-indexed framework and these processes have been considered in the Euclidean space $\R^N_+$ by several authors: Adler et al. \cite{AMSW}, Adler and Feigin \cite{AF}, as well as Bass and Pyke \cite{BP} studied this type of processes, adding a measure continuity property. In \cite{Ba02}, Balan considers set-indexed processes, introducing a property of monotone continuity in probability. However the concept of increment stationarity cannot be easily extended in the set-indexed framework; so this notion was ignored by most of the authors (except in \cite{BP} in which a kind of measure stationarity is implicitely assumed). Their definitions of L\'evy processes restricted to the one-parameter case are called today additive processes. In our definition of a set-indexed L\'evy process, we require a stationarity property and it plays a fundamental role. In particular, we will prove that a set-indexed process such that its projection on every increasing path is a real-parameter L\'evy process is a set-indexed L\'evy process. Under some conditions, the converse holds too.

Among the different possibilities, is there a natural definition of stationarity increments? The key to the answer can be found in the fractional Brownian motion theory. 
In \cite{ehem} and \cite{flow}, we defined and characterized the set-indexed fractional Brownian motion on the space $(\mathcal{T},\mathcal{A},m)$. Giving precise definitions for self-similarity and increment stationarity of set-indexed processes, as in the one-parameter  case, it was proved that the set-indexed fractional Brownian motion is the only set-indexed Gaussian process which is self-similar and has $m$-stationary $\mathcal{C}_0$-increments (will be defined in the next section).
An important justification to our definition for increment stationarity is that its projection on any flow (that is an increasing function from a positive interval into $\mathcal{A}$) leads to the usual definition for increment stationary one-parameter process. More precisely, if $X$ is a $\mathcal{C}_0$-increments $m$-stationary process and $f:[a,b]\rightarrow\mathcal{A}$ a flow, then the $m$-standard projection $X^{m,f}$ of $X$ on $f$ has stationary increments:
\begin{equation}\label{stat1p}
\forall h\in\mathbf{R}_+;\quad
\left\{X^{m,f}_{t+h}-X^{m,f}_h;\;t\in [a-h,b-h]\right\} \stackrel{(d)}{=}
\left\{X^{m,f}_{t}-X^{m,f}_0;\;t\in [a,b]\right\}.
\end{equation}

This satisfactory definition of stationarity opens the door to our new definition of L\'evy process.

It is important to emphasize that the study of set-indexed L\'evy processes is not a simple extension of the classical L\'evy process. Some of the specific properties of the set-indexed L\'evy process lead to a better understanding of fundamental properties of stochastic processes; for example the measure-based definition for increment stationarity or the analysis of sample paths regularity giving rise to different types of discontinuity like point mass jumps.

In the next section, we present the general framework of set-indexed processes, and we study the basic notions of independence and stationarity in the set-indexed theory. 
In Section 3, we give the definition of set-indexed L\'evy processes and discuss some simple examples such as the set-indexed compound Poisson process. We compare our definition with the concept of ISRM. 
In Section 4, we discuss links with infinitely divisible distributions and prove the L\'evy-Khintchine representation formula. Moreover, we prove the equivalence between set-indexed L\'evy processes on the indexing collection $\mathcal{A}$ and infinitely divisible ISRM on the collection of elements $C=U_0\setminus\bigcup_{i=1}^n U_i$ with $U_i\in\mathcal{A}$ for all $i=0,\dots,n$. 
Section 5 is devoted to projections on flows. We present some characterizations of the set-indexed L\'evy process by its projections on all the different flows. 
Markov properties are the object of Section 6. It is shown that a set-indexed L\'evy process is a Markov process and conversely for any homogeneous transition system, there exists a Markov process with this transition system which is a set-indexed L\'evy process. 
Finally, sample paths properties of the set-indexed L\'evy process are analysed in the last Section 7. Pointwise continuity is defined, and we prove that the sample paths of any set-indexed L\'evy process with Gaussian increments are almost surely pointwise continuous. We obtain a L\'evy-It\^o decomposition and therefore another characterization of the set-indexed L\'evy process as the sum of a local strong martingale and a Radon measure process is proved.

\section{Framework and set-indexed increment stationarity}\label{sec:frame}

We follow \cite{IvMe00} and \cite{ehem} for the framework and notations. Our processes are indexed
by an {\em indexing collection} $\mathcal{A}$ of compact subsets of a locally compact
metric space $\mathcal{T}$ equipped with a Radon measure $m$ (denoted $(\mathcal{T},m)$).

In   the entire paper, for any class $\mathcal{D}$ of subsets of
$\mathcal{T}$, $\mathcal D(u)$ denotes the class of finite unions
of elements of $\mathcal D$.

\begin{definition}[Indexing collection]\label{basic}
A nonempty class $\mathcal{A}$ of compact, connected subsets of $\mathcal{T}$ is
called an {\em indexing collection}
if it satisfies the following:
\begin{enumerate}
 \item\label{IC:C1} $\emptyset\in\mathcal{A}$, and $A^{\circ}\neq A$
if $A\notin\left\{ \emptyset, \mathcal{T} \right\}$.
In addition, there exists an increasing sequence
$\left(B_n\right)_{n\in\mathbf{N}}$ of sets in $\mathcal{A}(u)$ such that
$\mathcal{T}=\bigcup_{n\in\mathbf{N}}B^{\circ}_n$.

\item $\mathcal{A}$ is closed under arbitrary intersections and if
$A,B\in\mathcal{A}$ are nonempty, then $A\cap B$ is nonempty.
If $(A_i)_{i\in\N}$ is an increasing sequence in $\mathcal{A}$ and if there exists
$n\in\mathbf{N}$ such that $A_i\subseteq B_n$ for all $i$,
then $\overline{\bigcup_{i\in\N} A_i}\in\mathcal{A}$.

\item The $\sigma$-algebra generated by $\mathcal{A}$, $\sigma (\mathcal{A})=\mathcal{B}$,
the collection of all Borel sets of $\mathcal{T}$.

\item {\em Separability from above}\\
There exists an increasing sequence of finite subclasses
$\mathcal{A}_n=\{A_1^n,...,A_{k_n}^n\}$ of $\mathcal{A}$ closed under intersections
and satisfying $\emptyset, B_n \in\mathcal{A}_n(u)$
and a sequence of functions
$g_n:\mathcal{A}\rightarrow\mathcal{A}_n(u)\cup\left\{\mathcal{T}\right\}$
satisfying
\begin{enumerate}
\item $g_n$ preserves arbitrary intersections and finite unions \\
(i.e. $g_n(\bigcap_{A\in\mathcal{A}'}A )=\bigcap_{A\in\mathcal{A}'}g_n(A )$
for any $\mathcal{A}'\subseteq\mathcal{A}$, and \\
if $\bigcup_{i=1}^kA_i=\bigcup_{j=1}^mA_j'$, then
$\bigcup_{i=1}^kg_n(A_i)=\bigcup_{j=1}^mg_n(A_j')$);

\item for any $A\in\mathcal{A}$, $A\subseteq (g_n(A))^{\circ}$ for all $n$;

\item $g_n(A)\subseteq g_m(A)$ if $n\geq m$;

\item for any $A\in\mathcal{A}$, $A=\bigcap_n g_n(A)$;

\item if $A,A'\in\mathcal{A}$ then for every $n$, $g_n(A) \cap A' \in\mathcal{A}$, and if
$A'\in\mathcal{A} _n$ then $g_n(A) \cap A' \in\mathcal{A} _n$;
\item   $g_n(\emptyset)=\emptyset$, for all $n\in\N$.
\end{enumerate}

\item Every countable intersection of sets in $\mathcal{A}(u)$ may be expressed as
the closure of a countable union of sets in $\mathcal{A}$.

\end{enumerate}
(Note: ` $\subset$' indicates strict inclusion; `$\overline{(\cdot)}$'
and `$(\cdot )^{\circ}$' denote respectively the closure and the interior of a set.)
\end{definition}

Let $\emptyset'=\bigcap_{U\in\mathcal{A}\setminus\{\emptyset\}} U$ be the minimal set in $\mathcal{A}$ ($\emptyset'\ne\emptyset$). The role played by $\emptyset'$ is similar to the role played by $0$ in the classical theory. We assume that $m(\emptyset')=0$.

\begin{example}
There are many examples of indexing collection that have already been deeply studied (cf. \cite{Ivanoff}, \cite{flow}). Let us mention
\begin{itemize}
\item rectangles of $\mathbf{R}^N_+$:
$\mathcal{A}=\left\{[0,t];t\in\mathbf{R}^N_+\right\}\cup\left\{\emptyset\right\}$.
In that case, any $\mathcal{A}$-indexed process can be seen as a
$N$-parameter process and conversely. \item arcs of the unit
circle $\mathbb{S}_1$ in $\mathbf{R}^2$:
$\mathcal{A}=\Big\{\overset{\curvearrowright}{0M};
M\in\mathbb{S}_1\Big\}\cup\left\{\emptyset\right\}$. In that case,
any $\mathcal{A}$-indexed process can be seen as a process indexed
by points on the circle. \item lower layers in $\mathbf{R}^N_+$:
$\mathcal{A}$ is the set of $U\subset\mathbf{R}^N_+$ such that
$t\in U \Rightarrow [0,t]\subseteq U$.
\end{itemize}
\end{example}

\

The difficulty to give a good definition for set-indexed L\'evy processes is related to increment stationarity. In this paper, we use the same definition as in \cite{flow}, and for this purpose we need first to extend the collection $\mathcal{A}$ to the following collections:
\begin{itemize}
\item The class $\mathcal{C}$ is defined as the collection of
elements $U_0\setminus \bigcup_{i=1}^n U_i$ where $U_i\in
\mathcal{A}$ for all $i=0,\dots,n$.

\item The class $\mathcal{C}_0$ is defined as the sub-class of $\mathcal{C}$ of elements $U\setminus V$ where $U,V\in\mathcal{A}$.
\end{itemize}

\noindent Since $\emptyset$ belongs to $\mathcal{A}$, we have the inclusion $\mathcal{A}\subset\mathcal{C}_0\subset\mathcal{C}$.

\begin{lemma}\label{lemstructC}
Every element $C\in\mathcal{C}$ admits a representation $C=U\setminus V$ where $U=\inf\{W\in\mathcal{A}: C\subset W\}$ and $V=\sup\{W\in\mathcal{A}(u): W\subset U\textrm{ and }W\cap C=\emptyset\}$.
\end{lemma}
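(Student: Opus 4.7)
The plan is to start from any representation $C=U_0\setminus\bigcup_{i=1}^n U_i$ with $U_i\in\mathcal{A}$ and to exhibit $U$ and $V$ concretely, showing that the infimum and supremum in the statement are in fact attained.

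\textbf{Step 1: constructing $U$.} The family $\mathcal{F}=\{W\in\mathcal{A}:C\subseteq W\}$ is nonempty because it contains $U_0$. By axiom~(2) of Definition~\ref{basic}, $\mathcal{A}$ is stable under arbitrary intersections, so
\[
U\;:=\;\bigcap_{W\in\mathcal{F}}W\;\in\;\mathcal{A},
\]
and $U$ is by construction the minimum of $\mathcal{F}$; in particular $C\subseteq U\subseteq U_0$ and $U=\inf\{W\in\mathcal{A}:C\subseteq W\}$.

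\textbf{Step 2: rewriting $C$ with $U$.} Set $V_0:=\bigcup_{i=1}^n(U\cap U_i)$. Each $U\cap U_i$ belongs to $\mathcal{A}$ by the intersection-stability of $\mathcal{A}$, so $V_0\in\mathcal{A}(u)$ by the very definition of the $(u)$-notation. I then check set-theoretically that $C=U\setminus V_0$: for $x\in U\subseteq U_0$, one has $x\in C\iff x\notin\bigcup_{i=1}^n U_i\iff x\notin V_0$. Moreover $V_0\subseteq U$ and $V_0\cap C=\emptyset$, so $V_0$ lies in the family
\[
\mathcal{G}\;:=\;\{W\in\mathcal{A}(u):W\subseteq U\text{ and }W\cap C=\emptyset\}.
\]

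\textbf{Step 3: $V_0$ is the maximum of $\mathcal{G}$.} Any $W\in\mathcal{G}$ satisfies $W\subseteq U$ and $W\cap C=\emptyset$, hence $W\subseteq U\setminus C=V_0$. Combined with $V_0\in\mathcal{G}$ from Step~2, this shows that $V_0$ is the (unique) maximal element of $\mathcal{G}$, so $V:=V_0=\sup\mathcal{G}$. Putting Steps~1--3 together gives the desired representation $C=U\setminus V$.

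The only delicate point is the existence of $U$: it rests entirely on the closure of $\mathcal{A}$ under arbitrary intersections granted by axiom~(2); without this, the infimum would not be guaranteed to lie in $\mathcal{A}$. Once $U$ is in hand, both the identity $C=U\setminus V_0$ and the maximality of $V_0$ reduce to elementary set manipulations, and the supremum defining $V$ is automatically attained.
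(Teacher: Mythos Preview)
Your proof is correct and follows essentially the same approach as the paper: obtain $U$ as the intersection of all $W\in\mathcal{A}$ containing $C$ (using closure of $\mathcal{A}$ under arbitrary intersections), then identify $V$ as the complement $U\setminus C$ realized concretely via the $U_i$'s. Your version is in fact slightly tidier: by taking $V_0=\bigcup_i(U\cap U_i)$ with $U$ already equal to the infimum, you ensure $V_0\subseteq U$ from the outset and the maximality in $\mathcal{G}$ follows in one line from $U\setminus C=V_0$, whereas the paper works with the original $U_i$'s and argues the supremum relative to the initial representing set rather than the infimum.
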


\begin{proof}
Consider an element $C=U\setminus\bigcup_i U_i$ of $\mathcal{C}$ (with $U_i\subset U$ for all $i$) and $\tilde{U}=\inf\{V\in\mathcal{A}: C\subset V\}$. The set $\tilde{U}$ is the intersection of all $V\in\mathcal{A}$ such that $C\subset V$. Therefore, we have $C\subset\tilde{U}$.
By definition, we have $C\subset U$ and $\tilde{U}\subset U$.

Assume the existence of $x\in U\setminus\tilde{U}$. We have necessarily $x\notin C$ and then $x\in\bigcup_i U_i$. This shows
\begin{equation*}
U=\tilde{U} \cup \bigcup_i U_i
\end{equation*}
and then
\begin{equation*}
C = U\setminus\bigcup_i U_i = \tilde{U}\setminus\bigcup_i U_i.
\end{equation*}

It remains to prove that $\bigcup_i U_i = \sup\{W\in\mathcal{A}(u): W\cap C=\emptyset\}$. Let us define
\begin{equation*}
V=\bigcup_{\genfrac{}{}{0pt}{}{W\in\mathcal{A}(u),W\subset U}{W\cap C=\emptyset}} W.
\end{equation*}
It is clear that $\bigcup_i U_i\subset V$. For all $x\in V$, $x$
belongs to some $W\in\mathcal{A}(u)$ where $W\subset U$ and $W\cap
C=\emptyset$. Consequently, $x\in U$ and $x\notin C$. Then
$x\in\bigcup_i U_i$. This shows that $V=\bigcup_i U_i$.
\end{proof}

From any $\mathcal{A}$-indexed process $X= \{X_U;\;
U\in\mathcal{A}\}$, we define the {\em increment process} $\Delta
X = \{ \Delta X_C;\;C\in\mathcal{C} \}$ by $\Delta X_C = X_{U_0} -
\Delta X_{U_0 \cap \bigcup U_i}$ for all $C=U_0\setminus
\bigcup_{i=1}^n U_i$, where $\Delta X_{U_0 \cap \bigcup U_i}$ is
given by the {\em inclusion-exclusion} formula
\begin{equation*}
\Delta X_{U_0 \cap \bigcup U_i} = \sum_{i=1}^n \sum_{j_1<\dots<j_i} (-1)^{i-1} X_{U_0 \cap U_{j_1} \cap \dots \cap U_{j_i}}.
\end{equation*}
When $C=U\setminus V\in\mathcal{C}_0$, the expression of $\Delta X_C$ reduces to $\Delta X_C = X_U - X_{U\cap V}$.

The existence of the increment process $\Delta X$ assumes that the
value $\Delta X_C$ does not depend on the representation of $C$
and $X_{\emptyset}=\Delta X_{\emptyset} = 0$.

\

An $\mathcal{A}$-indexed process $X= \{X_U;\; U\in\mathcal{A}\}$
is said to have $m$-stationary $\mathcal{C}_0$-increments if for
all integer $n$, all $V\in\mathcal{A}$ and for all increasing
sequences $(U_i)_i$ and $(A_i)_i$ in $\mathcal{A}$, we have
\begin{equation*}
\left[ \forall i,\; m(U_i\setminus V)=m(A_i) \right]
\Rightarrow
(\Delta X_{U_1\setminus V},\dots,\Delta X_{U_n\setminus V}) \stackrel{(d)}{=}
(\Delta X_{A_1},\dots,\Delta X_{A_n}).
\end{equation*}

This definition of increment stationarity for a set-indexed process is the natural extension of increment stationarity for one-dimensional processes. It can be seen as the characteristic of a set-indexed process whose projection on any flow has stationary increments, in the usual sense for one-parameter processes (see \cite{flow}).

\

In the real-parameter setting, independence of increments allows to reduce the increment stationarity property to a simpler statement with only two increments. The following result shows that this fact remains true for set-indexed processes and that the definition of stationarity in \cite{flow} is equivalent to $\mathcal{C}_0$-increment stationarity in the previous sense of \cite{ehem}.
Consequently, Condition (i.) of Lemma \ref{lemequivstat} can be considered for increment stationarity of set-indexed L\'evy processes.

\begin{lemma}\label{lemequivstat}
Let $X=\left\{X_U;\;U\in\mathcal{A}\right\}$ be a set-indexed process satisfying the following property:
\begin{quote}
For all $C_1=U_1\setminus V_1,\dots, C_n=U_n\setminus V_n$ in $\mathcal{C}_0$ such that
\begin{align*}
\forall i=1,\dots,n;\quad &V_i\subset U_i\\
\forall i=1,\dots,n-1;\quad & U_i\subset V_{i+1},
\end{align*}
the random variables $\Delta X_{C_1},\dots,\Delta X_{C_n}$ are independent.
\end{quote}
\

\noindent Then the two following assertions are equivalent:
\begin{enumerate}[(i.)]
\item For all $C_1=U_1\setminus V_1$ and $C_2=U_2\setminus V_2$ in $\mathcal{C}_0$, we have
\begin{equation*}
m(U_1\setminus V_1)=m(U_2\setminus V_2)
\Rightarrow
\Delta X_{U_1\setminus V_1} \stackrel{(d)}{=}
\Delta X_{U_2\setminus V_2}
\end{equation*}

\item $X$ has $m$-stationary $\mathcal{C}_0$-increments, i.e. for all integer $n$, all $V\in\mathcal{A}$ and for all increasing sequences $(U_i)_i$ and $(A_i)_i$ in $\mathcal{A}$, we have
\begin{equation*}
\left[ \forall i,\; m(U_i\setminus V)=m(A_i) \right]
\Rightarrow
(\Delta X_{U_1\setminus V},\dots,\Delta X_{U_n\setminus V}) \stackrel{(d)}{=}
(\Delta X_{A_1},\dots,\Delta X_{A_n}).
\end{equation*}

\end{enumerate}
\end{lemma}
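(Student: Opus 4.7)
The direction (ii)$\Rightarrow$(i) is direct: specializing (ii) to $n=1$ against a common reference element $A\in\mathcal{A}$ satisfying $m(A)=m(C_1)=m(C_2)$, applied successively to $(V_1,U_1)$ and $(V_2,U_2)$, yields the chain of equalities in distribution $\Delta X_{C_1}\stackrel{(d)}{=}X_A\stackrel{(d)}{=}\Delta X_{C_2}$.

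The substance of the lemma lies in (i)$\Rightarrow$(ii). My plan is to write both vectors appearing in (ii) as cumulative sums of $\Delta X$ over disjoint $\mathcal{C}_0$-annuli that satisfy the ordering hypothesis of the independence assumption, use (i) to match the one-dimensional marginals annulus by annulus, and then transfer the equality in distribution from the individual annuli to the partial sums via independence. I shall assume throughout $V\subseteq U_1$ (and hence $V\subseteq U_i$ for every $i$); the general case reduces to this because $\Delta X_{U_i\setminus V}=X_{U_i}-X_{U_i\cap V}$ depends on $V$ only through its traces on the $U_i$'s.

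Introduce the two telescoping chains
\begin{equation*}
C^0_1=U_1\setminus V,\;\; C^0_j=U_j\setminus U_{j-1}\;(2\leq j\leq n);\qquad D^0_1=A_1\setminus\emptyset,\;\; D^0_j=A_j\setminus A_{j-1}\;(2\leq j\leq n),
\end{equation*}
all of which belong to $\mathcal{C}_0$. The nested inclusions $V\subseteq U_1\subseteq\dots\subseteq U_n$ and $\emptyset\subseteq A_1\subseteq\dots\subseteq A_n$ make each chain meet the condition $V_i\subseteq U_i\subseteq V_{i+1}$ of the independence hypothesis, so the vectors $(\Delta X_{C^0_j})_{j=1}^n$ and $(\Delta X_{D^0_j})_{j=1}^n$ each have mutually independent components. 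A short additivity computation shows $m(C^0_j)=m(D^0_j)$ for every $j$: for $j=1$, $m(U_1\setminus V)=m(A_1)$ by hypothesis, and for $j\geq 2$,
\begin{equation*}
m(C^0_j)=m(U_j)-m(U_{j-1})=m(U_j\setminus V)-m(U_{j-1}\setminus V)=m(A_j)-m(A_{j-1})=m(D^0_j).
\end{equation*}
Applying (i) marginally then gives $\Delta X_{C^0_j}\stackrel{(d)}{=}\Delta X_{D^0_j}$, and independence within each chain upgrades this to
\begin{equation*}
(\Delta X_{C^0_1},\dots,\Delta X_{C^0_n})\stackrel{(d)}{=}(\Delta X_{D^0_1},\dots,\Delta X_{D^0_n}).
\end{equation*}

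It then suffices to apply the deterministic linear cumulative-sum map $(z_1,\dots,z_n)\mapsto\bigl(\sum_{j=1}^{k}z_j\bigr)_{k=1}^{n}$ to both sides, since the telescopings
\begin{equation*}
\sum_{j=1}^{k}\Delta X_{C^0_j}=X_{U_k}-X_V=\Delta X_{U_k\setminus V},\qquad \sum_{j=1}^{k}\Delta X_{D^0_j}=X_{A_k}=\Delta X_{A_k}
\end{equation*}
reproduce the two sides of (ii) exactly. I expect the most delicate point to be the verification of the ordering chain condition of the independence hypothesis: at the junctions $U_i\subseteq V_{i+1}=U_i$ strict inclusion would fail, so the hypothesis must be read with $\subseteq$ (which is natural, corresponding to adjacent annuli sharing a common boundary). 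A secondary technical point is the clean reduction of arbitrary $V$ to the case $V\subseteq U_1$.
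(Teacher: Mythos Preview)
Your proof is correct and follows essentially the same route as the paper: both decompose $\Delta X_{U_k\setminus V}$ and $\Delta X_{A_k}$ into telescoping sums over the annuli $U_j\setminus U_{j-1}$ (with $U_0=V$) and $A_j\setminus A_{j-1}$, invoke the independence hypothesis on these nested chains, and use (i) componentwise. The only cosmetic difference is that the paper carries this out through characteristic functions (rewriting $\sum_j\lambda_j\Delta X_{U_j\setminus V}$ as $\sum_j(\lambda_j+\cdots+\lambda_n)\Delta X_{U_j\setminus U_{j-1}}$ and factoring the expectation), whereas you argue directly that independent coordinates with matching marginals have the same joint law and then push forward under the cumulative-sum map; the two caveats you flag (reading $\subset$ as $\subseteq$ at the chain junctions, and the reduction to $V\subseteq U_1$) are present, and equally unaddressed, in the paper's own argument.
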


\begin{proof}
The implication $(ii.)\Rightarrow(i.)$ is obvious.

Conversely, assume that $(i.)$ holds and consider $V$, $(U_i)_i$ and $(A_i)_i$ as in $(ii.)$. The law of $(\Delta X_{U_1\setminus V},\dots,\Delta X_{U_n\setminus V})$ is determined by
$E\left[\exp\left(i\sum_{j=1}^n \lambda_j \Delta X_{U_j\setminus V} \right)\right]$, where $\lambda_1,\dots,\lambda_n\in\mathbf{R}$.

We can write
\begin{eqnarray*}
\left\{\begin{array}{l}
\Delta X_{U_2\setminus V} = \Delta X_{U_2\setminus U_1}+\Delta X_{U_1\setminus V}\\
\Delta X_{U_3\setminus V} = \Delta X_{U_3\setminus U_2}+\Delta X_{U_2\setminus U_1}+\Delta X_{U_1\setminus V}\\
\dots\\
\Delta X_{U_n\setminus V} = \Delta X_{U_n\setminus U_{n-1}}+\Delta X_{U_{n-1}\setminus U_{n-2}}+\dots+\Delta X_{U_1\setminus V}
\end{array}\right.
\end{eqnarray*}
which implies
\begin{align*}
E\left[\exp\left(i\sum_{j=1}^n \lambda_j \Delta X_{U_j\setminus V} \right)\right]
=E\left[\exp\left(i\sum_{j=1}^n(\lambda_j+\dots+\lambda_n)\Delta X_{U_j\setminus U_{j-1}}\right)\right]
\end{align*}
where $U_0=V$.
Using the independence of the r.v. $\Delta X_{U_j\setminus U_{j-1}}$, we get
\begin{align*}
E\left[\exp\left(i\sum_{j=1}^n \lambda_j \Delta X_{U_j\setminus V} \right)\right]
=\prod_{j=1}^n E\left[\exp\left(i(\lambda_j+\dots+\lambda_n)\Delta X_{U_j\setminus U_{j-1}}\right)\right].
\end{align*}
As the assertion $(i.)$ holds, we have for all $j=1,\dots,n$,
\begin{align*}
E\left[\exp\left(i(\lambda_j+\dots+\lambda_n)\Delta X_{U_j\setminus U_{j-1}}\right)\right]
=E\left[\exp\left(i(\lambda_j+\dots+\lambda_n)\Delta X_{A_j\setminus A_{j-1}}\right)\right]
\end{align*}
and then, by independence of the r.v. $\Delta X_{A_j\setminus A_{j-1}}$,
\begin{align}\label{eqequivstat}
E\left[\exp\left(i\sum_{j=1}^n \lambda_j \Delta X_{U_j\setminus V} \right)\right]
&=E\left[\exp\left(i\sum_{j=1}^n(\lambda_j+\dots+\lambda_n)\Delta X_{A_j\setminus A_{j-1}}\right)\right] \nonumber\\
&=E\left[\exp\left(i\sum_{j=1}^n \lambda_j \Delta X_{A_j} \right)\right].
\end{align}
{}From (\ref{eqequivstat}), the assertion $(ii.)$ is proved.
\end{proof}

\

\section{Definition and examples}\label{sec:def}

The independence of increments and the increment stationarity property discussed in the previous section allow to define the class of set-indexed L\'evy processes. It is shown in Example \ref{ex:Levy} that this class gathers together the classical set-indexed Brownian motion and the spatial Poisson process.

\begin{definition}\label{deflevy}
A set-indexed process $X=\left\{X_U;\;U\in\mathcal{A}\right\}$ with definite increments is called a {\em set-indexed L\'evy process} if the following conditions hold:

\begin{enumerate}
\item $X_{\emptyset'}=0$ almost surely.

\item the increments of $X$ are independent: for all pairwise disjoint $C_1,\dots,C_n$ in $\mathcal{C}$, the random variables $\Delta X_{C_1},\dots,\Delta X_{C_n}$ are independent.

\item $X$ has $m$-stationary $\mathcal{C}_0$-increments, i.e. for all integer $n$, all $V\in\mathcal{A}$ and for all increasing sequences $(U_i)_i$ and $(A_i)_i$ in $\mathcal{A}$, we have
\begin{equation*}
\left[ \forall i,\; m(U_i\setminus V)=m(A_i) \right]
\Rightarrow
(\Delta X_{U_1\setminus V},\dots,\Delta X_{U_n\setminus V}) \stackrel{(d)}{=}
(\Delta X_{A_1},\dots,\Delta X_{A_n}).
\end{equation*}

\item $X$ is continuous in probability: if $(U_n)_{n\in\N}$ is a sequence in $\mathcal{A}$ such that
\begin{equation}\label{eq:cond4}
\overline{\bigcup_{n} \bigcap_{k\geq n} U_k} = \bigcap_n
\overline{\bigcup_{k\geq n} U_k} = A \in\mathcal{A},
\end{equation}
then
\begin{equation*}
\lim_{n\rightarrow\infty} P\left\{ |X_{U_n} - X_A| > \epsilon \right\} = 0.
\end{equation*}

\end{enumerate}
\end{definition}


Our definition of probability continuity is stronger than the definition given in \cite{Ba02}, in which only monotone continuity in probability is required. In fact our definition is very natural and is closed to the so-called Painlev\'e-Kuratowski topology, which is itself equivalent to the Fell topology for closed sets (see \cite{Molchanov} for details).

\begin{remark}
The condition $(2)$ is equivalent to:
for all pairwise disjoints $C_1,\dots,C_n$ in $\mathcal{C}(u)$, the random variables $\Delta X_{C_1},\dots,\Delta X_{C_n}$ are independent.
\end{remark}

As a corollary to Lemma \ref{lemequivstat}, we can state the equivalent following definition for set-indexed L\'evy processes:

\begin{proposition}\label{defLevyequiv}
A set-indexed process $X=\left\{X_U;\;U\in\mathcal{A}\right\}$ with definite increments is called a {\em set-indexed L\'evy process} if the following conditions hold

\begin{enumerate}
\item $X_{\emptyset'}=0$ almost surely.

\item for all pairwise disjoint $C_1,\dots,C_n$ in $\mathcal{C}$, the random variables $\Delta X_{C_1},\dots,\Delta X_{C_n}$ are independent.

\item[(3')] for all $C_1=U_1\setminus V_1$ and $C_2=U_2\setminus V_2$ in $\mathcal{C}_0$, we have
\begin{equation*}
m(U_1\setminus V_1)=m(U_2\setminus V_2)
\Rightarrow
\Delta X_{U_1\setminus V_1} \stackrel{(d)}{=}
\Delta X_{U_2\setminus V_2}.
\end{equation*}

\item[(4)] $X$ is continuous in probability: if $(U_n)_{n\in\N}$ is a sequence in $\mathcal{A}$ such that
\begin{equation*}
\overline{\bigcup_{n} \bigcap_{k\geq n} U_k} = \bigcap_n \overline{\bigcup_{k\geq n} U_k} = A \in\mathcal{A}
\end{equation*}
then
\begin{equation*}
\lim_{n\rightarrow\infty} P\left\{ |X_{U_n} - X_A| > \epsilon \right\} = 0.
\end{equation*}

\end{enumerate}
\end{proposition}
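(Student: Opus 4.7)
The plan is to apply Lemma \ref{lemequivstat} directly, since the proposition differs from Definition \ref{deflevy} only in replacing condition $(3)$ by condition $(3')$, while conditions $(1)$, $(2)$, $(4)$ are literally the same. So the entire content reduces to showing that, under $(1)$, $(2)$, $(4)$, the two stationarity statements $(3)$ and $(3')$ are equivalent.

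First I would verify that the independence hypothesis of Lemma \ref{lemequivstat} is implied by condition $(2)$. Given a chain $C_1=U_1\setminus V_1,\dots, C_n=U_n\setminus V_n$ in $\mathcal{C}_0$ with $V_i\subset U_i$ and $U_i\subset V_{i+1}$, I would check that the $C_i$ are pairwise disjoint: for $i<j$ one has $C_i\subset U_i\subset V_j$, whereas $C_j=U_j\setminus V_j$ is by construction disjoint from $V_j$, so $C_i\cap C_j=\emptyset$. Since $\mathcal{C}_0\subset\mathcal{C}$, condition $(2)$ of Definition \ref{deflevy} then yields independence of $\Delta X_{C_1},\dots,\Delta X_{C_n}$, which is precisely the hypothesis needed to invoke Lemma \ref{lemequivstat}.

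With that hypothesis in force, the equivalence of $(3)$ (which is exactly condition $(ii.)$ of Lemma \ref{lemequivstat}) and $(3')$ (which is exactly condition $(i.)$) is the content of that lemma: the direction $(3)\Rightarrow(3')$ is the trivial implication $(ii.)\Rightarrow(i.)$ noted there, and $(3')\Rightarrow(3)$ is the substantive part, already established via the characteristic-function factorisation in (\ref{eqequivstat}).

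I do not expect any real obstacle: the proposition is essentially a corollary of Lemma \ref{lemequivstat}, the only non-mechanical step being the pairwise-disjointness observation that lets condition $(2)$ furnish the independence assumption of the lemma. Once that verification is made, no further computation is needed.
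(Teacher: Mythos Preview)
Your proposal is correct and follows exactly the approach the paper takes: the proposition is stated there explicitly as a corollary of Lemma~\ref{lemequivstat}, with no further proof given. Your additional verification that condition~(2) supplies the independence hypothesis of the lemma (via the pairwise-disjointness of a chain in $\mathcal{C}_0$) is a welcome bit of care that the paper leaves implicit.
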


Following the formalism of Rajput and Rosinski (\cite{RR89}), since the collection $\mathcal{C}$ constitutes a $\delta$-ring of subsets of $\mathcal{T}$, i.e. closed under countable intersections, the natural question arising from Definition \ref{deflevy} is: Is the increment process $\{\Delta X_C;\;C\in\mathcal{C}\}$ of a set-indexed L\'evy process $\{X_U;\;U\in\mathcal{A}\}$ an {\em independently scattered random measure (ISRM)} (or equivalently, a {\em stochastic completely additive set function} using the terminology of Pr\'ekopa (\cite{Prekopa-I}))  on $\mathcal{C}$?

\begin{itemize}
\item From Condition (2) of Definition \ref{deflevy}, for any sequence $(C_n)_{n\in\N}$ of disjoint sets in $\mathcal{C}$, the random variables $\left(\Delta X_{C_n}\right)_{n\in\N}$ are independant (using the fact that the law of $\left(\Delta X_{C_n}\right)_{n\in\N}$ is generated by its finite-dimensional distributions).
\item The existence of the increment process $\{\Delta X_C;\;C\in\mathcal{C}\}$ is equivalent to the finite additivity of $\Delta X$.
Moreover, Condition (4) of Definition \ref{deflevy} implies that if $(C_n)_{n\in\N}$ is a non-increasing sequence in $\mathcal{C}$ such that $\lim_{n\rightarrow\infty}C_n = \emptyset$, $\Delta X_{C_n}$ converges to $0$ in probability. The monotony of $(C_n)_{n\in\N}$ allows to conclude that the convergence is almost sure.
From this, if $(C_n)_{n\in\N}$ is any sequence in $\mathcal{C}$ such that $\bigcup_{n\in\N}C_n\in\mathcal{C}$, then
$$ \Delta X_{\bigcup_{n\in\N}C_n} = \sum_{n\in\N} \Delta X_{C_n} \quad\textrm{a.s.}$$
where the sum converges almost surely.
\end{itemize}

\noindent From these two points, we can claim that the increment process $\Delta X$ of any set-indexed L\'evy process is an independently scattered random measure on $\mathcal{C}$.

\noindent However, we emphasize the importance of the stationarity property in the definition of set-indexed L\'evy processes. The sub-class $\mathcal{C}_0$ of $\mathcal{C}$ plays a fondamental role, since it is known that $m$-stationarity of $\mathcal{C}_0$-increments is equivalent to: the projections of the set-indexed process on any flow is a real-parameter process with stationary increments in the usual sense (see\cite{flow}). In the next section, we prove that this property and the continuity in probability imply infinite divisibility and allows to determine explicitely the finite-dimensional distributions of set-indexed L\'evy processes.

\noindent Moreover, instead of reducing the SI L\'evy processes to a random measure on $\mathcal{C}$, we consider the structure of the indexing collection $\mathcal{A}$ in order to define filtrations indexed by $\mathcal{A}$ and to study usual distributional properties of stochastic processes, such as Markov and martingale properties. 

\begin{example}\label{ex:Levy}
Several set-indexed processes that have been extensively studied (cf. \cite{Adler, BP, C}, \dots) satisfy Definition \ref{deflevy} of set-indexed L\'evy processes:
\begin{itemize}
\item {\em Deterministic process:} A process $X=\{X_U;\;U\in\mathcal{A}\}$ such that for all $U\in\mathcal{A}$, $X_U=c.m(U)$ for some constant $c\in\R$;

\item {\em Set-indexed Brownian motion:}
A mean-zero Gaussian set-indexed process $B=\{B_U;\;U\in\mathcal{A}\}$ such that
\begin{equation*}
\forall U,V\in\mathcal{A};\quad E[B_U B_V] = m(U\cap V)
\end{equation*}
where $m$ denote the measure of the space $(\mathcal{T},\mathcal{A},m)$.

The fact that $B_{\emptyset'}=0$ a.s. is a consequence of
$m(\emptyset')=0$. The condition (2) is well-known and the
increment stationarity follows from Proposition 5.2 in \cite{flow}
with $H=1/2$ ($B$ actually satisfies a stronger definition for
increment stationarity replacing the class $\mathcal{C}_0$ with
the Borel sets). For the stochastic continuity, consider a
sequence $(U_n)_{n\in\N}$ in $\mathcal{A}$ as in (\ref{eq:cond4}).
We have, for all $n\in\N$
\begin{align*}
E[B_{U_n}-B_A]^2 &= m(U_n) + m(A) - 2 m(U_n \cap A) \\
&= m(U_n \setminus A) + m(A \setminus U_n).
\end{align*}
Set $V_n = \bigcap_{k\geq n} U_k$ and $W_n = \bigcup_{k\geq n} U_k$ for all $n\in\N$.
The sequence $(V_n)_{n\in\N}$ is non-decreasing and $(W_n)_{n\in\N}$ is non-increasing.
We have the double inclusion, $V_n \subseteq U_n \subseteq W_n$ for all $n\in\N$, which leads to $U_n\setminus A \subseteq W_n\setminus A$ and $A\setminus U_n \subseteq A\setminus V_n$ for all $n\in\N$. By $\sigma$-additivity of the measure $m$, the quantities $m(A\setminus V_n)$ and $m(W_n\setminus A)$ tend to $0$ as $n$ goes to $\infty$. Therefore, $E[B_{U_n}-B_A]^2 \rightarrow 0$ as $n\rightarrow\infty$ and consequently $B_{U_n}$ converges to $B_A$ in probability.

\item {\em Set-indexed homogeneous Poisson process:}
A process $N=\{N_U;\;U\in\mathcal{A}\}$ with independent increments and such that for all $U\in\mathcal{A}$, $N_U$ has a Poisson distribution with parameter $c.m(U)$ (where $c>0$).\\
Following \cite{kingman,Ivanoff}, a Poisson process $N$ is equivalently defined by the representation
 $N_U=\sum_j\mathbbm{1}_{\{\tau_j\in U\}}$ for $U\in\mathcal{A}$ where the sequence $(\tau_j)_{j}$ of random points of
  $\mathcal{T}$ is measurable, and $(\tau_j)_{j}$ are uniformly distributed.

\begin{proposition}
The set-indexed homogeneous Poisson process is a set-indexed L\'evy process.
\end{proposition}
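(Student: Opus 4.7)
The plan is to verify each of the four conditions of Definition~\ref{deflevy} directly from the point-process representation $N_U=\sum_j\mathbbm{1}_{\{\tau_j\in U\}}$. First I would show, by expanding the inclusion-exclusion formula and using $\mathbbm{1}_{A\cap B}=\mathbbm{1}_A\mathbbm{1}_B$, that
\[
\Delta N_C=\sum_j\mathbbm{1}_{\{\tau_j\in C\}}\qquad\text{for every }C\in\mathcal{C}.
\]
This identity plays a double role: it guarantees that $\Delta N_C$ depends only on $C$ as a subset of $\mathcal{T}$ (so $N$ has definite increments) and shows that $\Delta N_C$ is Poisson distributed with parameter $c\,m(C)$. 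Condition~(1) follows at once, since $m(\emptyset')=0$ forces $N_{\emptyset'}\sim\mathrm{Poisson}(0)$ and hence $N_{\emptyset'}=0$ almost surely.

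For condition~(2) I would invoke the fundamental independence property of the underlying Poisson random measure of intensity $c\cdot m$: on pairwise disjoint Borel sets, the counts $\#\{j:\tau_j\in C_i\}$ are independent. Since any pairwise disjoint $C_1,\dots,C_n\in\mathcal{C}$ are Borel, the counting representation above yields independence of $\Delta N_{C_1},\dots,\Delta N_{C_n}$.

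For condition~(3) I would apply Lemma~\ref{lemequivstat}. Its independence hypothesis, concerning chains $C_i=U_i\setminus V_i$ with $U_i\subset V_{i+1}$, is a particular case of condition~(2), because such $C_i$ are automatically pairwise disjoint. It then remains only to verify assertion~$(i.)$: if $C_1,C_2\in\mathcal{C}_0$ satisfy $m(C_1)=m(C_2)$, the variables $\Delta N_{C_1}$ and $\Delta N_{C_2}$ are Poisson with the same parameter $c\,m(C_1)$, hence equal in distribution.

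Condition~(4) is the only point requiring a genuine estimate. Bounding
\[
|N_{U_n}-N_A|\leq N_{U_n\setminus A}+N_{A\setminus U_n},
\]
each summand is Poisson with expectation $c\,m(U_n\setminus A)$ and $c\,m(A\setminus U_n)$ respectively. Setting $V_n=\bigcap_{k\geq n}U_k$ and $W_n=\bigcup_{k\geq n}U_k$, the inclusions $U_n\setminus A\subseteq\overline{W_n}\setminus A$ and $A\setminus U_n\subseteq A\setminus V_n$, combined with the hypothesis $\overline{\bigcup_n V_n}=\bigcap_n\overline{W_n}=A$ and the $\sigma$-additivity of the Radon measure $m$, force $m(U_n\triangle A)\to 0$. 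Therefore $N_{U_n}\to N_A$ in $L^1$ and \emph{a fortiori} in probability. The main technical hurdle is the opening inclusion-exclusion identity for $\Delta N_C$; once that is settled, the remaining verifications reduce to standard facts on Poisson laws and a measure-continuity argument entirely parallel to the Brownian case treated in Example~\ref{ex:Levy}.
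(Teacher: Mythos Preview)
Your proposal is correct and follows essentially the same route as the paper's own proof: both start from the counting representation $\Delta N_C=\sum_j\mathbbm{1}_{\{\tau_j\in C\}}$, deduce that $\{\Delta N_C;\,C\in\mathcal{C}\}$ is a Poisson process on $\mathcal{C}$ (giving independence on disjoint sets and identical Poisson laws for equal $m$-measure), and then handle stochastic continuity via $m(U_n\triangle A)\to 0$ exactly as in the Brownian case. The only cosmetic differences are that the paper invokes Proposition~\ref{defLevyequiv} directly rather than Lemma~\ref{lemequivstat}, and for condition~(4) it bounds $P(|\Delta N_{U_n\setminus A}|\neq 0)=1-e^{-c\,m(U_n\setminus A)}$ instead of using the $L^1$ bound, but these amount to the same argument.
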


\begin{proof}
For any $C\in\mathcal{C}$, the increment $\Delta N_C$ can be written
\begin{equation}\label{eq:incPoisson}
\Delta N_C = \sum_j \mathbbm{1}_{\{ \tau_j \in C \}}.
\end{equation}
Consequently, the general definition of a Poisson process (see
\cite{kingman}) shows that $\{ \Delta N_C;\; C\in\mathcal{C} \}$
is a Poisson process indexed by the collection $\mathcal{C}$ and
therefore, the conditions (2') and (3') of Proposition
\ref{defLevyequiv} are satisfied.

\noindent
For the stochastic continuity, consider a sequence $(U_n)_{n\in\N}$ in $\mathcal{A}$ as in (\ref{eq:cond4}). For any $0<\epsilon<1$,
\begin{align*}
P(|N_{U_n}-N_A| > \epsilon)
&\leq P(| N_{U_n}-N_{U_n\cap A} | > \epsilon) + P(| N_{A}-N_{U_n\cap A} | > \epsilon) \\
&\leq \underbrace{P(| \Delta N_{U_n\setminus A} | \neq 0)}_{1-e^{-c\ m(U_n\setminus A)}} + \underbrace{P(| \Delta N_{A\setminus U_n} | \neq 0)}_{1-e^{-c\ m(A\setminus U_n)}}.
\end{align*}
As in the Brownian case, we conclude that $P(|N_{U_n}-N_A| > \epsilon) \rightarrow 0$ as $n\rightarrow\infty$.
\end{proof}

\item {\em Set-indexed compound Poisson process:}
A process $X=\{X_U;\;U\in\mathcal{A}\}$ is called a set-indexed compound Poisson process if it admits a representation
$$\forall U\in\mathcal{A};\quad X_U=\sum_{j}X_j \mathbbm{1}_{\{\tau_j \in U\}}$$
where $(X_n)_{n\in\mathbf{N}}$ is a sequence of i.i.d. real random variables and
$N=\{N_U, U\in\mathcal{A}\}$ is a set-indexed Poisson process of mean measure $\mu=c.m$ ($c>0$), defined by
$N_U=\sum_{j}\mathbbm{1}_{\{\tau_j \in U\}}$ for all $U\in\mathcal{A}$, independent of the sequence $(X_n)_{n\in\mathbf{N}}$.

Notice that the set-indexed compound Poisson process is an extension of the real-parameter compound Poisson process (take $\mathcal{A}=\left\{[0,t];\;t\in\mathbf{R}_+\right\}$).

\begin{proposition}
If $X=\{X_U;\;U\in\mathcal{A}\}$ is a set-indexed compound Poisson process,
then $X$ is a set-indexed L\'evy process and for all $U\in\mathcal{A}$ the distribution $\mu_U$ of $X_U$ satisfies:
\begin{align}\label{eq:compound_charact}
\forall z\in\mathbf{R};\quad\hat{\mu}_U(z)=\exp\left[c\ m(U)(\hat{\sigma}(z)-1)\right]
\end{align}
for some $c>0$ and some probability distribution $\sigma$.
\end{proposition}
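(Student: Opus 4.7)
The plan is to verify the four conditions of Definition 3.1 for $X_U=\sum_j X_j\,\mathbbm{1}_{\{\tau_j\in U\}}$ and to read off the characteristic function formula (3.4) from one central computation. First, because $m(\emptyset')=0$ and $N$ has mean measure $c\cdot m$, $N_{\emptyset'}=0$ a.s., hence $X_{\emptyset'}=0$ a.s. By the linearity of the inclusion-exclusion formula defining $\Delta X$ and the identity $\Delta N_C=\sum_j\mathbbm{1}_{\{\tau_j\in C\}}$ (already established in the proof of the Poisson case), every increment admits the representation
\[
\Delta X_C=\sum_j X_j\,\mathbbm{1}_{\{\tau_j\in C\}},\qquad C\in\mathcal{C}.
\]

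The heart of the argument is a single joint-characteristic-function calculation. For pairwise disjoint $C_1,\dots,C_n\in\mathcal{C}$, I would condition on the point sequence $(\tau_j)_j$, which is independent of $(X_j)_j$. Since the $C_k$ are disjoint, each $X_j$ enters at most one of the sums $\Delta X_{C_k}$, and the conditional independence of $(X_j)_j$ yields
\[
E\Big[\exp\Big(i\sum_{k=1}^n\lambda_k\Delta X_{C_k}\Big)\,\Big|\,(\tau_j)_j\Big]=\prod_{k=1}^n\hat{\sigma}(\lambda_k)^{N_{C_k}}.
\]
Disjointness also ensures that $N_{C_1},\dots,N_{C_n}$ are independent Poisson variables with parameters $c\cdot m(C_k)$, and the Poisson generating function $E[z^{N_{C_k}}]=\exp[c\cdot m(C_k)(z-1)]$ then produces
\[
E\Big[\exp\Big(i\sum_{k=1}^n\lambda_k\Delta X_{C_k}\Big)\Big]=\prod_{k=1}^n\exp\!\bigl[c\cdot m(C_k)(\hat{\sigma}(\lambda_k)-1)\bigr].
\]
This single factorization delivers three things at once: the independence of $\Delta X_{C_1},\dots,\Delta X_{C_n}$ (condition (2) of Definition 3.1); the fact that the law of $\Delta X_C$ depends on $C$ only through $m(C)$ (condition (3') of Proposition 3.3); and, specializing to $n=1$ and $C_1=U\in\mathcal{A}$, the announced formula (3.4).

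It remains to upgrade these facts to the stated definition. Lemma 2.4 converts conditions (2) and (3') into the full $m$-stationarity of $\mathcal{C}_0$-increments required by Definition 3.1(3). For stochastic continuity, given $(U_n)$ satisfying the hypothesis (3.1), I would use $U_n\cap A\in\mathcal{A}$ to write $X_{U_n}-X_A=\Delta X_{U_n\setminus A}-\Delta X_{A\setminus U_n}$ and observe that $\{X_{U_n}\neq X_A\}\subseteq\{N_{U_n\setminus A}\neq 0\}\cup\{N_{A\setminus U_n}\neq 0\}$; the right-hand side already tends to $0$ in probability by the stochastic continuity of $N$ proved in the preceding proposition.

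The main delicacy lies in the conditioning step: one must verify carefully that, conditionally on the entire sequence $(\tau_j)_j$, the sum defining $\Delta X_C$ is genuinely a sum over a finite (random) index set, and that disjointness of the $C_k$'s — rather than mere orthogonality of the resulting indicator patterns — is what legitimates the product form above. Once that is in place, everything else is routine bookkeeping around the Poisson probability generating function.
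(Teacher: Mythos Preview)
Your proof is correct and follows essentially the same strategy as the paper: represent $\Delta X_C$ as $\sum_j X_j\mathbbm{1}_{\{\tau_j\in C\}}$, condition on the Poisson points, and exploit the Poisson probability generating function. Two minor differences are worth noting. First, you compute the \emph{joint} characteristic function of $(\Delta X_{C_1},\dots,\Delta X_{C_n})$ in one stroke, which is cleaner and in fact more rigorous than the paper's argument, which only checks $E[e^{i\lambda(\Delta X_{C_1}+\Delta X_{C_2})}]$ for a single $\lambda$ (strictly speaking insufficient for independence, though the intended extension is obvious). Second, for stochastic continuity the paper argues directly from the characteristic function $\hat{\mu}_{U_n}\to 1$, whereas you reduce to the event $\{N_{U_n\setminus A}\neq 0\}\cup\{N_{A\setminus U_n}\neq 0\}$ and invoke the Poisson estimate already established; both routes are immediate. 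A small notational slip: in your factorization the exponents should be the increments $\Delta N_{C_k}$ rather than $N_{C_k}$, since $C_k\in\mathcal{C}$ need not lie in $\mathcal{A}$.
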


\begin{proof}
For any $C\in\mathcal{C}$, the increment $\Delta X_C$ can be written
\begin{equation}\label{eq:incCompound}
\Delta X_C = \sum_j X_j \mathbbm{1}_{\{ \tau_j \in C \}}.
\end{equation}
We compute the characteristic function of $\Delta X_C$: For all $\lambda\in\mathbf{R}$, we have
\begin{align*}
E[e^{i\lambda \Delta X_C}] &= E\left( E[e^{i\lambda \Delta X_C} \mid \sigma(\tau_1,\tau_2,\dots)] \right)\\
&= E\left[ \varphi(\lambda)^{\Delta N_C} \right]
\end{align*}
where $\varphi$ denotes the characteristic function of $X_0$. We
used the fact that, conditionally to the $\tau_j$'s, $\Delta X_C$
is the sum of $\Delta N_C$ i.i.d. random variables. Then
\begin{align*}
E\left[ \varphi(\lambda)^{\Delta N_C} \right]
&= \sum_j E[ \varphi(\lambda)^{\Delta N_C} \mid \Delta N_C = j]. P\{\Delta N_C=j\} \\
&= \sum_j \varphi(\lambda)^j . P\{\Delta N_C=j\} \\
&= \sum_j \left( \varphi(\lambda) \right)^j  e^{-\mu(C)}\frac{\mu(C)^j}{j!}.
\end{align*}
We conclude the computation
\begin{align*}
E[e^{i\lambda \Delta X_C}] = e^{-\mu(C)} \sum_j \frac{(\varphi(\lambda).\mu(C))^j}{j!}
= e^{-\mu(C)}\ e^{\varphi(\lambda)\mu(C)}
= e^{\mu(C)[\varphi(\lambda) - 1]}.
\end{align*}
This relation proves the stationarity condition (3') of Proposition \ref{defLevyequiv}.

To prove the independence condition (2'), let us consider two
subsets $C_1, C_2\in\mathcal{C}$ such that $C_1\cap
C_2=\emptyset$. We remark that the same computation leads to
\begin{align*}
E[e^{i\lambda (\Delta X_{C_1}+\Delta C_{C_2})}]
= e^{(\mu(C_1)+\mu(C_2))[\varphi(\lambda) - 1]}
= E[e^{i\lambda \Delta X_{C_1}}] . E[e^{i\lambda \Delta X_{C_1}}],
\end{align*}
which proves the independence of $\Delta X_{C_1}$ and $\Delta
X_{C_2}$.

\noindent For the stochastic continuity, consider a sequence
$(U_n)_{n\in\N}$ in $\mathcal{A}$ as in (\ref{eq:cond4}). From the
structure (\ref{eq:compound_charact}) of the characteristic
function of $X_{U_n}$, we deduce that $\hat{\mu}_{U_n}$ converges
to $1$ as $n\rightarrow\infty$. Then $X_{U_n}$ converges in law to
$0$ and thus in probability.
\end{proof}

\end{itemize}
\end{example}

All the previous examples generate a vector space which is included in the set of L\'evy processes. In the next section, we will prove that the closure of this vector space in some sense constitutes exactly the class of L\'evy processes.

\section{Infinitely divisible laws}\label{sectdiv}

In this section, we show that the law of any set-indexed L\'evy process $\{X_U;\;U\in\mathcal{A}\}$ is characterized by a single infinitely divisible distribution.
The first step is to prove that its marginal distributions, $P_{X_U}$ where $U\in\mathcal{A}$, are infinitely divisible.

This fact is improved in the proof of Theorem \ref{thRepCanon} for the marginal distributions of the increment process $\Delta X$.
There exists an infinitely divisible probability measure $\mu$ such that $P_{\Delta X_C}=\mu^{m(C)}$ for all $C\in\mathcal{C}$.
As a consequence, following the terminology of \cite{RR89}, the $\mathcal{C}$-process $\Delta X$ is an infinitely divisible independently scattered random measure on the $\delta$-ring $\mathcal{C}$.
Then, Proposition 2.1 of \cite{RR89} provides a characterization of the ID random measure $\Delta X$ in terms of its L\'evy-Khintchine representation
\begin{equation}\label{eq:IDISRM-LK}
E\left[e^{iz \Delta X_C}\right] = \exp\left\{ iz \nu_0(C) -\frac{1}{2}z^2 \nu_1(C)
+\int_{\R} \left[ e^{izx} - 1 - iz \tau(x) \right]\ F_C(dx) \right\},
\end{equation}
where $\tau(x)=x\ \mathbbm{1}_{|x|\leq 1} + (x/|x|) \mathbbm{1}_{|x|>1}$, $\nu_0$ is a signed measure, $\nu_1$ is a measure, $F_C$ is a L\'evy measure for all $C\in\mathcal{C}$, and for any $B\in\mathcal{B}(\R)$ such that $0\notin \overline{B}$, $C\mapsto F_C(B)$ is a measure.
Moreover, a form of stochastic continuity holds. The measure $\lambda$ defined by
\begin{equation*}
\forall C\in\mathcal{C},\quad
\lambda(C) = |\nu_0|(C) + \nu_1(C) + \int_{\R} (1\wedge x^2)\ F_C(dx),
\end{equation*}
satisfies: For any sequence $(C_n)_{n\in\N}$ in $\mathcal{C}$ such that $\lambda(C_n)\rightarrow 0$ as $n\rightarrow\infty$, $\Delta X_{C_n}$ converges to $0$ in probability.

The main result of this section, Theorem \ref{thRepCanon}, takes advantage of the combination of the joint increment independence and increment stationarity properties. An explicit expression (\ref{eq:deltaX-fdd}) for the finite-dimensional distributions of the $\mathcal{C}$-indexed process $\Delta X$ is obtained. This result is essential for the next characterizations of the set-indexed L\'evy processes : among the class of Markov processes, the L\'evy-It\^o and semimartingale representations (see Sections 6 and 7).
As a consequence of Theorem \ref{thRepCanon}, a L\'evy-Khintchine representation (\ref{eq:LK}) is derived for the law of this process. The previous measures $\nu_0$, $\nu_1$ and $C\mapsto F_C(B)$ are proportional to $m(C)$.

\

The following result will be necessary for infinitely divisibility of marginal laws of a set-indexed L\'evy process.

\begin{proposition}\label{HYP}
If $m$ is a Radon measure, then for any $U\in\mathcal{A}$ and for all integer $n$, there exists a family $(C_i)_{1\leq i\leq n}$ in $\mathcal{C}_0$ such that
\begin{enumerate}[(i)]
\item $\forall i\ne j$, $C_i\cap C_j=\emptyset$;

\item $\forall i,j\in\{1,\dots,n\}$, $m(C_i)=m(C_j)$;

\item and $$U=\bigcup_{1\leq i\leq n}C_i.$$
\end{enumerate}
The family $(C_i)_{1\leq i\leq n}$ is called a {\em $m$-partition of size $n$} of $U$.
\end{proposition}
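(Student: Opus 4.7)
The plan is to reduce the partitioning problem to a one-dimensional intermediate-value statement along an increasing flow from $\emptyset$ to $U$, and then to read off the partition as successive differences of a chain in $\mathcal{A}$.

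First I would observe that it suffices to construct an increasing chain
\begin{equation*}
\emptyset = V_0 \subset V_1 \subset \cdots \subset V_n = U
\end{equation*}
with each $V_i \in \mathcal{A}$ and $m(V_i) = \frac{i}{n}\,m(U)$. Setting $C_i := V_i \setminus V_{i-1}$, the membership $C_i \in \mathcal{C}_0$ is automatic from $V_{i-1}, V_i \in \mathcal{A}$; pairwise disjointness follows from the chain structure; the equal measures $m(C_i) = m(U)/n$ follow by subtraction; and $\bigcup_{i=1}^n C_i = V_n \setminus V_0 = U$ since $V_0 = \emptyset \in \mathcal{A}$ by axiom (1) of Definition \ref{basic}.

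Next, the whole problem reduces to the intermediate-value claim: for every $t \in [0, m(U)]$ there exists $V \in \mathcal{A}$ with $V \subset U$ and $m(V) = t$. To obtain this I would invoke the flow machinery of \cite{flow} to produce an increasing map $f : [0,1] \to \mathcal{A}$ with $f(0) = \emptyset$, $f(1) = U$, satisfying the continuity built into the definition of a flow (namely $f(s_k) \nearrow f(s)$ when $s_k \nearrow s$ and $f(s_k) \searrow f(s)$ when $s_k \searrow s$). Such a flow can be built out of the separability from above: the finite subclasses $\mathcal{A}_n$ furnish refining finite chains between $\emptyset$ and $g_n(U)$, and axiom (2) ensures that increasing unions of these (lying in some $B_n$) have closures in $\mathcal{A}$, so the chains can be glued and passed to the limit as $n\to\infty$. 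Since $m$ is Radon and $f$ takes values in the compact set $U$, the function $\phi(t) := m(f(t))$ stays finite, and monotone set-convergence along $f$ transfers to numerical continuity of $\phi$. Being non-decreasing from $0$ to $m(U)$ and continuous, $\phi$ attains every intermediate value; the standard intermediate value theorem then supplies $0 = t_0 < t_1 < \cdots < t_n = 1$ with $\phi(t_i) = \frac{i}{n}\,m(U)$, and I set $V_i := f(t_i)$.

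The main obstacle I anticipate is precisely the construction of this continuous flow together with the proof that $\phi$ is continuous. Separability from above supplies the dyadic-type refinements, but verifying that the limiting object is genuinely indexed by a continuum (rather than only by a countable dense subset) relies on axiom (2), while converting set-theoretic monotone limits in $\mathcal{A}$ into numerical limits of $m$ uses both the finiteness of $m$ on compact sets and the continuity from above and below that come with the Radon hypothesis. Once $\phi$ is known to be continuous and non-decreasing, the partition is read off by pure subtraction of measures.
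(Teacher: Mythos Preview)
Your proposal is correct and follows essentially the same approach as the paper: construct an elementary flow $f:[0,1]\to\mathcal{A}$ from $\emptyset$ to $U$, use continuity of $t\mapsto m(f(t))$ together with the intermediate value theorem to find $0=t_0<t_1<\dots<t_n=1$ with $m(f(t_i))=\frac{i}{n}m(U)$, and take $C_i=f(t_i)\setminus f(t_{i-1})$. The only difference is that the paper simply cites Lemma~3.3 of \cite{flow} (equivalently Lemma~5.1.6 of \cite{Ivanoff}) for the existence of the flow, whereas you sketch its construction from separability from above; your instinct about where the real work lies is accurate, and that work is precisely what the cited lemma packages.
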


\begin{proof}
For any $U\in\mathcal{A}$, Lemma 3.3 in \cite{flow} (or Lemma
5.1.6 in \cite{Ivanoff}) implies existence of an elementary flow
$f:\mathbf{R}_+\rightarrow\mathcal{A}$ such that $f(0)=\emptyset$
and $f(1)=U$. By continuity of $t\mapsto m[f(t)]$ from $[0,1]$ to
$[0,m(U)]$, there exists $t_1\in\mathbf{R}_+$ such that
$U_1=f(t_1)=\frac{m(U)}{n}$.

In the same way, the continuity of $t\mapsto m[f(t)]$ implies the existence of $t_2,t_3,\dots,t_{n-1}$ such that $0<t_1<t_2<\dots<t_{n-1}<t_n=1$ and
\begin{equation*}
\forall i=2,\dots,n;\quad
m[f(t_{i})]-m[f(t_{i-1})] = \frac{m(U)}{n}.
\end{equation*}
Setting $U_i=f(t_i)$ for all $i=2,\dots,n$, we get
$U_1\subseteq U_2\subseteq\dots\subseteq U_{n-1}\subseteq U_n=U$ and
$$m(U_2\setminus U_1)=m(U_3\setminus U_2)=\dots=m(U\setminus U_{n-1})=\frac{m(U)}{n}.$$
(It suffices to remark that $m(U_i\setminus
U_{i-1})=m(U_i)-m(U_{i-1})$ for all $i=2,\dots,n$.)

The family of $C_i=U_i\setminus U_{i-1}$ for $2\leq i\leq n$ and $C_1=U_1$ satisfies all the conclusions of the proposition.
\end{proof}

\begin{corollary}\label{corinfdiv}
If $X=\left\{X_U;\;U\in\mathcal{A}\right\}$ is a set-indexed L\'evy process on $(\mathcal{T},\mathcal{A},m)$, then for all $U\in\mathcal{A}$, the distribution of $X_U$ is infinitely divisible.
\end{corollary}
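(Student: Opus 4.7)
The plan is to apply the $m$-partition result of Proposition \ref{HYP} and combine it with the independence and stationarity conditions of Definition \ref{deflevy} to write $X_U$ as a sum of $n$ i.i.d.\ random variables for arbitrary $n$, which is exactly the definition of an infinitely divisible law.

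First I would fix $U\in\mathcal{A}$ and $n\in\mathbf{N}$, and invoke Proposition \ref{HYP} to produce a pairwise disjoint decomposition $U=\bigcup_{i=1}^{n}C_i$ with $C_i\in\mathcal{C}_0$ and $m(C_i)=m(U)/n$ for every $i$. Inspecting the construction in its proof, these $C_i$ have the form $C_i=U_i\setminus U_{i-1}$ along an increasing chain $\emptyset=U_0\subseteq U_1\subseteq\cdots\subseteq U_n=U$ in $\mathcal{A}$. From here two applications of Definition \ref{deflevy} finish the argument: condition (2), applied to the pairwise disjoint elements $C_1,\ldots,C_n$ of $\mathcal{C}$, makes the family $(\Delta X_{C_i})_{1\leq i\leq n}$ independent, and condition (3) --- or equivalently its reduced form (3') in Proposition \ref{defLevyequiv} --- forces the $\Delta X_{C_i}$ to share a common law since $m(C_i)$ does not depend on $i$.

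It only remains to observe that the chain telescopes: $\sum_{i=1}^{n}\Delta X_{C_i}=X_{U_1}+\sum_{i=2}^{n}(X_{U_i}-X_{U_{i-1}})=X_{U_n}=X_U$, where we use $X_\emptyset=0$ (so that $\Delta X_{C_1}=\Delta X_{U_1\setminus\emptyset}=X_{U_1}$). This exhibits $X_U$ as the sum of $n$ i.i.d.\ random variables for every $n\geq 1$, hence its law is infinitely divisible. There is no real obstacle beyond Proposition \ref{HYP}: once the equal-measure partition of $U$ by $\mathcal{C}_0$-sets is in hand, conditions (2) and (3) of Definition \ref{deflevy} together with the telescoping identity are essentially automatic.
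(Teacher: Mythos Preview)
Your proposal is correct and follows essentially the same approach as the paper: invoke Proposition~\ref{HYP} to obtain an $m$-partition $U=\bigcup_{i=1}^n C_i$ with $m(C_i)=m(U)/n$, use conditions (2) and (3) of Definition~\ref{deflevy} to see that the $\Delta X_{C_i}$ are i.i.d., and conclude from $X_U=\sum_{i=1}^n \Delta X_{C_i}$ that the law of $X_U$ is the $n$-fold convolution power of a common law. The only difference is that you spell out the telescoping via the chain $U_0\subseteq\cdots\subseteq U_n$ from the proof of Proposition~\ref{HYP}, whereas the paper invokes additivity of the increment process directly.
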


\begin{proof}
For any integer $n$, let us consider a $m$-partition of $U=\bigcup_{1\leq i\leq n}C_i$, where for all $i\ne j$, $C_i\cap C_j=\emptyset$ and $m(C_i)=m(C_j)=m(U)/n$.
The definition of the increment process gives
\begin{equation}\label{decompinc}
X_U = \sum_{i=1}^n \Delta X_{C_i}.
\end{equation}
By definition of the set-indexed L\'evy process, the $\Delta X_{C_i}$ are i.i.d. and let us denote by $\mu_n$ their distribution. By equation (\ref{decompinc}), the distribution $\mu$ of $X_U$ can be written as
\begin{equation*}
\mu = \mu_n * \dots * \mu_n = (\mu_n)^n
\end{equation*}
and therefore, $\mu$ is infinitely divisible.
\end{proof}

The infinitely divisible property of the marginal laws is improved in the following theorem. We prove that the increments of a set-indexed L\'evy process constitute an ID independently scattered random measure, which which does not directly follow from Definition \ref{deflevy} but precisely relies on $m$-stationarity on $\mathcal{C}_0$ and continuity of flows.

\begin{theorem}[Canonical Representation]\label{thRepCanon}
The three following statements hold:
\begin{enumerate}[(i)] 
\item Let $X=\{X_U\; U\in\mathcal{A}\}$ be a set-indexed L\'evy process and $U_0\in\mathcal{A}$ such that $m(U_0)>0$.
For all $U\in\mathcal{A}$, the distribution of $X_U$ is equal to $\mu^{m(U)}$ where $\mu=(P_{X_{U_0}})^{1/m(U_0)}$. 
Moreover, the law of the L\'evy process $X$ is completely determined by the law of $X_{U_0}$.
\item A set-indexed process $\{X_U;\;U\in\mathcal{A}\}$ is a set-indexed L\'evy process if and only if its increment process $\{\Delta X_C;\;C\in\mathcal{C}\}$ is an infinitely divisible independently scattered random measure which satisfies the $m$-stationarity property, i.e. such that the distributions of $\Delta X_{C_1}$ and $\Delta X_{C_2}$ are equal if $m(C_1)=m(C_2)$.

\item For any infinitely divisible probability measure $\mu$ on $(\mathbf{R},\mathcal{B})$, there exists a set-indexed L\'evy process $X$ such that
\begin{equation}\label{infeq}
\forall U\in\mathcal{A};\quad
P_{X_U} = \mu^{m(U)}.
\end{equation}
\end{enumerate}

\noindent In the three statements, for all $C_0,\dots,C_n\in\mathcal{C}$ and all Borel sets $B_1,\dots,B_n$, the finite dimensional distributions of the increment process $\Delta X$ are given by
\begin{align}\label{eq:deltaX-fdd}
P&\left( \Delta X_{C_0}\in B_0, \dots,  \Delta X_{C_n}\in B_n \right) \nonumber\\
&=
\int \mu^{m(\cap^{(n)})}(d\xi^{(n)}) \prod_{j_1<\dots<j_{n-1}} \mu^{m(\cap^{(n-1)}_{j_1,\dots,j_{n-1}}\setminus \cap^{(n)})}(d\xi^{(n-1)}_{j_1,\dots,j_{n-1}})
\dots \prod_{i<j}\mu^{m(\cap^{(2)}_{i,j}\setminus \cap^{(3)})}(d\xi^{(2)}_{i,j}) \nonumber\\
&\quad\times\prod_i \mu^{m(C_i\setminus \cap^{(2)})}(d\xi^{(1)}_{i})
 \mathbbm{1}_{B_i}\Bigg\{\xi^{(n)}+\sum_{k=1}^{n-1}
 \sum_{ \genfrac{}{}{0pt}{}{j_1<\dots<j_k}{i\in\{j_1,\dots,j_k\}} }
 \xi^{(k)}_{j_1,\dots,j_k} \Bigg\},
\end{align}
using the notation
\begin{align*}
\cap^{(2)}_{i,j} = C_i\cap C_j ;\quad
&\cap^{(3)}_{i,j,k} = C_i\cap C_j\cap C_k ;\quad
\dots \quad
\cap^{(n)} = C_1\cap\dots\cap C_n ;\\
\cap^{(2)} = \bigcup_{i<j}\cap^{(2)}_{i,j} ;\quad
&\cap^{(3)} = \bigcup_{i<j<k} \cap^{(3)}_{i,j,k} ;\quad
\dots
\end{align*}

\end{theorem}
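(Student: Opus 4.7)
The plan is to prove (i) first, then derive the finite-dimensional distribution formula (\ref{eq:deltaX-fdd}) by decomposing sets into atoms of the Boolean algebra they generate, and finally obtain (ii) and (iii) as consequences.

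For (i), I fix $U_0\in\mathcal{A}$ with $m(U_0)>0$ and set $\mu=(P_{X_{U_0}})^{1/m(U_0)}$, which is well-defined by Corollary \ref{corinfdiv}. I first establish $P_{X_U}=\mu^{m(U)}$ for sets $U\in\mathcal{A}$ with rational ratio $m(U)/m(U_0)=p/n$: Proposition \ref{HYP} supplies $m$-partitions of $U_0$ into $n$ equal pieces and of $U$ into $p$ equal pieces whose increments are i.i.d. by independence and $\mathcal{C}_0$-stationarity (Proposition \ref{defLevyequiv}, (3')), and solving $\mu_n^n=P_{X_{U_0}}$ gives $\mu_n=\mu^{m(U_0)/n}$, whence $P_{X_U}=\mu_n^p=\mu^{m(U)}$. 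To handle an arbitrary $U\in\mathcal{A}$, I would invoke Lemma 3.3 of \cite{flow} to obtain an elementary flow $f:[0,1]\to\mathcal{A}$ with $f(0)=\emptyset$ and $f(1)=U$, select rational instants $t_k\uparrow 1$ along which $m[f(t_k)]/m(U_0)\in\Q$ (possible by continuity of $t\mapsto m[f(t)]$), verify the Painlev\'e--Kuratowski condition (\ref{eq:cond4}) for $U_k=f(t_k)$, and apply continuity in probability (condition (4) of Definition \ref{deflevy}) together with weak continuity of the convolution semigroup $t\mapsto\mu^t$ to conclude $P_{X_U}=\mu^{m(U)}$.

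The finite-dimensional distribution (\ref{eq:deltaX-fdd}) is obtained by decomposing $C_0,\dots,C_n\in\mathcal{C}$ into the atoms of the finite Boolean algebra they generate: for each nonempty $S\subseteq\{0,\dots,n\}$ the atom $A_S=\bigcap_{i\in S}C_i\setminus\bigcup_{j\notin S}C_j$ lies in $\mathcal{C}$ by the ring structure discussed before the theorem, the atoms are pairwise disjoint, and $C_i=\bigsqcup_{S\ni i}A_S$. Finite additivity gives $\Delta X_{C_i}=\sum_{S\ni i}\Delta X_{A_S}$; independence across distinct atoms is condition (2) of Definition \ref{deflevy}; and the law of $\Delta X_{A_S}$ is $\mu^{m(A_S)}$ by the argument of (i) applied to the $\mathcal{C}_0$-representation $A_S=U\setminus V$ supplied by Lemma \ref{lemstructC}, combined with the convolution identity $P_{X_U}=P_{X_V}\ast P_{\Delta X_{A_S}}$ and the non-vanishing of $\hat\mu$. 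Integrating out the atom-indexed increments against their product laws and then regrouping them through the iterated intersections $\cap^{(k)}_{j_1,\dots,j_k}$ and their successive differences reproduces exactly the product integral in (\ref{eq:deltaX-fdd}).

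For (ii), the forward direction combines the discussion preceding the theorem (which already establishes that $\Delta X$ is an ISRM on $\mathcal{C}$) with (i), yielding infinite divisibility and $m$-stationarity; the converse is immediate, since setting $X_U=\Delta X_U$ for $U\in\mathcal{A}\subset\mathcal{C}$ makes conditions (1)--(3) of Definition \ref{deflevy} automatic, and continuity in probability (\ref{eq:cond4}) follows from $\sigma$-additivity of the ISRM applied to the symmetric differences $U_n\triangle A$. For (iii), given any infinitely divisible $\mu$ on $(\R,\mathcal{B})$, I define the candidate finite-dimensional distributions of $\{X_U;U\in\mathcal{A}\}$ through formula (\ref{eq:deltaX-fdd}); projective consistency reduces to the convolution identity $\mu^{a+b}=\mu^a\ast\mu^b$ for nonnegative $a,b$, and the Kolmogorov extension theorem delivers the desired process, whose membership in the set-indexed L\'evy class is built into the construction. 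The main obstacle I anticipate is the passage from rational to irrational mass ratios in the proof of (i): it requires careful selection of an approximating sequence inside an elementary flow so that both set-theoretic limits in (\ref{eq:cond4}) genuinely coincide with $U$, together with the weak continuity of $t\mapsto\mu^t$, which in turn relies on the non-vanishing of $\hat\mu$ guaranteed by infinite divisibility.
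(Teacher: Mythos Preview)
Your approach is essentially the paper's: rational mass ratios via $m$-partitions (Proposition~\ref{HYP}), extension to irrational ratios along a flow using stochastic continuity, the Boolean-atom decomposition for formula~(\ref{eq:deltaX-fdd}), and Kolmogorov extension for~(iii). Three small technical points deserve attention, though none undermines the strategy.

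First, the atoms $A_S=\bigcap_{i\in S}C_i\setminus\bigcup_{j\notin S}C_j$ lie in $\mathcal{C}(u)$, not $\mathcal{C}$, and Lemma~\ref{lemstructC} gives a representation $U\setminus V$ with $V\in\mathcal{A}(u)$, which is a $\mathcal{C}$-representation, not a $\mathcal{C}_0$-representation; so your shortcut to $P_{\Delta X_{A_S}}=\mu^{m(A_S)}$ is circular as stated. The paper handles this by an explicit induction: first $C\in\mathcal{C}_0$ via $X_U=\Delta X_{U\setminus V}+X_{U\cap V}$, then $C\in\mathcal{C}$ by induction on the number of subtracted sets, and finally $\mathcal{C}(u)$ by disjoint decomposition.

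Second, in~(i) the paper approaches $U$ from \emph{above} through a strictly larger $V\in\mathcal{A}$ so that $U=f(1/2)$ sits in the open interior of the flow domain, where left-continuity of $f$ is guaranteed by Definition~\ref{flowdef}. Your approach from below with $t_k\uparrow 1$ requires left-continuity at the right endpoint, which is not assumed; you can repair this by noting that $A=\overline{\bigcup_k f(t_k)}\in\mathcal{A}$ has $m(A)=m(U)$ and $A\subseteq U$, whence $\Delta X_{U\setminus A}=0$ a.s.\ by stationarity, but this should be made explicit.

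Third, for the converse in~(ii), $\sigma$-additivity of the ISRM does not apply directly to the symmetric differences $U_n\triangle A$ since these are not monotone. The paper instead invokes the Rajput--Rosinski L\'evy--Khintchine representation~(\ref{eq:IDISRM-LK}) and shows the parameters are proportional to $m$, yielding $P_{\Delta X_C}=\mu^{m(C)}$ and hence $\mu^t\to\delta_0$; an alternative in your spirit is to use $m$-stationarity to replace $U_n\setminus A$ by a nested sequence along a flow with the same measures, where $\sigma$-additivity \emph{does} give a.s.\ convergence to~$0$.
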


\

\begin{proof}
$(i)$
Let $\mu=P_{X_{U_0}}$. As in Corollary \ref{corinfdiv}, for any integer $n$, we consider a $m$-partition of $U_0=\bigcup_{1\leq i\leq n}C_i$, where $C_i\cap C_j=\emptyset$ for all $i\ne j$ and $m(C_i)=m(C_j)=m(U_0)/n$. 
We have $\mu=\left(P_{\Delta X_{C_1}}\right)^n$ and then $P_{\Delta X_{C_1}}=\mu^{1/n}=\mu^{m(C_1)/m(U_0)}$. Then the increment stationarity property implies that $P_{\Delta X_C}=\mu^{m(C)/m(U_0)}$ for any $C\in\mathcal{C}_0$ with $m(U_0)/m(C)\in\mathbf{N}$.

\noindent
For any element $U\in\mathcal{A}$ of measure $m(U)=m(U_0)\ p/n$ with $n,p\in\mathbf{N}^*$, we can use the same way to decompose $P_{X_U}=\left(\mu^{1/n}\right)^p= \mu^{m(U)/m(U_0)}$.

\noindent
More generally, for all element $U\in\mathcal{A}\setminus\{\mathcal{\emptyset,T}\}$ with $m(U)/m(U_0)\in\R_+\setminus\Q_+$, we can consider a set $V\in\mathcal{A}$ such that $U\subsetneq V$ and $m(U)<m(V)$. There exists an elementary flow $f:[0,1]\rightarrow\mathcal{A}$ such that $f(0)=\emptyset'=\bigcap_{W\in\mathcal{A}}W$, $f(1/2)=U$ and $f(1)=V$ (since $U\neq\emptyset$, we have $\emptyset'\subseteq U$). By continuity of $t\mapsto m[f(t)]$, we can construct a sequence $(t_n)_{n\in\mathbf{N}}$ in $[1/2,1]$ decreasing to $1/2$ such that $m[f(t_n)]/m(U_0)\in\mathbf{Q}_+^*$ for all $n$. For all $n$, we have $P_{X_{f(t_n)}}= \mu^{m[f(t_n)]/m(U_0)}$ and then stochastic continuity implies $P_{X_U}=\mu^{m(U)/m(U_0)}$, which proves that $\mu$ determines all marginal laws of $X$.
Since $X_{\emptyset}=0$, the expression $P_{X_U}=\mu^{m(U)/m(U_0)}$ also holds for $U=\emptyset$.
This proves the first part of $(i)$.

\

\noindent
In order to prove $(ii)$, the previous result is improved showing that the distribution of any increment $\Delta X_C$ with $C\in\mathcal{C}$ is also determined by $\mu$ as $P_{\Delta X_C}=\mu^{m(C)/m(U_0)}$.
We first consider the case of $C=U\setminus V\in\mathcal{C}_0$, where $U,V\in\mathcal{A}$. We have $X_U = \Delta X_{U\setminus V} + X_{U\cap V}$ and then, using
\begin{equation*}
\forall s,t\in\mathbf{R}_+;\quad\mu^{s+t} = \mu^s * \mu^t
\Leftrightarrow \hat{\mu}^{s+t} = \hat{\mu}^s \hat{\mu}^t
\end{equation*}
where $\hat{\mu}$ denotes the characteristic function of the measure $\mu$, and independence of $\Delta X_{U\setminus V}$ and $X_{U\cap V}$, we get
\begin{align*}
\widehat{P_{X_U}} = \widehat{P_{\Delta X_{U\setminus V}}} \widehat{P_{X_{U\cap V}}}
\Rightarrow
\widehat{P_{\Delta X_{U\setminus V}}} &= \hat{\mu}^{(m(U)-m(U\cap V))/m(U_0)}\\
&= \hat{\mu}^{m(U\setminus V)/m(U_0)}.
\end{align*}
By definition of $\mu^t$, it leads to $P_{\Delta X_{U\setminus V}}=\mu^{m(U\setminus V)/m(U_0)}$.

\noindent
In the same way, for all $C=U\setminus\bigcup_{1\leq i\leq n}U_i \in\mathcal{C}$ where $U,U_1,\dots,U_n\in\mathcal{A}$, we write
\begin{equation*}
\Delta X_{U\setminus\bigcup_{1\leq i\leq n-1}U_i} =
\Delta X_{U\setminus\bigcup_{1\leq i\leq n}U_i} + \Delta X_{U\cap U_n\setminus\bigcup_{1\leq i\leq n-1}U_i}.
\end{equation*}
Using the independence of $\Delta X_{U\setminus\bigcup_{1\leq i\leq n}U_i}$ and $\Delta X_{U\cap U_n\setminus\bigcup_{1\leq i\leq n-1}U_i}$, we can deduce
\begin{equation}\label{eqPdeltaXC}
\forall C=U\setminus\bigcup_{1\leq i\leq n}U_i\in\mathcal{C};\quad
P_{\Delta X_C} = \mu^{m(C)/m(U_0)}
\end{equation}
by induction on $n$. 
Decomposing elements of $\mathcal{C}(u)$ as disjoint unions of elements in $\mathcal{C}$, (\ref{eqPdeltaXC}) can be extended in
\begin{equation}\label{eqPdeltaXCu}
\forall C\in\mathcal{C}(u);\quad
P_{\Delta X_C} = \mu^{m(C)/m(U_0)}.
\end{equation}

Since $\Delta X$ is known to be an independently scattered measure (cf. Section \ref{sec:def}), expression (\ref{eqPdeltaXC}) implies that $\Delta X$ is an infinitely divisible ISRM, such that the distributions of $\Delta X_{C_1}$ and $\Delta X_{C_2}$ are equal if $m(C_1)=m(C_2)$. This proves the direct part of $(ii)$.

For the converse part of $(ii)$, assume that the increment process $\Delta X$ of an $\mathcal{A}$-indexed process is an infinitely divisible ISRM which satisfies the $m$-stationarity property. Then the characteristic function of $\Delta X_C$ is given by (\ref{eq:IDISRM-LK}) (see Proposition~2.1 of \cite{RR89}).
For any $C\in\mathcal{C}$ and $B\in\mathcal{B}(\R)$ with $0\notin\overline{B}$, the quantities $\nu_0(C)$, $\nu_1(C)$ and $F_C(B)$ only depend on $m(C)$.
The finite additivity in $C$ implies that they are proportional to $m(C)$.

\noindent For instance, writing $\nu_0(C) = \Phi(m(C))$, since $\nu_0$ is a signed measure, we have for all disjoints subsets $C_1$ and $C_2$ in $\mathcal{C}$ such that $C_1\cup C_2\in\mathcal{C}$,
\begin{align*}
\Phi(m(C_1)+m(C_2)) = \nu_0(C_1\cup C_2) = \nu_0(C_1) + \nu_0(C_2)
= \Phi(m(C_1)) + \Phi(m(C_2)).
\end{align*}
This additivity of $\Phi$ implies that $\Phi(m(C)) = a\ m(C)$ with $a\in\R$.
The same argument applied to the measures $\nu_1$ and $C\mapsto F_C(B)$ leads to
\begin{equation*}
E\left[e^{iz \Delta X_C}\right] = \exp\left( m(U) \left\{ i a z -\frac{1}{2}z^2 b
+\int_{\R} \left[ e^{izx} - 1 - iz \tau(x) \right]\ \nu(dx) \right\}\right),
\end{equation*}
where $a,b\in\R$, $b>0$ and $\nu$ is a measure.
Considering the infinitely probability measure $\mu$ with characteristic function
\begin{equation*}
\int e^{iz x}\mu(dx) = \exp\left\{ i a z -\frac{1}{2}z^2 b
+\int_{\R} \left[ e^{izx} - 1 - iz \tau(x) \right]\ \nu(dx) \right\},
\end{equation*}
we deduce $P_{\Delta X_C} = \mu^{m(C)}$ for all $C\in\mathcal{C}$.
This relation proves the $m$-stationarity of $\mathcal{C}_0$-increments of $X$, which completes the independence of increments which follows the fact that $\Delta X$ is an ISRM.

It remains to prove the stochastic continuity of $X$.
Let $(U_n)_{n\in\mathbf{N}}$ be a sequence in $\mathcal{A}$ such that
\begin{equation*}
\overline{\bigcup_{n} \bigcap_{k\geq n} U_k} = \bigcap_n \overline{\bigcup_{k\geq n} U_k} = A\in\mathcal{A}.
\end{equation*}
For all $n\in\mathbf{N}$, we have
\begin{align*}
X_{U_n} - X_A &= X_{U_n} - X_{U_n\cap A} + X_{U_n\cap A} - X_A \\
&= \Delta X_{U_n\setminus A} - \Delta X_{A\setminus U_n}.
\end{align*}
Since $(U_n\setminus A) \cap (A\setminus U_n) = \emptyset$,
$\Delta X_{U_n\setminus A}$ and $\Delta X_{A\setminus U_n}$ are
independent and the distribution of $\Delta X_{U_n\setminus A} -
\Delta X_{A\setminus U_n}$ is the convolution product of the laws
of $\Delta X_{U_n\setminus A}$ and $-\Delta X_{A\setminus U_n}$.
Then
\begin{align*}
P(|X_{U_n} - X_A| > \epsilon) &= \int\int \mathbbm{1}(|x-y|>\epsilon)\  \mu^{m(U_n\setminus A)}(dx)\ \mu^{m(A\setminus U_n)}(dy).
\end{align*}
By definition of $(U_n)_n$, we have $\lim_{n\rightarrow\infty}
m(U_n\setminus A)=0$ and $\lim_{n\rightarrow\infty} m(A\setminus
U_n)=0$. Using $\mu^t\rightarrow\delta_0$ as $t\rightarrow 0$, and
the boundedness of $\mathbbm{1}(|x-y|>\epsilon)$, we get
\begin{align*}
P(|X_{U_n} - X_A| > \epsilon) \rightarrow 0\quad\textrm{as }n\rightarrow\infty.
\end{align*}
This achieves the proof of $(ii)$.

\

To complete the proof of $(i)$, it remains to prove that $\mu=P_{X_{U_0}}$ completely determines the law of the process $X$.
Without any loss of generality, we assume that $m(U_0)=1$ (if not, consider $m(\bullet)/m(U_0)$ instead of $m(\bullet)$).

\noindent
For all $C_0$ and $C_1$ in $\mathcal{C}$, using additivity of $\Delta X$ we can decompose
\begin{align*}
\Delta X_{C_0} &= \Delta X_{C_0\setminus (C_0\cap C_1)} + \Delta X_{C_0\cap C_1} \\
\Delta X_{C_1} &= \Delta X_{C_1\setminus (C_0\cap C_1)} + \Delta X_{C_0\cap C_1}
\end{align*}
where $\Delta X_{C_0\cap C_1}$, $\Delta X_{C_0\setminus (C_0\cap C_1)}$ and $\Delta X_{C_1\setminus (C_0\cap C_1)}$ are pairwise independent. Then, conditionally to $ \Delta X_{C_0\cap C_1}$, the random variables $\Delta X_{C_0}$ and $\Delta X_{C_1}$ are independent. \\
We use this fact to compute for all Borel sets $B_0$ and $B_1$
\begin{align*}
P&\left( \Delta X_{C_0}\in B_0, \Delta X_{C_1}\in B_1 \right) \\
&=
\int P\left( \Delta X_{C_0}\in B_0, \Delta X_{C_1}\in B_1 \mid \Delta X_{C_0\cap C_1} \right) .P_{\Delta X_{C_0\cap C_1}}(d\xi) \\
&= \int P\left(\Delta X_{C_0}\in B_0 \mid \Delta X_{C_0\cap C_1} \right)
P\left(\Delta X_{C_1}\in B_1 \mid \Delta X_{C_0\cap C_1} \right) .P_{\Delta X_{C_0\cap C_1}}(d\xi) \\
&= \int P\left(\Delta X_{C_0\setminus (C_0\cap C_1)} + \xi\in B_0 \right)
P\left(\Delta X_{C_1\setminus (C_0\cap C_1)} + \xi\in B_1 \right) .P_{\Delta X_{C_0\cap C_1}}(d\xi),
\end{align*}
using independence of $\Delta X_{C_0\cap C_1}$ with $\Delta X_{C_0\setminus (C_0\cap C_1)}$ and $\Delta X_{C_1\setminus (C_0\cap C_1)}$.\\
Then we get the expression for the distribution of $(\Delta
X_{C_0},\Delta X_{C_1})$
\begin{align*}
P&\left( \Delta X_{C_0}\in B_0, \Delta X_{C_1}\in B_1 \right) \\
&=\int P_{\Delta X_{C_0\cap C_1}}(d\xi)\ \mathbbm{1}_{B_0}(y_0+\xi)\ P_{\Delta X_{C_0\setminus (C_0\cap C_1)}}(dy_0)\ \mathbbm{1}_{B_1}(y_1+\xi)\ P_{\Delta X_{C_1\setminus (C_0\cap C_1)}}(dy_1) \\
&=\int \mu^{m(C_0\cap C_1)}(d\xi)\ \mathbbm{1}_{B_0}(y_0+\xi)\ \mu^{m(C_0\setminus (C_0\cap C_1))}(dy_0)\ \mathbbm{1}_{B_1}(y_1+\xi)\ \mu^{m(C_1\setminus (C_0\cap C_1))}(dy_1),
\end{align*}
using expression (\ref{eqPdeltaXC}).

\noindent
More generally, for all $C_0,\dots,C_n\in\mathcal{C}$, we introduce the notation
\begin{align*}
\cap^{(2)}_{i,j} = C_i\cap C_j ;\quad
&\cap^{(3)}_{i,j,k} = C_i\cap C_j\cap C_k ;\quad
\dots \quad
\cap^{(n)} = C_1\cap\dots\cap C_n ;\\
\cap^{(2)} = \bigcup_{i<j}\cap^{(2)}_{i,j} ;\quad
&\cap^{(3)} = \bigcup_{i<j<k} \cap^{(3)}_{i,j,k} ;\quad
\dots
\end{align*}
Each random variable $\Delta X_{C_i}$ can be decomposed in
\begin{align*}
\Delta X_{C_i} = \Delta X_{\cap^{(n)}}
+ \sum_{ \genfrac{}{}{0pt}{}{j_1<\dots<j_{n-1}}{i\in\{j_1,\dots,j_{n-1}\}} }\Delta X_{\cap^{(n-1)}_{j_1,\dots,j_{n-1}}\setminus \cap^{(n)}}
+ \dots
+ \sum_{j\ne i}\Delta X_{\cap^{(2)}_{i,j}\setminus \cap^{(3)}}
+ \Delta X_{C_i\setminus \cap^{(2)}}.
\end{align*}
As in the case $n=2$, we get expression (\ref{eq:deltaX-fdd}) for $P\left( \Delta X_{C_0}\in B_0, \dots,  \Delta X_{C_n}\in B_n \right)$.
This expression shows that the law of the process $X$ is completely determined by $\mu$, i.e. by the law of $X_{U_0}$. Assertion $(i)$ is now proved.
\vspace{10pt}

For the proof of $(iii)$, let $\mu$ be an infinitely divisible measure. We aim at constructing a L\'evy process $X$ such that condition (\ref{infeq}) holds. 
For the sake of simplicity, we will construct the increment process $\Delta X$ indexed by $\mathcal{C}$ rather than $X$. We consider the canonical space $\Omega=\mathbf{R}^{\mathcal{C}}$ where any $\mathcal{C}$-indexed process $Y$ will be defined by $Y_C(\omega)=\omega(C)$ ($C\in\mathcal{C}$). 
As usual, $\Omega$ is endowed with the $\sigma$-field $\mathcal{F}$ generated by the cylinders
\begin{equation*}
\Lambda=\left\{ \omega\in\Omega: Y_{C_1}(\omega)\in B_1, \dots, Y_{C_n}(\omega)\in B_n \right\},
\end{equation*}
where $C_1,\dots,C_n\in\mathcal{C}$ and $B_1,\dots,B_n\in\mathcal{B}(\mathbf{R})$.

\noindent
As in the classical context of real-parameter L\'evy processes (see \cite{sato}), for all $t\in\mathbf{R}_+$, $\mu^t$ is defined and satisfies
\begin{eqnarray}\label{puissnu}
\left. \begin{array}{c}
\forall s,t\in\mathbf{R}_+;\quad
\mu^s * \mu^t = \mu^{s+t} \\
\mu^0 = \delta_0 \\
\mu^t \rightarrow \delta_0 \quad\textrm{as }t\rightarrow 0.
\end{array} \right\}
\end{eqnarray}

\noindent
For any $n\geq 0$ and any $C_0, C_1,\dots ,C_n$ in $\mathcal{C}$, we define for all Borel sets $B_0,B_1,\dots,B_n$,
\begin{align}\label{lawY}
\lambda&_{C_0,\dots,C_n}(B_0\times\dots\times B_n)\nonumber\\
&=\int \mu^{m(\cap^{(n)})}(d\xi^{(n)})
\prod_{j_1<\dots<j_{n-1}} \mu^{m(\cap^{(n-1)}_{j_1,\dots,j_{n-1}}\setminus \cap^{(n)})}(d\xi^{(n-1)}_{j_1,\dots,j_{n-1}})
\dots \prod_{i<j}\mu^{m(\cap^{(2)}_{i,j}\setminus \cap^{(3)})}(d\xi^{(2)}_{i,j}) \nonumber\\
&\quad\times\prod_i \mu^{m(C_i\setminus \cap^{(2)})}(d\xi^{(1)}_{i})
 \mathbbm{1}_{B_i}\Bigg\{\xi^{(n)}+\sum_{k=1}^{n-1}
 \sum_{ \genfrac{}{}{0pt}{}{j_1<\dots<j_k}{i\in\{j_1,\dots,j_k\}} }
 \xi^{(k)}_{j_1,\dots,j_k} \Bigg\},
\end{align}
using the notation of the direct part of the proof.

By definition of the product $\sigma$-field $\mathcal{B}(\mathbf{R}^{n+1})$, the additive function $\lambda_{C_0,\dots,C_n}$ can be extended to a measure. 
Using (\ref{puissnu}), the family of measures $(\lambda_{C_0,\dots,C_n})_{n,C_0,\dots,C_n}$ satisfies the usual consistency conditions. Following the general Kolmogorov extension Theorem (see \cite{kallenberg}, theorem 6.16), we get a probability measure $P$ such that the canonical process $Y$ has the finite dimensional distributions $\lambda_{C_0,\dots,C_n}$. In particular, $Y_C$ has distribution $\mu^{m(C)}$.

The set-indexed process is clearly finitely additive, in the sense that for all $C_1,C_2\in\mathcal{C}$ such that $C_1\cap C_2=\emptyset$ and $C_1\cup C_2\in\mathcal{C}$, we have $Y_{C_1\cup C_2}=Y_{C_1} + Y_{C_2}$ almost surely.
Then, if we define the $\mathcal{A}$-indexed process $X=\{X_U = Y_U;\; U\in\mathcal{A} \}$, the process $Y$ is exactly the increment process $\Delta X$ of $X$. Therefore, the distribution of $\Delta X_C$ is $\mu^{m(C)}$.
\vspace{5pt}

Let us show that $X$ is a set-indexed L\'evy process. From (\ref{lawY}), if we consider pairwise disjoint sets $C_1,\dots,C_n$ in $\mathcal{C}$, we get
\begin{align*}
P\left( \Delta X_{C_0}\in B_0, \dots,  \Delta X_{C_n}\in B_n \right)
&= \int \prod_i \mu^{m(C_i)}(d\xi^{(1)}_{i})\mathbbm{1}_{B_i}( \xi^{(1)}_{i} ) \\
&= \prod_i \int\mu^{m(C_i)}(d\xi^{(1)}_{i})\mathbbm{1}_{B_i}( \xi^{(1)}_{i} ) \\
&= \prod_i  P\left( \Delta X_{C_i}\in B_i \right),
\end{align*}
which proves the independence of $\Delta X_{C_0},\dots,\Delta X_{C_n}$. Then, since the distribution of $\Delta X_C$ only depends on $m(C)$, Lemma \ref{lemequivstat} implies the $m$-stationarity of the $\mathcal{C}_0$-increments of $X$.

The proof of the stochastic continuity of $X$ is identical as the proof of $(ii)$, since it only uses the relation $P_{\Delta X_C} = \mu^{m(C)}$ and independence of increments.
\end{proof}

\begin{remark}
In Theorem \ref{thRepCanon}, after having proved the equivalence between $X$ is a set-indexed L\'evy process and $\Delta X$ is an ID independently scattered random measure (with control measure proportional to $m$, assertion $(ii)$), assertion $(iii)$ is already proved in \cite{RR89} (Proposition 2.1).

However, the goal is to obtain the precise expression of the finite dimensional distributions of $\Delta X$, which opens the door to a characterisation of SI L\'evy processes in terms of Markov transition systems and also martingale properties.
\end{remark}

Another formulation of the canonical representation theorem is that the law of a set-indexed L\'evy process $X=\{X_U;\;U\in\mathcal{A}\}$ is completely determined by an infinitely divisible probability measure $\mu$, and that
\begin{equation*}
\forall U\in\mathcal{A};\quad
P_{X_U} = \mu^{m(U)}.
\end{equation*}
Thus, the L\'evy-Khintchine formula implies that the law of $X$ is characterized by a unique triplet $(\sigma,\gamma,\nu)$, where $\sigma\geq 0$, $\gamma\in\mathbf{R}$ and $\nu$ is a measure such that $\nu(\{ 0\}) = 0$ and $\int_{\mathbf{R}} \left[ |x|^2\wedge 1  \right] \ \nu(dx) <+\infty$.
For any $U\in\mathcal{A}$, the law of $X_U$ has the characteristic function $E[e^{i z X_U}] = \exp\Psi_U(z)$, where
\begin{equation}\label{eq:LK}
\Psi_U(z) = m(U) \left\{ -\frac{1}{2}\sigma^2 z^2 + i \gamma z
+\int_{\mathbf{R}} \left[ e^{i z x} - 1 - i z x\ \mathbbm{1}_D(x) \right] \ \nu(dx) \right\}
\end{equation}
with $D=\{x: |x| \leq 1 \}$.

\

As mentioned in Section \ref{sec:frame}, the natural definition for increment stationarity of set-indexed processes only concerns the sub-class $\mathcal{C}_0$ of $\mathcal{C}$. It is not obvious that this property implies the {\it a priori} stronger $m$-stationarity of the $\mathcal{C}$-increments.
In the proof of Theorem \ref{thRepCanon}, we observe that the $m$-stationarity on $\mathcal{C}_0$ is sufficient to derive the finite dimensional distributions of $\{\Delta X_C;\;C\in\mathcal{C}\}$, and consequently, the stationarity property on the whole $\mathcal{C}$ holds for set-indexed L\'evy processes (thanks to independence of increments and infinite divisibility property). 

The expression $P_{\Delta X_C} = \mu^{m(C)/m(U_0)}$ (for all $C\in\mathcal{C}$) clearly implies condition $(3')$ of Proposition \ref{defLevyequiv}. And conversely, we have just proved that if $X$ is a set-indexed L\'evy process, then the distribution of $\Delta X_C$ only depends on $m(C)$. Therefore, we can state the equivalent definition for set-indexed L\'evy processes:

\begin{corollary}
A set-indexed process $X=\{X_U;\;U\in\mathcal{A}\}$ is a set-indexed L\'evy process if and only if the following four conditions hold:
\begin{enumerate}
\item $X_{\emptyset'}=0$ almost surely.

\item for all pairwise disjoint sets $C_1,\dots,C_n$ in $\mathcal{C}$, the random variables $\Delta X_{C_1},\dots,\Delta X_{C_n}$ are independent.

\item for all $C_1, C_2\in \mathcal{C}$, we have
\begin{equation*}
m(C_1)=m(C_2)
\Rightarrow
\Delta X_{C_1} \stackrel{(d)}{=}
\Delta X_{C_2}.
\end{equation*}

\item if $(U_n)_{n\in\N}$ is a sequence in $\mathcal{A}$ such that
\begin{equation*}
\overline{\bigcup_{n} \bigcap_{k\geq n} U_k} = \bigcap_n \overline{\bigcup_{k\geq n} U_k} = A \in\mathcal{A}
\end{equation*}
then
\begin{equation*}
\lim_{n\rightarrow\infty} P\left\{ |X_{U_n} - X_A| > \epsilon \right\} = 0.
\end{equation*}

\end{enumerate}
\end{corollary}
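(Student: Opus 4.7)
The plan is to verify the corollary by checking the two implications separately, using only Theorem \ref{thRepCanon} and Lemma \ref{lemequivstat}, which together already encapsulate the whole content. Conditions (1), (2) and (4) of the corollary are literally identical to those of Definition \ref{deflevy}, so the only thing that changes between the two formulations is replacing the $\mathcal{C}_0$-stationarity with the marginal equality in law $m(C_1)=m(C_2)\Rightarrow \Delta X_{C_1}\stackrel{(d)}{=}\Delta X_{C_2}$ on the entire class $\mathcal{C}$. Hence each direction reduces to showing the equivalence of these two stationarity statements under the common hypotheses (1), (2), (4).

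For the direct implication, I would start from a process $X$ satisfying Definition \ref{deflevy} and invoke Theorem \ref{thRepCanon}: identity (\ref{eqPdeltaXC}) in its proof gives $P_{\Delta X_C}=\mu^{m(C)/m(U_0)}$ for every $C\in\mathcal{C}$, where $\mu=P_{X_{U_0}}$ and $U_0$ is any reference set with $m(U_0)>0$. Since the right-hand side depends on $C$ only through $m(C)$, the new condition (3) of the corollary follows at once.

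For the converse, assume the four conditions of the corollary and try to recover the $\mathcal{C}_0$-increment stationarity of Definition \ref{deflevy}. The first step is to observe that the corollary's condition (3), restricted to pairs in $\mathcal{C}_0\subset\mathcal{C}$, is exactly assertion $(i.)$ of Lemma \ref{lemequivstat}. The second step is to check that the independence hypothesis of Lemma \ref{lemequivstat} is met: any finite sequence $C_i=U_i\setminus V_i\in\mathcal{C}_0$ with $V_i\subset U_i$ and $U_i\subset V_{i+1}$ is pairwise disjoint in $\mathcal{C}$, so the corollary's condition (2) furnishes the independence of $\Delta X_{C_1},\dots,\Delta X_{C_n}$. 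Lemma \ref{lemequivstat} then yields assertion $(ii.)$, which is precisely condition (3) of Definition \ref{deflevy}. I do not foresee any serious obstacle: the deep work (the explicit formula $P_{\Delta X_C}=\mu^{m(C)/m(U_0)}$ and the reduction of joint stationarity to one-increment stationarity plus independence) is already done in Theorem \ref{thRepCanon} and Lemma \ref{lemequivstat}, and the present argument is just bookkeeping matching hypotheses to conclusions.
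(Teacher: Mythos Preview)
Your proposal is correct and coincides with the paper's own justification: the paper derives the corollary immediately from the expression $P_{\Delta X_C}=\mu^{m(C)/m(U_0)}$ of Theorem~\ref{thRepCanon} for the direct implication, and for the converse simply notes that the $\mathcal{C}$-stationarity of condition~(3) restricted to $\mathcal{C}_0$ is condition~(3') of Proposition~\ref{defLevyequiv} (which is Lemma~\ref{lemequivstat} repackaged). Your spelling out of the independence hypothesis needed in Lemma~\ref{lemequivstat} is a welcome bit of extra care, but the route is the same.
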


\section{Projection on flows}

The notion of flow is a key to reduce the proof of many theorems in the set-indexed theory and this notion was extensively studied in \cite{cime} and \cite{Ivanoff}. However, set-indexed processes should not be seen as a simple collection of real-parameter processes corresponding to projections on flows. Moreover, for the general indexing collection $\mathcal{A}$, we cannot expect to obtain a characterization of set-indexed L\'evy in terms of flows.
In particular, problems of existence of set-indexed processes, continuity in probability and increment independence cannot be addressed by their analogues on flows.

As we will show, projections of set-indexed L\'evy processes on flows generally are classical L\'evy processes, but the converse does not hold: The set-indexed L\'evy process has a very rich structure.
However, the notion of $m$-stationarity of $\mathcal{C}_0$-increments is well adapted to some classes of flows.

In this section, we define two types of flows, the elementary flows which take their values in the collection $\mathcal{A}$ and the simple flows which are finite unions of elementary flows and therefore taking their values in class $\mathcal{A}(u)$.

The main result shows the various relations between set-indexed processes and their projections on different flows.

\begin{definition}\label{flowdef}
An {\em elementary flow} is defined to be a continuous increasing function $f:[a,b]\subset\mathbf{R}_+\rightarrow\mathcal{A}$, i. e. such that
\begin{align*}
\forall s,t\in [a,b];\quad & s<t \Rightarrow f(s)\subseteq f(t)\\
\forall s\in [a,b);\quad & f(s)=\bigcap_{v>s}f(v)\\
\forall s\in (a,b);\quad & f(s)=\overline{\bigcup_{u<s}f(u)}.
\end{align*}

A {\em simple flow} is a continuous function $f:[a,b]\rightarrow\mathcal{A}(u)$ such that there exists a finite sequence $(t_0,t_1,\dots,t_n)$ with $a=t_0<t_1<\dots<t_n=b$ and elementary flows $f_i:[t_{i-1},t_i]\rightarrow\mathcal{A}$ ($i=1,\dots,n$) such that
\begin{equation*}
\forall s\in [t_{i-1},t_i];\quad
f(s)=f_i(s)\cup \bigcup_{j=1}^{i-1}f_j(t_j).
\end{equation*}
The set of all simple (resp. elementary) flows is denoted $S(\mathcal{A})$ (resp. $S^e(\mathcal{A})$).
\end{definition}

At  first glance, the notion of simple flow may seem artificial and unnecessary but the embedding in $\mathcal{A}(u)$ is the key point to get a characterization of distributions of set-indexed processes by projections on flows.

According to \cite{flow}, we use the parametrization of flows which allows to preserve the increment stationarity property under projection on flows (it avoids the appearance of a time-change).

\begin{definition}
For any set-indexed process $X=\left\{X_U;\;U\in\mathcal{A}\right\}$ on the space $(\mathcal{T},\mathcal{A},m)$ and any elementary flow $f:[a,b]\rightarrow\mathcal{A}$, we define the {\em $m$-standard projection of $X$ on $f$} as the process $$X^{f,m}=\left\{X^{f,m}_t=X_{f\circ\theta^{-1}(t)};\;t\in [a,b]\right\},$$
where $\theta:t\mapsto m[f(t)]$.
\end{definition}

The following result shows that Definition \ref{deflevy} for set-indexed L\'evy processes cannot be reduced to the L\'evy class for the projections on elementary flows. The increment stationarity property is characterized by the property on elementary flows, but simple flows are needed to characterize the independence of increments.

\begin{theorem}
Let $X=\left\{X_U;\;U\in\mathcal{A}\right\}$ be a set-indexed process with definite increments, then the following two assertions hold:
\begin{enumerate}[(i)]
\item\label{itcaractflow_flow} If $X$ is a set-indexed L\'evy process, then the standard projection of $X$ on any elementary flow $f:[0,T]\rightarrow\mathcal{A}$ such that $f(0)=\emptyset'$ is a real-parameter L\'evy process.

\item\label{itcaractflow_levy} If $X$ is continuous in probability, if $X_{\emptyset'}=0$, if the standard projection of $X$ on any simple flow $f:[a,b]\rightarrow\mathcal{A}(u)$ has independent increments, and if this projection has stationary increments in the special case of elementary flows,
then $X$ is a set-indexed L\'evy process.

\end{enumerate}
\end{theorem}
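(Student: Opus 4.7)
The plan is to verify, in each direction, the four defining conditions of Definition \ref{deflevy}, exploiting that an elementary flow traces out an increasing family in $\mathcal{A}$ whose successive differences lie in $\mathcal{C}_0$, while a simple flow traces out an increasing family in $\mathcal{A}(u)$ whose successive differences lie in $\mathcal{C}(u)$.

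For the direct assertion (i), I would set $Y_t := X^{f,m}_t = X_{f\circ\theta^{-1}(t)}$ on $[0,T]$ and verify the four standard properties of a one-parameter L\'evy process. The starting value $Y_0 = X_{f(0)} = X_{\emptyset'}=0$ is immediate from the hypothesis $f(0)=\emptyset'$ and condition (1) of Definition \ref{deflevy}. Given $0\le t_0<t_1<\cdots<t_n\le T$, the sets $U_i := f\circ\theta^{-1}(t_i)$ form an increasing family in $\mathcal{A}$, so the differences $U_i\setminus U_{i-1}$ are pairwise disjoint members of $\mathcal{C}_0\subset\mathcal{C}$, and condition (2) of Definition \ref{deflevy} yields the independence of the increments $Y_{t_i}-Y_{t_{i-1}} = \Delta X_{U_i\setminus U_{i-1}}$. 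Stationarity follows from condition (3) applied with $V=f\circ\theta^{-1}(h)$: because $\theta$ parametrizes by $m$-measure, $m(U_i\setminus V)= t_i-t_0$, aligning time lags with $m$-measures and yielding the joint equality appearing in \eqref{stat1p}. Stochastic continuity of $Y$ at $t\in[0,T]$ reduces to condition (4): if $t_n\to t$, the inner/outer continuity built into the definition of an elementary flow, together with continuity of $\theta^{-1}$, guarantees that $U_n := f\circ\theta^{-1}(t_n)$ and $A:=f\circ\theta^{-1}(t)$ satisfy \eqref{eq:cond4}.

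For the converse (ii), conditions (1) and (4) of Definition \ref{deflevy} are among the hypotheses, so only (2) and (3) must be checked. For (2), given pairwise disjoint $C_1,\dots,C_n\in\mathcal{C}$---equivalently in $\mathcal{C}(u)$ by the remark following Definition \ref{deflevy}---the strategy is to construct a single simple flow $f:[0,T]\to\mathcal{A}(u)$ with breakpoints $0=s_0<s_1<\cdots<s_n=T$ such that $f(s_i)\setminus f(s_{i-1}) = C_i$ for each $i$, so that the hypothesized independence of the increments of $X^{f,m}$ along simple flows delivers independence of the $\Delta X_{C_i}$. Using Lemma \ref{lemstructC} and its extension to $\mathcal{C}(u)$, one writes each $C_i = U_i\setminus V_i$ with $V_i\subset U_i\in\mathcal{A}(u)$, and pastes together elementary flow segments (guaranteed by Lemma~3.3 of \cite{flow} to connect any two comparable $\mathcal{A}$-sets) which first build up the base sets $V_i$ and then enlarge them to $U_i$ in a carefully chosen order that exploits the disjointness of the $C_i$. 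Once (2) is established, condition (3) follows via Lemma \ref{lemequivstat}: it suffices to verify its equivalent form (i), which is obtained by embedding any two $\mathcal{C}_0$-sets $U_1\setminus V_1$ and $U_2\setminus V_2$ of equal $m$-measure into elementary flows and transferring the equality of distributions from the hypothesized one-parameter stationarity of projections on elementary flows.

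The main obstacle is the explicit construction of the simple flow in part (ii): pairwise disjoint $\mathcal{C}$-sets do not come with any natural chain structure compatible with the monotone behaviour of a simple flow, and a careful use of the indexing collection axioms---closure under intersections, separability from above, and the existence of connecting elementary flows---is required to ensure that the breakpoint differences in $\mathcal{A}(u)$ coincide exactly with the prescribed $C_i$ rather than with larger or smaller sets.
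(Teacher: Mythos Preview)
Your outline for (i) is correct and essentially unpacks what the paper obtains by citing Proposition~1.6 of \cite{Ba02} and Proposition~5.4 of \cite{flow}: the paper simply invokes those two results for independence and stationarity and then checks stochastic continuity exactly as you do, using inner and outer continuity of an elementary flow to verify \eqref{eq:cond4}. So there is no divergence in (i).

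For (ii) your strategy is the right one, and it is again what the paper delegates to the same two references; but the concrete construction you propose for the simple flow has a gap. You ask for breakpoints with $f(s_i)\setminus f(s_{i-1})=C_i$, which forces $f(s_i)=f(s_0)\cup C_1\cup\cdots\cup C_i$. Since a simple flow takes values in $\mathcal{A}(u)$, this would require each partial union $\bigcup_{j\le i}C_j$ to lie in $\mathcal{A}(u)$, and that fails in general for disjoint $C_j\in\mathcal{C}$. Your fallback (``build up the $V_i$ then enlarge to $U_i$'') does not repair this: even for two disjoint sets $C_1=U_1\setminus V_1$ and $C_2=U_2\setminus V_2$ with $V_i\subset U_i$, the increment of the flow $\emptyset'\to V_1\to U_1\to U_1\cup V_2\to U_1\cup U_2$ over the last segment is $U_2\setminus(U_1\cup V_2)$, which equals $C_2$ only if $U_1\cap U_2\subseteq V_2$, and disjointness of $C_1,C_2$ does not guarantee that.

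The fix---and this is what the cited Proposition~1.6 of \cite{Ba02} actually does---is to pass through the finite lower sub-semilattice $\mathcal{A}'$ generated by all the $U^{(j)}_i$ appearing in representations $C_i=U^{(0)}_i\setminus\bigcup_j U^{(j)}_i$. With a consistent ordering $\mathcal{A}'=\{A_0=\emptyset',A_1,\dots,A_m\}$, the simple flow $\emptyset'\to A_1\to A_1\cup A_2\to\cdots\to\bigcup_{k\le m}A_k$ (each segment given by an elementary flow from $\emptyset'$ to $A_k$, unioned with the previous value) has all its values in $\mathcal{A}(u)$ and its breakpoint increments are exactly the left-neighbourhoods $L_k=A_k\setminus\bigcup_{j<k}A_j$. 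The hypothesis on simple-flow projections then gives independence of the $\Delta X_{L_k}$; since each $C_i$ is a disjoint union of some of the $L_k$ and the $C_i$ are pairwise disjoint, independence of the $\Delta X_{C_i}$ follows. Once you route the argument through left-neighbourhoods rather than trying to hit the $C_i$ directly, the rest of your plan for (ii) (reducing stationarity to condition~(i) of Lemma~\ref{lemequivstat} via elementary flows) goes through.
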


\begin{proof}

\begin{enumerate}[(i)]
\item According to Proposition 1.6 of \cite{Ba02} and Proposition 5.4 of \cite{flow}, if $X$ is a set-indexed L\'evy process and $f$ is an elementary flow, then the standard projection $X^{f,m}$ is a real-parameter process with independent and stationary increments.

\noindent Moreover, if $(t_n)_{n\in\N}$ is a sequence in $[0,T]$ converging to $t_{\infty}$, then the continuity of $f$ implies
\begin{align*}
\bigcap_{k\geq n} f(t_k) = f( \inf_{k\geq n}t_k ) \quad\textrm{and}\quad
\overline{\bigcup_{k\geq n} f(t_k)} = f( \sup_{k\geq n}t_k ).
\end{align*}
Then, by continuity of $f$,
\begin{align*}
\overline{\bigcup_n \bigcap_{k\geq n}f(t_k)} = \overline{\bigcup_n f( \inf_{k\geq n}t_k )} = f\left( \sup_n \inf_{k\geq n}t_k \right) = f(t_{\infty}),
\end{align*}
and
\begin{align*}
\bigcap_n \overline{\bigcup_{k\geq n}f(t_k)} = \bigcap_n f( \sup_{k\geq n}t_k ) = f\left( \inf_n \sup_{k\geq n}t_k \right) = f(t_{\infty}).
\end{align*}

\noindent From the continuity in probability of the set-indexed process $X$, we conclude that $X^{f,m}(t_n)$ converge to $X^{f,m}(t_{\infty})$ in probability. Thus $X^{f,m}$ is a real-parameter L\'evy process.

\item According to Proposition 1.6 of \cite{Ba02}, the set-indexed process $X$ has independent increments.  Proposition 5.4 of \cite{flow} implies the $m$-stationarity of $\mathcal{C}_0$-increments of $X$.

\noindent Then the continuity in probability of $X$ allows to
conclude that $X$ is a set-indexed L\'evy process.

\end{enumerate}
\end{proof}

\section{Markov properties}

The Markov property is strongly connected with L\'evy processes and has already been studied for set-indexed processes. Different authors have given various definitions for this property. Here we follow the definitions of set-Markov and $\mathcal{Q}$-Markov processes given by Balan and Ivanoff (\cite{BaIv02}), which seems to be the more appropriate in the set-indexed framework.

The notion of sub-semilattice plays an important role for the Markov property of set-indexed processes. Let us recall that a subset $\mathcal{A}'$ of $\mathcal{A}$ which is closed under arbitrary intersections is called a {\em lower sub-semilattice} of
$\mathcal{A}$. The ordering of a lower sub-semilattice $\mathcal{A}'=\{A_1,A_2, \dots\}$ is said to be {\em consistent} if $A_i\subset A_j \Rightarrow i\leq j$. 
Proceeding inductively, we can show that any lower sub-semilattice admits a consistent ordering, which is not unique in general (see \cite{BaIv02, Ivanoff}).

If $\{A_1,\dots,A_n\}$ is a consistent ordering of a finite lower sub-semilattice $\mathcal{A}'$, the set $C_i = A_i\setminus \bigcup_{j\leq i-1} A_j$ is called {\em the left neighbourhood} of $A_i$ in $\mathcal{A}'$. Since $C_i = A_i \setminus \bigcup_{A\in\mathcal{A}', A_i\nsubseteq A_i} A$, the definition of the left neighbourhood does not depend on the ordering.

Let us recall the definition of a $\mathcal{Q}$-Markov property.

\begin{definition}\label{def:Qsystem}
A collection $\mathcal{Q}$ of functions
\begin{align*}
\mathbf{R}\times\mathcal{B}(\mathbf{R})&\rightarrow \mathbf{R}_+ \\
(x, B)&\mapsto Q_{U,V}(x,B)
\end{align*}
where $U, V\in\mathcal{A}(u)$ are such that $U\subseteq V$,
is called a {\em transition system} if the following conditions are satisfied
\begin{enumerate}[(i)]
\item $Q_{U,V}(\bullet, B)$ is a random variable for all $B\in\mathcal{B}(\mathbf{R})$.
\item $Q_{U,V}(x,\bullet)$ is a probability measure for all $x\in\mathbf{R}$.
\item For all $U\in\mathcal{A}(u)$, $x\in\mathbf{R}$ and $B\in\mathcal{B}(\mathbf{R})$, $Q_{U,U}(x,B)=\delta_x(B)$.
\item\label{CK:Qsystem} For all $U\subseteq V\subseteq W\in\mathcal{A}(u)$,
\begin{align*}
\int_{\mathbf{R}} Q_{U,V}(x,dy)\ Q_{V,W}(y,B) = Q_{U,W}(x,B)
\qquad \forall x\in\R, \forall B\in\mathcal{B}(\R).
\end{align*}
\end{enumerate}
\end{definition}

\begin{definition}
A transition system $\mathcal{Q}$ is said to be {\em spatially homogeneous} if for all $U\subset V$, the function $Q_{U,V}$ satisfies
\begin{equation*}
\forall x\in\mathbf{R}, \forall B\in\mathcal{B}(\mathbf{R}),\quad
Q_{U,V}(x,B) = Q_{U,V}(0, B-x).
\end{equation*}
\end{definition}

\begin{definition}
A transition system $\mathcal{Q}$ is said to be {\em $m$-homogeneous} if the function $Q_{U,V}$ only depends on $m(V\setminus U)$, i.e. for all $U, V, U', V'$ in $\mathcal{A}(u)$ such that $U\subset V$ and $U'\subset V'$,
\begin{equation*}
m(V\setminus U)=m(V'\setminus U') \Rightarrow Q_{U,V} = Q_{U',V'}.
\end{equation*}
\end{definition}

\begin{definition}
Let $\mathcal{Q}$ be a transition system, $X=\{X_U;\;U\in\mathcal{A}\}$ a set-indexed process  with definite increments and $\left(\mathcal{F}_U\right)_{U\in\mathcal{A}}$ its minimal filtration. 
$X$ is said to be a $\mathcal{Q}$-Markov process if for all $U,V\in\mathcal{A}(u)$ with $U\subseteq V$
\begin{align*}
\forall B\in\mathcal{B}(\R),\quad
P\left( \Delta X_V \in B \mid \mathcal{F}_U \right) = Q_{U,V}(\Delta X_U, B).
\end{align*}
\end{definition}

\noindent Notice that for $U\in\mathcal{A}(u)$, $\mathcal{F}_U$ is defined by
$\displaystyle\mathcal{F}_U = \bigvee_{\genfrac{}{}{0pt}{}{V\in\mathcal{A}}{V\subseteq U}} \mathcal{F}_V$.

According to \cite{BaIv02}, $\mathcal{Q}$-Markov processes constitute a subclass of set-indexed processes satisfying the {\em set-Markov property}, i.e. such that $ \forall U\in\mathcal{A}, \forall V\in\mathcal{A}(u)$, the $\sigma$-algebras $\mathcal{F}_V$ and $\sigma(\Delta X_{U\setminus V})$ are independent conditionally to $\sigma(\Delta X_V)$.

In \cite{BaIv02}, it is proved that any set-indexed process with independent increments is a $\mathcal{Q}$-Markov process with a spatially homogeneous transition system $\mathcal{Q}$. The following result shows that the converse holds.

\begin{theorem}\label{th:MarkovIndep}
Let $X=\{X_U;\;U\in\mathcal{A}\}$ be a set-indexed process with definite increments. The two following assertions are equivalent:
\begin{enumerate}[(i)]
\item $X$ is a $\mathcal{Q}$-Markov process with a spatially homogeneous transition system $\mathcal{Q}$ ;
\item $X$ has independent increments.
\end{enumerate}
\end{theorem}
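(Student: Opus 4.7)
The direction $(ii) \Rightarrow (i)$ is already established in \cite{BaIv02}, so the work is in proving $(i) \Rightarrow (ii)$: starting from the $\mathcal{Q}$-Markov property with a spatially homogeneous transition system, derive independence of the increments $\Delta X_{C_1},\dots,\Delta X_{C_n}$ for arbitrary pairwise disjoint $C_1,\dots,C_n\in\mathcal{C}$. My plan is to first extract a conditional-law statement for incremental differences from the spatial homogeneity, then bootstrap it through a consistent ordering of a finite lower sub-semilattice, and finally reduce the general case to that setting.

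The first step exploits spatial homogeneity. Take $U\subseteq V$ in $\mathcal{A}(u)$; by finite additivity of the increment process, $\Delta X_{V\setminus U}=\Delta X_V-\Delta X_U$, with $\Delta X_U$ being $\mathcal{F}_U$-measurable. For any $B\in\mathcal{B}(\mathbf{R})$, the $\mathcal{Q}$-Markov property combined with $\mathcal{F}_U$-measurability of $\Delta X_U$ gives
\begin{equation*}
P\bigl(\Delta X_{V\setminus U}\in B\mid\mathcal{F}_U\bigr)=Q_{U,V}(\Delta X_U,\,B+\Delta X_U)=Q_{U,V}(0,B),
\end{equation*}
using $Q_{U,V}(x,B)=Q_{U,V}(0,B-x)$. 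Thus the conditional distribution of $\Delta X_{V\setminus U}$ given $\mathcal{F}_U$ is the deterministic measure $Q_{U,V}(0,\cdot)$, so $\Delta X_{V\setminus U}$ is independent of $\mathcal{F}_U$.

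The second step upgrades this to joint independence over a finite sub-semilattice. Let $\mathcal{A}'=\{A_1,\dots,A_N\}$ be a finite lower sub-semilattice of $\mathcal{A}$ endowed with a consistent ordering, put $V_k=\bigcup_{j\leq k}A_j\in\mathcal{A}(u)$, and let $C_k=A_k\setminus V_{k-1}$ be the left neighbourhood of $A_k$ in $\mathcal{A}'$. Since $V_k\setminus V_{k-1}=C_k$, step one says $\Delta X_{C_k}\perp\mathcal{F}_{V_{k-1}}$; because $\Delta X_{C_j}$ is $\mathcal{F}_{V_{k-1}}$-measurable for all $j<k$ (its inclusion--exclusion expansion uses only $X_W$ for $W\in\mathcal{A}$, $W\subseteq V_{k-1}$), a straightforward induction on $k$ yields that $\Delta X_{C_1},\dots,\Delta X_{C_N}$ are mutually independent.

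For the third step, given pairwise disjoint $C_1,\dots,C_n\in\mathcal{C}$ with $C_i=U_0^i\setminus\bigcup_k U_k^i$, I take $\mathcal{A}'$ to be the (finite) lower sub-semilattice generated by all the $U_0^i$ and $U_k^i$; its left neighbourhoods $\{D_\alpha\}$ partition $V_N=\bigcup_\alpha A_\alpha$. Since each $U_0^i$ and each $U_k^i$ is a union of some $D_\alpha$'s, so is each $C_i$; by disjointness of the $C_i$ the index sets $I_i=\{\alpha:D_\alpha\subseteq C_i\}$ are pairwise disjoint, and finite additivity of $\Delta X$ yields $\Delta X_{C_i}=\sum_{\alpha\in I_i}\Delta X_{D_\alpha}$. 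Independence of the $\Delta X_{D_\alpha}$ from step two then implies independence of the $\Delta X_{C_i}$. The main obstacle is the combinatorial bookkeeping in step three, namely verifying that the generated sub-semilattice really does produce left neighbourhoods that partition each $C_i$ cleanly; this uses only that $\mathcal{A}$ is closed under intersections (so $\mathcal{A}'$ is well defined) together with the additivity of $\Delta X$ already recorded in Section~\ref{sec:def}.
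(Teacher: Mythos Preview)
Your proof is correct and follows the same overall architecture as the paper's: generate the finite lower sub-semilattice $\mathcal{A}'$ from the sets defining the $C_i$, establish mutual independence of the left-neighbourhood increments, and then deduce independence of the $\Delta X_{C_i}$ as disjoint sums of these. The one substantive difference lies in how the independence of the left-neighbourhood increments is obtained. The paper invokes the explicit joint-law formula of Proposition~5(e) in \cite{BaIv02} and then performs a change of variables using spatial homogeneity to factor the integral. You instead extract directly the conditional-law identity $P(\Delta X_{V\setminus U}\in B\mid\mathcal{F}_U)=Q_{U,V}(0,B)$ from spatial homogeneity and run an induction along the consistent ordering. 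Your route is a bit more self-contained, since it does not rely on the integral representation from \cite{BaIv02}, and it makes transparent exactly where spatial homogeneity enters; the paper's route has the advantage of yielding the full joint distribution formula for the left-neighbourhood increments as a by-product, which is re-used later in the proof of Theorem~\ref{th:LevyMarkov}.
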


\begin{proof}
Since the implication $(ii)\Rightarrow (i)$ is proved in \cite{BaIv02}, we only need to prove the converse. We assume that $X$ is a $\mathcal{Q}$-Markov process with a spatially homogeneous transition system $\mathcal{Q}$.

The independence of increments of $X$ can be proved using projections on flows, since the $\mathcal{Q}$-Markov property and independence of increments are characterized by their analogous on simple flows (see \cite{Ba02}). Here we prefer giving a direct proof which illustrates the transition mechanism for set-indexed $\mathcal{Q}$-Markov processes.

\noindent
Consider any pairwise disjoint sets $C_1,\dots,C_n\in\mathcal{C}$. For all $1\leq i\leq n$, $C_i$ is defined by $C_i = U^{(0)}_i \setminus \left( \bigcup_{1\leq j\leq k_i} U^{(j)}_i \right)$, where $U^{(0)}_i, \dots, U^{(k_i)}_i \in\mathcal{A}$.
We define $\mathcal{A}'$ as the lower semilattice generated by the elements $U^{(j)}_i$ for all $1\leq i\leq n$ and $0\leq j\leq k_i$. We write $\mathcal{A}'=\{A_0=\emptyset',A_1,\dots,A_m\}$ with a consistent ordering.

\noindent By a reformulation of Proposition 5 (e) in \cite{BaIv02}, if $L_i$ denotes the left-neighbourhood of $A_i$ in $\mathcal{A}'$, for all Borel sets $B_0,\dots,B_m$,
\begin{align}\label{eq:MarkovC}
P&\left( \Delta X_{L_0} \in B_0, \dots, \Delta X_{L_m} \in B_m \right) \nonumber\\
&= \int_{\R^{m+1}} \mathbbm{1}_{B_0}(x_0)
\prod_{i=1}^m \mathbbm{1}_{B_i}(x_i-x_{i-1})\ Q_{\bigcup_{j=0}^{i-1}A_j, \bigcup_{j=0}^{i}A_j}(x_{i-1},dx_i) \ \mu(dx_0).
\end{align}
Since $\mathcal{Q}$ is spatially homogeneous, we get
\begin{align*}
P&\left( \Delta X_{L_0} \in B_0, \dots, \Delta X_{L_m} \in B_m \right) \\
&= \int_{\R^{m+1}} \mathbbm{1}_{B_0}(x_0)
\prod_{i=1}^m \mathbbm{1}_{B_i}(x_i-x_{i-1})\ Q_{\bigcup_{j=0}^{i-1}A_j, \bigcup_{j=0}^{i}A_j}(0,dx_i - x_{i-1}) \ \mu(dx_0) \\
&= \int_{\R^{m+1}} \mathbbm{1}_{B_0}(x_0)
\prod_{i=1}^m \mathbbm{1}_{B_i}(x_i)\ Q_{\bigcup_{j=0}^{i-1}A_j, \bigcup_{j=0}^{i}A_j}(0,dx_i) \ \mu(dx_0) \\
&= \mu(B_0) \prod_{i=1}^m Q_{\bigcup_{j=0}^{i-1}A_j, \bigcup_{j=0}^{i}A_j}(0,B_i).
\end{align*}
We deduce from this expression that $\Delta X_{L_0}, \dots, \Delta X_{L_m}$ are independent.

\noindent For all $1\leq i\leq n$, $C_i$ is the disjoint union of elements in $\{L_0,\dots,L_m\}$, then $\Delta X_{C_i}$ is the sum of some elements in $\{\Delta X_{L_0},\dots,\Delta X_{L_m}\}$. Since the $C_i$'s are pairwise disjoints, the independence of $\Delta X_{C_1},\dots,\Delta X_{C_n}$ follows from independence of $\Delta X_{L_0}, \dots, \Delta X_{L_m}$.
\end{proof}

The following result shows that set-indexed L\'evy processes constitute a sub-class of the $\mathcal{Q}$-Markov processes. As in the real-parameter case, they are characterized by the homogeneity of the transition system.

\begin{theorem}\label{th:LevyMarkov}
Let $X=\{X_U;\;U\in\mathcal{A}\}$ be a set-indexed process with definite increments. The two following assertions are equivalent:
\begin{enumerate}[(i)]
\item $X$ is a set-indexed L\'evy process ;
\item $X$ is a $\mathcal{Q}$-Markov process such that $X_{\emptyset'}=0$ and the transition system $\mathcal{Q}$ is spatially homogeneous and $m$-homogeneous.
\end{enumerate}
Consequently, if $\mathcal{Q}$ is a transition system which is both spatially homogeneous and $m$-homogeneous, then there exists a set-indexed process $X$ which is a $\mathcal{Q}$-Markov process.
\end{theorem}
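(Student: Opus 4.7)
The plan is to split the equivalence into the two implications $(i)\Rightarrow(ii)$ and $(ii)\Rightarrow(i)$, and then to obtain the existence assertion from $(i)\Rightarrow(ii)$ via Theorem \ref{thRepCanon}(iii).

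For $(i)\Rightarrow(ii)$: a set-indexed L\'evy process satisfies $X_{\emptyset'}=0$ a.s.\ and has independent increments by definition, so Theorem \ref{th:MarkovIndep} already yields a spatially homogeneous transition system $\mathcal{Q}$ for which $X$ is $\mathcal{Q}$-Markov. The only genuinely new content is $m$-homogeneity: for $U\subseteq V$ in $\mathcal{A}(u)$, the decomposition $\Delta X_V = \Delta X_U + \Delta X_{V\setminus U}$ together with independence gives $Q_{U,V}(x,B) = P(\Delta X_{V\setminus U}\in B-x)$, and Theorem \ref{thRepCanon} identifies $P_{\Delta X_{V\setminus U}}$ with $\mu^{m(V\setminus U)}$, which depends only on $m(V\setminus U)$.

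For $(ii)\Rightarrow(i)$: Theorem \ref{th:MarkovIndep} delivers independence of increments. For $m$-stationarity of $\mathcal{C}_0$-increments, I would take $C=U\setminus V\in\mathcal{C}_0$ with $V\subseteq U$; the Markov property together with spatial homogeneity identifies the law of $\Delta X_C = X_U - X_{U\cap V}$ with $Q_{U\cap V,U}(0,\cdot)$, which by $m$-homogeneity depends only on $m(C)$, and Lemma \ref{lemequivstat} then promotes this equality of one-dimensional laws to full $\mathcal{C}_0$-stationarity. For continuity in probability, write $\mu_t = Q_{U',V'}(0,\cdot)$ whenever $m(V'\setminus U')=t$: Chapman--Kolmogorov and $m$-homogeneity make $(\mu_t)_{t\geq 0}$ a convolution semigroup with $\mu_0 = \delta_0$, and each $\mu_t$ is infinitely divisible by the $m$-partition argument of Corollary \ref{corinfdiv} (applied to some $U\in\mathcal{A}$ with $m(U)=t$, which exists by continuity of $m$ along elementary flows as in Proposition \ref{HYP}). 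Consequently $\widehat{\mu}_t$ is non-vanishing and a continuous branch of the logarithm yields $\widehat{\mu}_t(\lambda)=\exp(t\,\psi(\lambda))$ for some continuous $\psi$, so $\mu_t\to\delta_0$ as $t\to 0$. For $(U_n)$ satisfying (\ref{eq:cond4}), the decomposition $X_{U_n}-X_A = \Delta X_{U_n\setminus A} - \Delta X_{A\setminus U_n}$ combined with $m(U_n\setminus A),m(A\setminus U_n)\to 0$ (by $\sigma$-additivity of $m$, as observed in Example \ref{ex:Levy}) then yields convergence in probability exactly as in the proof of Theorem \ref{thRepCanon}(ii).

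For the consequence: given any spatially and $m$-homogeneous transition system $\mathcal{Q}$, continuity of $t\mapsto m[f(t)]$ along elementary flows produces $U_0\in\mathcal{A}$ with $m(U_0)=1$; the $m$-partition argument of Corollary \ref{corinfdiv} shows that $\mu := Q_{\emptyset',U_0}(0,\cdot)$ is infinitely divisible, so Theorem \ref{thRepCanon}(iii) supplies a set-indexed L\'evy process $X$ with $P_{X_U}=\mu^{m(U)}$. By $(i)\Rightarrow(ii)$ this $X$ is $\mathcal{Q}'$-Markov for some spatially and $m$-homogeneous $\mathcal{Q}'$, and since both $\mathcal{Q}$ and $\mathcal{Q}'$ are determined by the one-parameter convolution semigroup $(\mu_t)$ induced by $\mu$, they coincide. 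The main obstacle is the continuity-in-probability step in $(ii)\Rightarrow(i)$: the hypotheses only yield a convolution semigroup $(\mu_t)$ in a purely algebraic sense, and continuity at $t=0$ is not a priori automatic. Infinite divisibility of each $\mu_t$, extracted from the fine structure of $\mathcal{A}$ via $m$-partitions, is precisely what rules out pathological semigroups and produces the factorization $\widehat{\mu}_t=\exp(t\,\psi)$ needed to close the argument.
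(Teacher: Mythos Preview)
Your argument is correct and follows essentially the same route as the paper's proof: both directions pivot on Theorem~\ref{th:MarkovIndep} for the equivalence between independent increments and spatial homogeneity, on Theorem~\ref{thRepCanon} for identifying the transition kernels as $Q_{U,V}(x,B)=\mu^{m(V\setminus U)}(B-x)$, and on the convolution semigroup $(\widetilde{Q}_t)_t$ extracted from Chapman--Kolmogorov together with $m$-homogeneity; the existence claim is then read off from Theorem~\ref{thRepCanon}(iii) in both treatments.

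Two minor points of contrast are worth recording. For $(i)\Rightarrow(ii)$ the paper does not invoke Theorem~\ref{th:MarkovIndep} but instead rebuilds the $\mathcal{Q}$-Markov structure from scratch: it specializes the explicit finite-dimensional formula~(\ref{eq:deltaX-fdd}) to the left-neighbourhoods of a semilattice and matches the result against the characterization of $\mathcal{Q}$-Markov processes in \cite{BaIv02}, Proposition~5(e). Your route---first apply Theorem~\ref{th:MarkovIndep} to obtain \emph{some} spatially homogeneous $\mathcal{Q}$, then identify it via $Q_{U,V}(x,B)=P(\Delta X_{V\setminus U}\in B-x)$---is shorter and avoids that external citation. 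For $(ii)\Rightarrow(i)$, the paper simply asserts that the multiplicative Cauchy relation $\widehat{\widetilde{Q}}_s\widehat{\widetilde{Q}}_t=\widehat{\widetilde{Q}}_{s+t}$ forces $\widehat{\widetilde{Q}}_t=\varphi^t$, calling this ``well known''; you correctly flag this as the delicate point and supply the missing ingredient---infinite divisibility of each $\widetilde{Q}_t$ via $m$-partitions, so that the distinguished logarithm is available and the additivity of $t\mapsto\psi_t(\lambda)$ (which is continuous in $\lambda$ and vanishes at $0$) upgrades to linearity. Your treatment here is more careful than the paper's.
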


\begin{proof}
In   the entire proof, we assume the existence of $U_0\in\mathcal{A}$ such that $m(U_0)=1$. If not, we consider $U_0\in\mathcal{A}$ such that $m(U_0)>0$ and we substitute $m(\bullet)$ with $m(\bullet)/m(U_0)$.

\bigskip
Suppose that $X=\{X_U;\;U\in\mathcal{A}\}$ is a set-indexed L\'evy process. In the proof of Theorem \ref{thRepCanon}, we showed that for all $C_0,\dots,C_n\in\mathcal{C}$ and all Borel sets $B_0,\dots,B_n$,
\begin{align}\label{eq:fddXC}
P&\left( \Delta X_{C_0}\in B_0, \dots,  \Delta X_{C_n}\in B_n \right) \nonumber\\
&=
\int_{\R^{n+1}} \mu^{m(\cap^{(n)})}(d\xi^{(n)}) \prod_{j_1<\dots<j_{n-1}} \mu^{m(\cap^{(n-1)}_{j_1,\dots,j_{n-1}}\setminus \cap^{(n)})}(d\xi^{(n-1)}_{j_1,\dots,j_{n-1}})
\dots \prod_{i<j}\mu^{m(\cap^{(2)}_{i,j}\setminus \cap^{(3)})}(d\xi^{(2)}_{i,j}) \nonumber\\
&\quad\times\prod_i \mu^{m(C_i\setminus
\cap^{(2)})}(d\xi^{(1)}_{i}).
 \mathbbm{1}_{B_i}\Bigg\{\xi^{(n)}+\sum_{k=1}^{n-1}
 \sum_{ \genfrac{}{}{0pt}{}{j_1<\dots<j_k}{i\in\{j_1,\dots,j_k\}} }
 \xi^{(k)}_{j_1,\dots,j_k} \Bigg\},
\end{align}
where $\mu = P_{X_{U_0}}$ with $m(U_0)=1$, and
\begin{align*}
\cap^{(2)}_{i,j} = C_i\cap C_j ;\quad
&\cap^{(3)}_{i,j,k} = C_i\cap C_j\cap C_k ;\quad
\dots \quad
\cap^{(n)} = C_1\cap\dots\cap C_n ;\\
\cap^{(2)} = \bigcup_{i<j}\cap^{(2)}_{i,j} ;\quad
&\cap^{(3)} = \bigcup_{i<j<k} \cap^{(3)}_{i,j,k} ;\quad
\dots
\end{align*}

\noindent For any lower semilattice $\mathcal{A}'=\{ A_0=\emptyset', A_1, \dots, A_k \}$ with a consistent ordering, the previous formula can be applied to the left-neighbourhoods $L_0,\dots,L_k$ of $\mathcal{A}'$. Obviously, the $L_i$ are pairwise disjoint and then
\begin{align*}
P\left( \Delta X_{L_0}\in B_0, \dots,  \Delta X_{L_n}\in B_n \right)
= \int_{\R^{n+1}} \prod_i \mu^{m(L_i)}(d\xi^{(1)}_{i})\mathbbm{1}_{B_i}( \xi^{(1)}_{i} ).
\end{align*}
Let us define the collection of functions $\mathcal{Q}$
\begin{align*}
\mathbf{R}\times\mathcal{B}(\mathbf{R})&\rightarrow \mathbf{R}_+ \\
(x, B)&\mapsto Q_{U,V}(x,B) = \mu^{m(V\setminus U)}(B-x)
\end{align*}
where $U, V\in\mathcal{A}(u)$ are such that $U\subseteq V$. We observe that $\mathcal{Q}$ is a transition system which is both spatially homogeneous and $m$-homogeneous and
\begin{align*}
P&\left( \Delta X_{L_0} \in B_0, \dots, \Delta X_{L_k} \in B_k \right) \\
&= \int_{\R^{k+1}} \mathbbm{1}_{B_0}(x_0)
\prod_{i=1}^k \mathbbm{1}_{B_i}(x_i-x_{i-1})\ Q_{\bigcup_{j=0}^{i-1}A_j, \bigcup_{j=0}^{i}A_j}(x_{i-1},dx_i) \ \mu^{m(\emptyset')}(dx_0).
\end{align*}
Then Proposition 5 (e) of \cite{BaIv02} allows to conclude that $X$ is a $\mathcal{Q}$-Markov process.

\bigskip
Conversely, assume that $\mathcal{Q}$ is a given transition system which is both spatially homogeneous and $m$-homogeneous.
For all $U\subset V$ in $\mathcal{A}(u)$ and $(x,B)\in \R\times\mathcal{B}(\R)$, we can write $Q_{U,V}(x,B) = \widetilde{Q}_{m(V\setminus U)}(B-x)$.

\noindent Condition (\ref{CK:Qsystem}) of Definition \ref{def:Qsystem} implies that for all $U,V,W\in\mathcal{A}(u)$ with $U\subseteq V\subseteq W$,
\begin{align*}
\forall x\in\R, \forall B\in\mathcal{B}(\R),\quad
\int \widetilde{Q}_{m(V\setminus U)}(dy)\ \widetilde{Q}_{m(W\setminus V)}(B-x-y)
= \widetilde{Q}_{m(W\setminus U)}(B-x)
\end{align*}
and thus
\begin{equation}\label{eq:convQ}
\widetilde{Q}_{m(V\setminus U)} * \widetilde{Q}_{m(W\setminus V)}
=\widetilde{Q}_{m(W\setminus U)}.
\end{equation}

\noindent Consider any $s,t\in\R_+$ such that $s\leq t$ and $s+t < m(\mathcal{T})$.
 From condition (\ref{IC:C1}) of Definition \ref{basic}, there exists $B\in\mathcal{A}(u)$ such that $s+t \leq m(B)$. Let $f:[0,1]\rightarrow\mathcal{A}(u)$ be a simple flow connecting $\emptyset$ to $B$. By continuity of the real function $\theta:u\mapsto m[f(u)]$, there exist $V,W \in\mathcal{A}(u)$ such that $U=\emptyset\subseteq V\subseteq W\subseteq B$, $m(V)=s$ and $m(W)=s+t$. Applying (\ref{eq:convQ}) to $U=\emptyset, V$ and $W$, we can state
\begin{equation}\label{eq:semigroupQ}
\forall 0\leq s\leq t \textrm{ such that } s+t < m(\mathcal{T}),\quad
\widetilde{Q}_s * \widetilde{Q}_t = \widetilde{Q}_{s+t}.
\end{equation}
Using the characteristic function $\widehat{\widetilde{Q}_u}$ of the probability measure $\widetilde{Q}_u$, expression (\ref{eq:semigroupQ}) is equivalent to
\begin{equation}\label{eq:semigroupQcharact}
\forall 0\leq s\leq t \textrm{ such that } s+t < m(\mathcal{T}),\quad
\widehat{\widetilde{Q}_s} \ \widehat{\widetilde{Q}_t} = \widehat{\widetilde{Q}_{s+t}}.
\end{equation}

\noindent It is well known that equation (\ref{eq:semigroupQcharact}) implies the existence of a function $\varphi:\R\rightarrow\C$ such that $\widehat{\widetilde{Q}_t} = \varphi^t$ for all $t< m(\mathcal{T})$.

\noindent Consider $U_0\in\mathcal{A}$ such that $m(U_0) = 1$ and the probability measure $\mu$ defined by $\mu(B) = Q_{\emptyset, U_0}(0,B) = \widetilde{Q}_{1}(B)$ for all $B\in\mathcal{B}(\R)$.
The function $\varphi$ is nothing but the characteristic function of $\mu$, and consequently
\begin{align*}
\forall t\in\R_+ \textrm{ such that } t < m(\mathcal{T}),\quad
\widetilde{Q}_{t} = \mu^t.
\end{align*}
Then the transition system $\mathcal{Q}$ is defined by
$Q_{U,V}(x,B) = \mu^{m(V\setminus U)}(B-x)$ for all $U\subset V$ and all $(x,B)\in\R\times\mathcal{B}(\R)$.

For any $C=U\setminus V\in\mathcal{C}_0$ with $U,V\in\mathcal{A}$ and $V\subset U$, we consider the lower semilattice $\mathcal{A}'$ generated by $U,V$. We use the consistent ordering $\mathcal{A}'=\{A_0=\emptyset', A_1=V, A_2=U\}$. From (\ref{eq:MarkovC}) with $B_0=B_1=\mathcal{T}$ and any Borel set $B_2$,
\begin{align}\label{eq:loiC0}
P(\Delta X_{U\setminus V}\in B_2) =  \mu^{m(U\setminus V)}(B_2).
\end{align}
Expression (\ref{eq:loiC0}) implies the stationarity condition of the equivalent definition for set-indexed L\'evy processes (Condition (3') of Proposition \ref{defLevyequiv}).

\noindent Moreover, Theorem \ref{th:MarkovIndep} implies that $X$ has independent increments.

It remains to prove the stochastic continuity in order to conclude that $X$ is a set-indexed L\'evy process.
Let $(U_n)_{n\in\mathbf{N}}$ be a sequence in $\mathcal{A}$ such that
\begin{equation*}
\overline{\bigcup_{n} \bigcap_{k\geq n} U_k} = \bigcap_n \overline{\bigcup_{k\geq n} U_k} = A\in\mathcal{A}.
\end{equation*}
In the same way as in the proof of Theorem \ref{thRepCanon}, we write for all $n\in\mathbf{N}$,
\begin{align*}
X_{U_n} - X_A = \Delta X_{U_n\setminus A} - \Delta X_{A\setminus U_n}.
\end{align*}
Therefore, the distribution of $X_{U_n} - X_A$ is the convolution product of the (independent) laws of $\Delta X_{U_n\setminus A}$ and $-\Delta X_{A\setminus U_n}$.
Then using (\ref{eq:loiC0}),
\begin{align*}
P(|X_{U_n} - X_A| > \epsilon) &= \int\int \mathbbm{1}(|x-y|>\epsilon)\  \mu^{m(U_n\setminus A)}(dx)\ \mu^{m(A\setminus U_n)}(dy).
\end{align*}
Since $\lim_{n\rightarrow\infty} m(U_n\setminus A)=0$ and $\lim_{n\rightarrow\infty} m(A\setminus U_n)=0$, we deduce that
\begin{align*}
P(|X_{U_n} - X_A| > \epsilon) \rightarrow 0\quad\textrm{as }n\rightarrow\infty.
\end{align*}

\

The existence of a $\mathcal{Q}$-Markov process, if $\mathcal{Q}$ is a spatially homogeneous and $m$-homogeneous transition system, follows from Theorem \ref{thRepCanon}.
\end{proof}

In \cite{BaIv02}, the existence of a $\mathcal{Q}$-Markov process was proved for a transition system $\mathcal{Q}$ which satisfies a symmetry condition: For all Borel sets $B_0,\dots, B_n$, the quantity
\begin{align*}
\int_{\R^{m+1}} \mathbbm{1}_{B_0}(x_0)
\prod_{i=1}^m \mathbbm{1}_{B_i}(x_i-x_{i-1})\ Q_{\bigcup_{j=0}^{i-1}A_j, \bigcup_{j=0}^{i}A_j}(x_{i-1},dx_i) \ \mu^{m(\emptyset')}(dx_0)
\end{align*}
does not depend on the choice of the consistent ordering $\{A_0=\emptyset',A_1,\dots,A_n\}$ of any lower semilattice $\mathcal{A}'$ (Theorem 1 and Assumption 1).

In Theorem \ref{th:LevyMarkov}, the existence is proved without any symmetry assumption on $\mathcal{Q}$. It relies on the construction theorem of set-indexed L\'evy processes (Theorem \ref{thRepCanon}), where the $m$-stationarity and independence of increments allow to define directly the finite dimensional distributions of the increment process $\{\Delta X_C;\;C\in\mathcal{C}\}$. 
The $\mathcal{A}$-indexed process $X$ is then the restriction of the additive process $\Delta X$ to $\mathcal{A}\subset\mathcal{C}$.

\section{Sample paths and semimartingale properties}

In this section, we study the sample paths of set-indexed L\'evy processes and we prove another characterization of set-indexed L\'evy processes as the sum of a martingale and a finite variation process.

We will not discuss here the measurability problems for sample paths of processes. Since the indexing collection $\mathcal{A}$ satisfies condition (4) ({\em Separability from above}) in Definition \ref{basic}, we assume that all our processes are separable.

\

In the real-parameter case, the fact that every L\'evy process is a semi-martingale comes from the decomposition of the process into the sum of a linear function, a Brownian motion and a pure jump process.
In some classical reference book on L\'evy processes (see \cite{Applebaum, Bertoin} for instance), the so-called L\'evy-It\^o decomposition implies the L\'evy-Khintchine representation.
In \cite{sato}, the L\'evy-Khintchine representation comes directly from infinitely divisible distributions and it is used to get the L\'evy-It\^o decomposition. Here, we follow this construction in the set-indexed setting.

\

In contrast to the real-parameter (and also multiparameter) setting, it is illusory to imagine a decomposition of the set-indexed L\'evy process in a continuous (Gaussian) part, and a pure jump (Poissonian) part. Indeed, even the set-indexed Brownian motion can be not continuous for some indexing collection (see \cite{Adler, AT}). 
In the general case, there can be many reasons for which a set-indexed function is discontinuous. However, in the special case of set-indexed L\'evy processes, a weaker form of the continuity property can be considered to study the sample paths.
Following the definition of \cite{AMSW} in the multiparameter setting, we will only consider a single type of discontinuity: the point mass jumps.

In this section, we assume that $\mathring{U}\neq\emptyset$ for all $U\in\mathcal{A}$, and that the collection $\mathcal{C}^{\ell}(\mathcal{A}_n)$ of the left-neighborhoods of $\mathcal{A}_n$ is a dissecting system (see \cite{Ivanoff}), i.e. for any $s,t\in\mathcal{T}$ with $s\neq t$, there exist $C$ and $C'$ in some $\mathcal{C}^{\ell}(\mathcal{A}_{n})$ such that $s\in C$, $t\in C'$ and $C\cap C'=\emptyset$.

\begin{definition}
The {\em point mass jump} of a set-indexed function
$x:\mathcal{A}\rightarrow\mathbf{R}$ at $t\in\mathcal{T}$ is
defined by
\begin{equation}\label{eq:ptmass}
J_t(x) = \lim_{n\rightarrow\infty} \Delta x_{C_n(t)}, \quad\textrm{where}\quad
C_n(t) = \bigcap_{\genfrac{}{}{0pt}{}{C\in\mathcal{C}_n}{t\in C}} C
\end{equation}
and for each $n\geq 1$, $\mathcal{C}_n$ denotes the collection of subsets $U\setminus V$ with $U\in\mathcal{A}_n$ and $V\in\mathcal{A}_n(u)$.
\end{definition}

Rigorously, a direct transposition of the definition of
\cite{AMSW} to the set-indexed framework should have led to $J_t(x) = \Delta x_{C(t)}$, where $\displaystyle C(t) = \bigcap_{t\in C\in\mathcal{C}} C$. 
However, since $C(t)$ is the difference between an element of $\mathcal{A}$ and a (possibly infinite) union of elements of $\mathcal{A}$, $C(t)\notin\mathcal{C}$ and $\Delta x_{C(t)}$ cannot be defined directly.

\begin{definition}[Pointwise continuity]
A set-indexed function $x:\mathcal{A}\rightarrow\mathbf{R}$ is said {\em pointwise-continuous} if $J_t(x) = 0$, for all $t\in\mathcal{T}$.
\end{definition}

\begin{theorem}\label{th:sampleGaussian}
Let $\{X_U;\;U\in\mathcal{A}\}$ be a set-indexed L\'evy process with Gaussian increments. Then for any $U_{\max}\in\mathcal{A}$ such that $m(U_{\max})<+\infty$, the sample paths of $X$ are almost surely pointwise-continuous inside $U_{\max}$, i.e.
\begin{equation*}
P( \forall t\in U_{\max}, J_t(X) = 0 ) = 1.
\end{equation*}
\end{theorem}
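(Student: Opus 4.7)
\emph{Proof plan.} The strategy is to reduce the uncountable statement ``no pointwise jump anywhere in $U_{\max}$'' to bounding, at each discretization scale $n$, the maximum of the increments over the finite partition of $U_{\max}$ into atoms, and to exploit the super-polynomial decay of Gaussian tails. Since the increments of $X$ are Gaussian, Theorem~\ref{thRepCanon} provides parameters $(\gamma,\sigma^{2})$ with $\Delta X_{C}\sim\mathcal{N}(\gamma\,m(C),\,\sigma^{2}m(C))$ for all $C\in\mathcal{C}$. A preliminary observation is that $C_{n}(t)$ coincides with the left-neighborhood $L_{n}(t)\in\mathcal{C}^{\ell}(\mathcal{A}_{n})$ containing~$t$: any $C=U\setminus V\in\mathcal{C}_{n}$ with $t\in C$ must contain the smallest $A_{t}^{(n)}\in\mathcal{A}_{n}$ enclosing~$t$ and must be disjoint from every element of $\mathcal{A}_{n}$ strictly inside $A_{t}^{(n)}$, which forces $L_{n}(t)\subseteq C$. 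Hence, as $t$ runs over $U_{\max}$, the family $\{C_{n}(t):t\in U_{\max}\}$ reduces to the finite collection $\mathcal{L}_{n}:=\{L\in\mathcal{C}^{\ell}(\mathcal{A}_{n}):L\cap U_{\max}\ne\emptyset\}$.

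Define the events $B_{n,\eta}:=\{\max_{L\in\mathcal{L}_{n}}|\Delta X_{L}|>\eta\}$. If $|J_{t}(X)(\omega)|>\delta$ for some $t\in U_{\max}$, then $|\Delta X_{L_{n}(t)}(\omega)|>\delta/2$ for all $n$ sufficiently large, which yields the deterministic inclusion $\{\exists\,t\in U_{\max}:\,|J_{t}(X)|>\delta\}\subseteq\liminf_{n}B_{n,\delta/2}$. Fatou's lemma for indicators therefore reduces the theorem to showing $P(B_{n,\delta/2})\to 0$ for every $\delta>0$. I would control this probability by the union bound together with the standard Gaussian tail $P(|\Delta X_{L}|>\delta/2)\le 2\exp(-c_{\delta}/m(L))$ (valid once $n$ is large enough that $|\gamma|\varepsilon_{n}<\delta/4$, where $\varepsilon_{n}:=\max_{L\in\mathcal{L}_{n}}m(L)$) and the elementary estimate $e^{-c/x}\le 2c^{-2}x^{2}$. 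Together they give
\[
P(B_{n,\delta/2})\;\le\;K_{\delta}\sum_{L\in\mathcal{L}_{n}}m(L)^{2}\;\le\;K_{\delta}\,\varepsilon_{n}\sum_{L\in\mathcal{L}_{n}}m(L)\;\le\;K_{\delta}\,\varepsilon_{n}\,m(B_{n_{0}}),
\]
where $n_{0}$ is chosen so that $U_{\max}\subseteq B_{n_{0}}^{\circ}$, which exists by Definition~\ref{basic}(\ref{IC:C1}) and compactness of $U_{\max}$.

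The main obstacle is then to establish $\varepsilon_{n}\to 0$, i.e.\ uniform-in-$t$ shrinkage of the $m$-measure of the atoms covering~$U_{\max}$. For each fixed~$t$, the dissecting property of $\mathcal{C}^{\ell}(\mathcal{A}_{n})$ forces $\bigcap_{n}L_{n}(t)=\{t\}$, so continuity of the Radon measure~$m$ (with $m(\{t\})=0$) gives $m(L_{n}(t))\to 0$ pointwise; this has to be upgraded to a uniform statement on the compact set $U_{\max}$ by combining the monotonicity $L_{n+1}(t)\subseteq L_{n}(t)$ with the separability-from-above axiom, which ensures that the refinements of $\mathcal{A}_{n}$ eventually dominate the finitely many atoms of large $m$-measure. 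Once $\varepsilon_{n}\to 0$ is secured, $P(B_{n,\delta/2})\to 0$ for every $\delta>0$, and a countable union over $\delta=1/k$ yields $P(\forall\,t\in U_{\max},\,J_{t}(X)=0)=1$, completing the proof.
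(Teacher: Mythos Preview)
Your argument is correct and follows essentially the same route as the paper: identify $C_n(t)$ with the left-neighborhood in $\mathcal{C}^{\ell}(\mathcal{A}_n)$ containing $t$, apply a union bound over this finite family using Gaussian tail control of order $m(L)^2$, then sum via disjointness of the left-neighborhoods together with $\varepsilon_n=\sup_L m(L)\to 0$. The paper differs only cosmetically---it uses Markov's inequality on $2p$-th moments rather than your exponential-tail-to-polynomial conversion, and it concludes via Borel--Cantelli along a subsequence rather than your Fatou/$\liminf$ argument---and it sidesteps your bound $\sum_{L\in\mathcal{L}_n} m(L)\le m(B_{n_0})$ (which is not quite right as written, since left-neighborhoods of $\mathcal{A}_n$ meeting $U_{\max}$ need not lie in $B_{n_0}$) by restricting the indexing collection to $\{U\cap U_{\max}:U\in\mathcal{A}\}$ at the outset, so that $\sum_L m(L)=m(U_{\max})$ directly.
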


\begin{proof}
We will consider here that for all $U\in\mathcal{A}$, we have $U\subset U_{\max}$ (it suffices to restrict the indexing collection to $\{ U\cap U_{\max}, U\in\mathcal{A} \}$).

Let us consider $S_n = \sup\{ \left|\Delta X_{C_n(t)}\right|; t\in
U_{\max} \}$, where $C_n(t)$ is defined in (\ref{eq:ptmass}).
Notice that since $\mathcal{C}_n$ is closed under intersections,
the supremum is taken over ``indivisible" elements of
$\mathcal{C}_n$. These elements constitute precisely the
collection $\mathcal{C}^{\ell}(\mathcal{A}_n)$ of the
left-neighborhoods of $\mathcal{A}_n$ (see \cite{Ivanoff}). Then
the quantity $S_n$ can be rewritten as
\begin{equation*}
S_n = \sup\{ \left|\Delta X_{C}\right|; C\in \mathcal{C}^{\ell}(\mathcal{A}_n) \}.
\end{equation*}
Since $\mathcal{C}^{\ell}(\mathcal{A}_n)$ is a dissecting system
(see \cite{Ivanoff} or \cite{Censoring}) and  the measure $m$ does
not charge points, we remark that
\begin{equation}\label{eq:cvleftneighbor}
\sup_{C\in \mathcal{C}^{\ell}(\mathcal{A}_n)} m(C) \rightarrow 0
\quad\textrm{as}\quad
n\rightarrow +\infty.
\end{equation}

\noindent
For any fixed $\epsilon >0$,
\begin{align}\label{eq:majSn}
P(S_n>\epsilon) = P\left( \bigcup_{C\in \mathcal{C}^{\ell}(\mathcal{A}_n)} \left\{ \left|\Delta X_C\right| > \epsilon \right\} \right)
\leq \sum_{C\in \mathcal{C}^{\ell}(\mathcal{A}_n)} P( \left|\Delta X_C\right| > \epsilon ).
\end{align}
By hypothesis, $\Delta X_C$ is a Gaussian random variable for all
$C\in \mathcal{C}^{\ell}(\mathcal{A}_n)$. Then the
L\'evy-Khintchine characterization gives
\begin{equation*}
\forall z\in\mathbf{R},\quad
E\left[ e^{i z \Delta X_C} \right] = \exp\left\{ m(C) \left[ -\frac{1}{2}\sigma^2 z^2 + i \gamma z \right] \right\},
\end{equation*}
and therefore,
\begin{equation*}
E[ \Delta X_C ] = \gamma. m(C), \quad \var( \Delta X_C ) = \sigma^2. m(C).
\end{equation*}
Hence, for all integer $p\geq 1$, there exists a real constant $C_p>0$ such that
\begin{align*}
P\left( \left| \Delta X_C - E[ \Delta X_C ] \right| > \epsilon/2 \right)
\leq C_p\ \frac{[ \var(\Delta X_C) ]^p}{(\epsilon/2)^{2p}}
= C_p\ \frac{\sigma^{2p}}{(\epsilon/2)^{2p}}\ [ m(C) ]^p,
\end{align*}
and thus
\begin{align*}
P\left( \left| \Delta X_C \right| > \epsilon/2 + \left| E[ \Delta X_C ] \right| \right)
\leq C_p\ \frac{\sigma^{2p}}{(\epsilon/2)^{2p}}\ [ m(C) ]^p.
\end{align*}
{}From (\ref{eq:cvleftneighbor}), $| E[\Delta X_C] | < \epsilon/2$
for $n$ sufficiently great and then
\begin{align}\label{eq:majdeltagauss}
P\left( \left| \Delta X_C \right| > \epsilon \right)
\leq C_p\ \frac{\sigma^{2p}}{(\epsilon/2)^{2p}}\ [ m(C) ]^p.
\end{align}
{}From (\ref{eq:majSn}) and (\ref{eq:majdeltagauss}), we get
\begin{align*}
P(S_n>\epsilon) &\leq C_p\ \frac{\sigma^{2p}}{(\epsilon/2)^{2p}}
\sum_{C\in \mathcal{C}^{\ell}(\mathcal{A}_n)} [ m(C) ]^p \\
&\leq C_p\ \frac{\sigma^{2p}}{(\epsilon/2)^{2p}} \Bigg( \sum_{C\in
\mathcal{C}^{\ell}(\mathcal{A}_n)} m(C) \Bigg)
\sup_{C\in \mathcal{C}^{\ell}(\mathcal{A}_n)} [m(C)]^{p-1} \\
&\leq C_p\ \frac{\sigma^{2p}}{(\epsilon/2)^{2p}}\ m(U_{\max}) \
\sup_{C\in \mathcal{C}^{\ell}(\mathcal{A}_n)} [m(C)]^{p-1},
\end{align*}
using the fact that the left-neighborhoods are disjoints (see \cite{Censoring}).

\noindent From (\ref{eq:cvleftneighbor}), let us consider an extracting function $\varphi:\N\rightarrow\N$ such that
$$\sup_{C\in \mathcal{C}^{\ell}(\mathcal{A}_n)} m(C) \leq 2^{-n}$$ and
take $p=2$ in the previous inequality. The Borel-Cantelli Lemma
implies that $S_n$ converges to $0$ almost surely as
$n\rightarrow\infty$.
\end{proof}

Let notice that Theorem \ref{th:sampleGaussian} implies that set-indexed Brownian motion is almost surely pointwise-continuous for any indexed collection (even for a collection which makes it not continuous).


In the sequel, we study the point mass jumps of a set-indexed L\'evy process and we prove that they determine the L\'evy measure of the process.

\

\noindent Following \cite{Censoring}, we consider
$ \displaystyle A_t = \bigcap_{t\in U\in\mathcal{A}} U$ for all $t\in\mathcal{T}$, and the partial order of $\mathcal{T}$ defined by
$$ \forall s,t\in\mathcal{T},\quad s \preccurlyeq t \Leftrightarrow A_s\subseteq A_t.$$
Obviously, we can write $A_t = \{ s\in\mathcal{T}: s\preccurlyeq t \}$ and it can be proved that $ \left[s = t \Leftrightarrow A_s = A_t\right]$. This implies
$$ \forall s,t\in\mathcal{T},\quad s \prec t \Leftrightarrow A_s\subset A_t.$$
For all $a,b\in\mathcal{T}$, we define the intervals
\begin{align*}
[a,b] &= \{t\in\mathcal{T}: a\preccurlyeq t\preccurlyeq b\}\quad\textrm{and}\\
(a,b) &= \{t\in\mathcal{T}: a\prec t\prec b\} = [a,b] \setminus\{a,b\}.
\end{align*}

\

\begin{definition}
A set-indexed function $x:\mathcal{A}\rightarrow\R$ is said to satisfy the $\mathcal{C}(u)$-ILOL property (Inner Limits and Outer Limits), if it admits an extension $\Delta x$ on $\mathcal{C}(u)$ for which for any $t\in\mathcal{T}$, there exist two real numbers $\underline{L}$ and $\overline{L}$ such that:

\noindent $\forall \epsilon>0$, there exist $\delta_t>0$ and $\eta_t>0$ such that
\begin{equation}\label{def:IL}
\forall V\in\mathcal{C}(u)\textrm{ with } V\subset A_t\setminus \{t\},\quad
m(A_t\setminus V) < \delta_t \Rightarrow \left| \Delta x_V - \underline{L} \right| < \epsilon,
\end{equation}
and
\begin{equation}\label{def:OL}
\forall W\in\mathcal{C}(u)\textrm{ with } A_t\subset W,\quad
m(W\setminus A_t) < \eta_t \Rightarrow \left| \Delta x_W - \overline{L} \right| < \epsilon.
\end{equation}
We denote $\Delta x_{A_t-} = \underline{L}$ and $\Delta x_{A_t+} = \overline{L}$.
\end{definition}

In the sequel, we will consider set-indexed L\'evy processes whose sample paths satisfy the $\mathcal{C}(u)$-ILOL property. We study their point mass jumps and we prove that they admit a L\'evy-It\^o decomposition.

By $L^2$-continuity, the sample paths of the set-indexed Brownian motion satisfy the $\mathcal{C}(u)$-ILOL property almost surely.
Since the compound Poisson process only jumps on single points, we deduce that it also satisfies the $\mathcal{C}(u)$-ILOL property.

\begin{proposition}\label{prop:jumps}
Any set-indexed function $x:\mathcal{A}\rightarrow\R$ satisfying the $\mathcal{C}(u)$-ILOL property admits point mass jumps at every point, i.e. $J_t(x)$ is defined for all $t\in\mathcal{T}$.
Moreover, for any $\epsilon >0$ and any $U_{\max}\in\mathcal{A}$, the number of points $t\in U_{\max}$ such that $\left|J_t(x)\right| > \epsilon$ is finite.
\end{proposition}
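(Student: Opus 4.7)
The plan is to reduce both assertions to the one-sided limits furnished by the $\mathcal{C}(u)$-ILOL hypothesis, by unpacking $C_n(t)$ as an inclusion--exclusion difference of sets whose measures relative to $A_t$ are controlled via separability from above (Definition \ref{basic}~(4)) and the dissecting assumption on $\mathcal{C}^\ell(\mathcal{A}_n)$. Writing the left-neighbourhood $C_n(t)\in\mathcal{C}^\ell(\mathcal{A}_n)$ as $U_n(t)\setminus V_n(t)$, where $U_n(t)=\bigcap\{A\in\mathcal{A}_n:t\in A\}\in\mathcal{A}_n$ is the smallest $\mathcal{A}_n$-set containing $t$ and $V_n(t)=\bigcup\{A\in\mathcal{A}_n:A\subsetneq U_n(t)\}\in\mathcal{A}_n(u)$ excludes $t$, inclusion--exclusion yields $\Delta x_{C_n(t)}=\Delta x_{U_n(t)}-\Delta x_{U_n(t)\cap V_n(t)}$. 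Separability from above together with the dissecting hypothesis forces $U_n(t)\supseteq A_t$ with $m(U_n(t)\setminus A_t)\to 0$, and symmetrically $U_n(t)\cap V_n(t)\subseteq A_t\setminus\{t\}$ with $m(A_t\setminus(U_n(t)\cap V_n(t)))\to 0$. Applying the outer condition (\ref{def:OL}) at $t$ to $U_n(t)\in\mathcal{C}(u)$ and the inner condition (\ref{def:IL}) to $U_n(t)\cap V_n(t)\in\mathcal{C}(u)$, I obtain $\Delta x_{U_n(t)}\to\Delta x_{A_t+}$ and $\Delta x_{U_n(t)\cap V_n(t)}\to\Delta x_{A_t-}$, hence $J_t(x)=\Delta x_{A_t+}-\Delta x_{A_t-}$ is well defined.

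For the finiteness statement I would argue by contradiction. Assume there exist infinitely many distinct $t_k\in U_{\max}$ with $|J_{t_k}(x)|>\epsilon$; compactness of $U_{\max}$ provides $t_k\to t^*\in U_{\max}$. Fix tolerance $\epsilon/4$ in the ILOL property at $t^*$, yielding thresholds $\delta,\eta>0$. For each large $k$ choose $n_k$ so that $|\Delta x_{C_{n_k}(t_k)}|>\epsilon/2$ and $m(C_{n_k}(t_k))$ is arbitrarily small. The crux is then to place both $U_{n_k}(t_k)$ and $U_{n_k}(t_k)\cap V_{n_k}(t_k)$ inside the \emph{same} ILOL window at $t^*$ (the outer window if $U_{n_k}(t_k)\supseteq A_{t^*}$, the inner one if contained in $A_{t^*}\setminus\{t^*\}$). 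This would force both $\Delta x$-values within $\epsilon/4$ of a common target ($\Delta x_{A_{t^*}+}$ or $\Delta x_{A_{t^*}-}$), so $|\Delta x_{C_{n_k}(t_k)}|<\epsilon/2$, a contradiction.

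The main obstacle is precisely this placement into a common ILOL window, since a priori $A_{t_k}$ need not be comparable with $A_{t^*}$. I would pass to further subsequences distinguishing the cases $t_k\prec t^*$, $t^*\prec t_k$, and $t_k$ incomparable with $t^*$. In the first two cases the monotonicity of $t\mapsto A_t$ under $\preccurlyeq$ immediately produces approximants nested with $A_{t^*}$, directly usable in the appropriate ILOL condition. The incomparable case relies on the Painlev\'e--Kuratowski/Fell continuity of $t\mapsto A_t$ alluded to after Definition \ref{basic}, together with $m$ being Radon and not charging points, to conclude $m(A_{t_k}\,\triangle\, A_{t^*})\to 0$; one then replaces $U_{n_k}(t_k)$ and $U_{n_k}(t_k)\cap V_{n_k}(t_k)$ by their intersection or union with $A_{t^*}$, within $\mathcal{C}(u)$, so as to fit the ILOL windows modulo an arbitrarily small $m$-measure error and thereby close the contradiction.
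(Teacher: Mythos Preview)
Your proof has two genuine gaps, one in each part.

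\medskip

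\textbf{Existence of $J_t(x)$.} Your decomposition $C_n(t)=U_n(t)\setminus V_n(t)$ is correct, but the claim $U_n(t)\cap V_n(t)=V_n(t)\subseteq A_t\setminus\{t\}$ is false. Take $\mathcal{T}=[0,1]^2$, $\mathcal{A}$ the rectangles $[0,s]$, $\mathcal{A}_n$ the dyadic ones of level $n$, and $t=(0.3,0.3)$. Then $U_2(t)=[0,0.5]^2$, $A_t=[0,0.3]^2$, and $V_2(t)\supseteq[0,0.25]\times[0,0.5]$, which is not contained in $A_t$. Since the inner condition (\ref{def:IL}) requires $V\subset A_t\setminus\{t\}$, you cannot apply it to $V_n(t)$. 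The paper sidesteps this by working with $V=A_t\setminus C_n(t)$, which is automatically inside $A_t\setminus\{t\}$ and satisfies $m(A_t\setminus V)=m(A_t\cap C_n(t))\to 0$; the outer piece is $A_t\cup C_n(t)$. Your target formula $J_t(x)=\Delta x_{A_t+}-\Delta x_{A_t-}$ is right, but it must be reached through these nested sets, not through $U_n(t),V_n(t)$.

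\medskip

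\textbf{Finiteness of large jumps.} The paper does \emph{not} argue by sequential contradiction. It proves a uniform oscillation bound: for each $t$ it produces $a_t\prec t\prec b_t$ with $w_x((a_t,t))<\epsilon$ and $w_x\big((a_t,b_t)\setminus(a_t,t]\big)<\epsilon$, where $w_x(C)=\sup_{C'\subseteq C}|\Delta x_{C'}|$, and then extracts a finite subcover of $U_{\max}$; large jumps can only sit at the finitely many points $a_{t_i},t_i,b_{t_i}$. The point is that the ILOL property, applied once at $t$, controls $\Delta x_{C'}$ for \emph{every} $C'$ contained in the half-interval, via the additivity trick $\Delta x_{V_t\cup C'}=\Delta x_{V_t}+\Delta x_{C'}$.

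Your sequential route breaks precisely where you flag it. In the incomparable case you propose to replace $U_{n_k}(t_k)$ and $V_{n_k}(t_k)$ by their union or intersection with $A_{t^*}$ ``modulo an arbitrarily small $m$-measure error''. But ILOL gives no modulus of continuity of $\Delta x$ with respect to $m$: it bounds $\Delta x_W$ only for $W$ \emph{nested} with $A_{t^*}$ and $m$-close to it, and says nothing about $|\Delta x_W-\Delta x_{W'}|$ when $m(W\triangle W')$ is small. Concretely, if $W=U_{n_k}(t_k)\cup A_{t^*}$ lies in the outer window, you learn $\Delta x_W\approx\Delta x_{A_{t^*}+}$, but $\Delta x_W-\Delta x_{U_{n_k}(t_k)}=\Delta x_{A_{t^*}\setminus U_{n_k}(t_k)}$ is uncontrolled. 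The appeal to Fell continuity of $t\mapsto A_t$ does not help here, and in any case is not established in the paper (the remark after Definition~\ref{deflevy} concerns the topology used for stochastic continuity on $\mathcal{A}$, not continuity of $t\mapsto A_t$). The covering argument is what replaces this missing uniformity.
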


\begin{proof}
For any $t\in\mathcal{T}$, condition (\ref{def:IL}) of the $\mathcal{C}(u)$-ILOL property with $V=A_t \setminus C_n(t)$ implies that for all $\epsilon>0$, there exists $\delta_t>0$ such that
\begin{equation*}
m(C_n(t)\cap A_t) < \delta_t \Rightarrow \left| \Delta x_{C_n(t)} - \Delta x_{A_t-} \right| <\epsilon.
\end{equation*}
Since the collection
$\mathcal{C}^{\ell}\left(\mathcal{A}_n\right)$ is a dissecting
system and that the measure $m$ does not charge points,
$m(C_n(t))$ converges to $0$ as $n$ goes to $\infty$. Then $\Delta
x_{C_n(t)}$ tends to $\Delta x_{A_t-}$ as as $n$ goes to $\infty$
and $J_t(x)$ is well-defined.

\

\noindent Let us define the oscillation of $x$ in $C\in\mathcal{C}$
$$ w_x(C) = \sup_{C' \subseteq C}\left| \Delta x(C') \right|.$$
As in the proof of Theorem \ref{th:sampleGaussian}, we can assume that all $U\in\mathcal{A}$ is included in $U_{\max}$.

\

\noindent For any given $\epsilon > 0$, we will show that $U_{\max}$ can be covered such a way
\begin{equation*}
U_{\max} \subset \bigcup_{1\leq i\leq k} (a_i,b_i),
\quad \textrm{with } t_i\in\mathring{\widehat{(a_i,b_i)}},
\end{equation*}
such that $w_x((a_i,t_i)) < \epsilon$ and $w_x((a_i,b_i) \setminus (a_i,t_i]) < \epsilon$.
This assertion implies that the only points of $U_{\max}$ where point mass jump can be bigger than $\epsilon$ are the $a_i$'s, $t_i$'s and $b_i$'s. Therefore their number is finite and the result follows.

\

\noindent For all $t\in U_{\max}$, the $\mathcal{C}(u)$-ILOL property implies
the existence of $\delta_t>0$ and $\eta_t>0$ such that
$$ \forall V\in\mathcal{C}(u)\textrm{ s.t. } V\subset A_t\setminus \{t\},\quad
m(A_t\setminus V) < \delta_t \Rightarrow \left| \Delta x_V - \Delta x_{A_t-} \right| < \epsilon/2,$$
and
$$ \forall W\in\mathcal{C}(u)\textrm{ s.t. } A_t\subset W,\quad
m(W\setminus A_t) < \eta_t \Rightarrow \left| \Delta x_W - \Delta x_{A_t+} \right| < \epsilon/2.$$

\noindent There exist $V_t=\{u\in\mathcal{T}: u\preccurlyeq a_t\}$ and $W_t=\{u\in\mathcal{T}: u\preccurlyeq b_t\}$ in $\mathcal{A}$ such that $V_t\subset \mathring{A_t}$ with $m(A_t\setminus V_t)<\delta_t$ and $A_t\subset \mathring{W_t}$ with $m(W_t\setminus A_t)<\eta_t$.
Since $t\in\mathring{\widehat{(a_t,b_t)}}$, a compacity argument implies
\begin{align*}
U_{\max} \subset \bigcup_{1\leq i\leq k} \mathring{\widehat{(a_{t_i},b_{t_i})}}
.
\end{align*}
For each $i=1,\dots,k$, we split the interval $(a_{t_i},b_{t_i})$ in $(a_{t_i},t_i]\cup ((a_{t_i},b_{t_i})\setminus (a_{t_i},t_i])$,
\begin{itemize}
\item For any $C\in\mathcal{C}$ such that $C\subseteq (a_{t_i},t_i)$, we have $V_{t_i}\cup C\subset A_{t_i}\setminus\{t_i\}$, $V_{t_i}\cap C = \emptyset$
and $m\left(A_{t_i}\setminus (V_{t_i}\cup C)\right)<\delta_{t_i}$. \\
Then $\Delta x_{V_{t_i}\cup C} = \Delta x_{V_{t_i}}+\Delta x_{C}$ and
$\left| \Delta x_{C} \right| < \epsilon/2 + \left| \Delta x_{V_{t_i}} - \Delta x_{A_{t_i}-} \right| <\epsilon$.
This implies $w_x((a_{t_i},t_i)) < \epsilon$.

\item For any $C\in\mathcal{C}$ such that $C\subseteq (a_{t_i},b_{t_i})\setminus (a_{t_i},t_i]$, we have $C\subset W_{t_i}$, $A_{t_i}\subset W_{t_i}\setminus C$ and $m\left( (W_{t_i} \setminus C)\setminus A_{t_i} \right) < \eta_{t_i}$. \\
Then $\Delta x_{W_{t_i} \setminus C} = \Delta x_{W_{t_i}} - \Delta x_{C}$ and
$\left| \Delta x_{C} \right| < \epsilon/2 + \left| \Delta x_{W_{t_i}} - \Delta x_{A_{t_i}+} \right| <\epsilon$.
This implies $w_x((a_i,b_i) \setminus (a_i,t_i]) < \epsilon$.
\end{itemize}
\end{proof}

\begin{remark}
\noindent The proof of Proposition \ref{prop:jumps} shows that condition ``$V\subset A_t\setminus\{t\}$'' in $(\ref{def:IL})$ of $\mathcal{C}(u)$-ILOL property is essential to authorize a positive point mass jump at $t$.
If this condition is substituted with ``$V\subset A_t$'',
for any $C\in\mathcal{C}$ with $C\subset (a_{t},b_{t})$, where $a_t$ and $b_t$ are defined in the proof, we have
\begin{itemize}
\item $C\cap A_t \subset A_t\setminus V_t$ and $m\left(A_t\setminus (V_t\cup (C\cap A_t))\right)<\delta_t$. \\
Then $\Delta x_{V_t\cup (C\cap A_t)} = \Delta x_{V_t}+\Delta x_{C\cap A_t}$ and
$\left| \Delta x_{V_t} + \Delta x_{C\cap A_t} - \Delta x_{A_t-} \right| <\epsilon/2$.

\item $C\setminus A_t \subset W_t\setminus A_t$ and $m\left( (W_t \setminus (C\setminus A_t)) \setminus A_t \right) < \eta_t$. \\
Then $\Delta x_{W_t \setminus (C\setminus A_t))} = \Delta x_{W_t} - \Delta x_{C\setminus A_t}$ and
$\left| \Delta x_{W_t} - \Delta x_{C\setminus A_t} - \Delta x_{A_t+} \right| <\epsilon/2$.
\end{itemize}
Since $\Delta x_C = \Delta x_{C\cap A_t} + \Delta x_{C\setminus A_t}$, we get
\begin{align*}
\left| \Delta x_C \right| < \epsilon + \left| \Delta x_{V_t} -  \Delta x_{A_t-} \right| + \left| \Delta x_{W_t} -  \Delta x_{A_t+} \right| < 2\epsilon.
\end{align*}
Therefore, $w_x((a_t, b_t)) < 2\epsilon$ for all $\epsilon>0$, and consequently $J_t(x)=0$.
\end{remark}

As in the classical case of real parameter L\'evy processes, we consider the $\sigma$-field $\mathcal{B}_{\epsilon}$, generated by the opened subsets of $\{x\in\R: |x|>\epsilon\}$.\\
Let $X=\{X_U;\;U\in\mathcal{A}\}$ be a set-indexed L\'evy process whose sample paths satisfy the $\mathcal{C}(u)$-ILOL property, and $U_{\max}\in\mathcal{A}$.
Recall that the conditions on the L\'evy measure $\nu$ of $X$  implies that $\nu(B)<+\infty$ for all $B\in\mathcal{B}_{\epsilon}$.\\
According to Proposition \ref{prop:jumps}, the $\mathcal{C}(u)$-ILOL property insures that the number of point mass jumps inside $\mathcal{B}_{\epsilon}$ is finite. 
Then, we can define, for all $U\in\mathcal{A}$ with $U\subset U_{\max}$, 
\begin{align}\label{eq:NU}
N_U(B) &= \#\left\{ t\in U: J_t(X) \in B \right\}, \\
X^B_U &= \int_B x.N_U(dx) \label{eq:XB},
\end{align}
for all $B\in\mathcal{B}_{\epsilon}$, as in the multidimensional case studied in \cite{AMSW}.

\noindent We are now able to progress towards the decomposition of a set-indexed L\'evy process into a part with no point mass jumps and a Poissonian part which leans on the points where the sample path jumps.
The next step is the expression of the L\'evy measure $\nu$, which comes from the L\'evy-Khintchine representation, in terms of the counting measure of point mass jumps $N_U$.

\noindent The following Lemma \ref{lem:NU-Poisson} and Proposition \ref{prop:Poisson} can be proved by minor adaptations of analogous results proved in the multidimensional case (Propositions~4.3, 4.4 and 4.5 in \cite{AMSW}) to the set-indexed framework.
They rely on the approximation of $U\in\mathcal{A}$ by unions of elements of $\mathcal{C}^{\ell}(\mathcal{A}_n)$.

\begin{lemma}\label{lem:NU-Poisson}
For all $U\in\mathcal{A}$ with $U\subset U_{\max}$ and all $B\in\mathcal{B}_{\epsilon}$, $N_U(B)$ and $X^B_U$ are random variables.
\end{lemma}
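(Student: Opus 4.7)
The plan is to exhibit $N_U(B)$ and $X_U^B$ as almost-sure limits of random variables built from the finitely many increments $\{\Delta X_C : C\in\mathcal{C}^{\ell}(\mathcal{A}_n)\}$. For each $n$, set
\begin{equation*}
N_U^{(n)}(B) = \sum_{\genfrac{}{}{0pt}{}{C\in\mathcal{C}^{\ell}(\mathcal{A}_n)}{C\subset g_n(U)}} \mathbbm{1}_B(\Delta X_C), \qquad Y_U^{(n)}(B) = \sum_{\genfrac{}{}{0pt}{}{C\in\mathcal{C}^{\ell}(\mathcal{A}_n)}{C\subset g_n(U)}} \Delta X_C\, \mathbbm{1}_B(\Delta X_C).
\end{equation*}
Since $\mathcal{A}_n$ is finite, both are finite sums of Borel functions of random variables, hence random variables themselves.

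To finish, I would show that $N_U^{(n)}(B)\to N_U(B)$ and $Y_U^{(n)}(B)\to X_U^B$ almost surely. This hinges on two ingredients. First, by Proposition \ref{prop:jumps}, almost every sample path of $X$ has only finitely many jump locations $t_1,\dots,t_k\in U_{\max}$ with $|J_{t_i}(X)|>\epsilon$, and since $B\subset\{|x|>\epsilon\}$ no other point can contribute to $N_U(B)$ or $X_U^B$. Second, because $\mathcal{C}^{\ell}(\mathcal{A}_n)$ is a dissecting system whose mesh shrinks to $0$, for $n$ large the $t_i$'s occupy pairwise disjoint left-neighbourhoods $C_n(t_i)$ and $\Delta X_{C_n(t_i)}\to J_{t_i}(X)$ by the very definition of the point-mass jump; simultaneously, the $\mathcal{C}(u)$-ILOL property, via the oscillation-covering argument already carried out in the proof of Proposition \ref{prop:jumps}, forces $|\Delta X_C|<\epsilon$ for every remaining left-neighbourhood $C$ eventually, so such $C$'s make no contribution to either sum. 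A Portmanteau-type argument then yields the convergence whenever $B$ is a continuity set of the a.s.\ atomic measure $N_U$, and the general case follows by a monotone-class argument using the $\sigma$-additivity of $N_U(\cdot)$ in its set-argument.

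The main obstacle is precisely the passage from continuity sets to arbitrary Borel sets in $\mathcal{B}_\epsilon$: when $J_{t_i}(X)$ sits on $\partial B$, the indicator $\mathbbm{1}_B(\Delta X_{C_n(t_i)})$ need not converge to $\mathbbm{1}_B(J_{t_i}(X))$. One handles this by first establishing measurability for $B$ that is open, using lower semicontinuity of $\mathbbm{1}_B$, and then invoking monotone classes. A secondary nuisance is the use of the outer approximation $g_n(U)\supset U$: a jump point sitting in $g_n(U)\setminus U$ might momentarily inflate $N_U^{(n)}(B)$, but since $U=\bigcap_n g_n(U)$ any such point eventually leaves $g_n(U)$, and being one of only finitely many jump points it is harmless for $n$ large enough.
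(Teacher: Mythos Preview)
Your approach is essentially the one the paper indicates: the paper does not give a self-contained proof but simply refers to Propositions~4.3--4.5 of \cite{AMSW}, stating that the argument ``rel[ies] on the approximation of $U\in\mathcal{A}$ by unions of elements of $\mathcal{C}^{\ell}(\mathcal{A}_n)$'', which is exactly your construction of $N_U^{(n)}(B)$ and $Y_U^{(n)}(B)$ as finite sums over left-neighbourhoods and passage to the limit via Proposition~\ref{prop:jumps}. Your handling of the two technical points (jump values on $\partial B$ via open sets and monotone classes, and spurious contributions from $g_n(U)\setminus U$) is appropriate and in line with what such an adaptation requires.
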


\noindent The following result is a consequence of Theorem \ref{thRepCanon} and the L\'evy-Khinchine formula for set-indexed L\'evy processes. 

\begin{proposition}\label{prop:Poisson}
\begin{enumerate}[(i)]
\item For all $B\in\mathcal{B}_{\epsilon}$, $\{N_U(B);\; U\in\mathcal{A}, U\subset U_{\max}\}$ is a set-indexed homogeneous Poisson process, with mean measure given by
\begin{equation*}
E\left[N_U(B)\right] = m(U)\ \nu(B),
\end{equation*}
where $\nu$ denotes the L\'evy measure of $X$.

\noindent Moreover, if $B_1, \dots, B_n$ are pairwise disjoint elements of $\mathcal{B}_{\epsilon}$, then the processes $\{N_U(B_1);\;U\in\mathcal{A}, U\subset U_{\max}\}$,\dots, $\{N_U(B_n);\;U\in\mathcal{A}, U\subset U_{\max}\}$ are independent.

\

\item For all $B\in\mathcal{B}_{\epsilon}$, $\{X^B_U;\;U\in\mathcal{A},U\subset U_{\max}\}$ is a set-indexed compound Poisson process such that
\begin{equation*}
\log E\left[ e^{i z X^B_U} \right] = m(U)\ \int_B \left[ e^{i z x} - 1 \right].\nu(dx),
\end{equation*}
where $\nu$ denotes the L\'evy measure of $X$.

\noindent Moreover, if $B_1, \dots, B_n$ are pairwise disjoint elements of $\mathcal{B}_{\epsilon}$ and $B=\bigcup_{1\leq j\leq n}B_j$, then the processes $\{X^{B_1}_U;\;U\in\mathcal{A}, U\subset U_{\max}\}$,\dots, $\{X^{B_n}_U;\;U\in\mathcal{A}, U\subset U_{\max}\}$ and $\{X_U -X^B_U;\;U\in\mathcal{A}, U\subset U_{\max}\}$ are independent.
\end{enumerate}
\end{proposition}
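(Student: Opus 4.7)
The plan is to verify both parts by a partition approximation based on the left-neighbourhoods $\mathcal{C}^{\ell}(\mathcal{A}_n)$, transferring the classical Poissonisation arguments of \cite{AMSW} into the set-indexed framework supplied by Theorem \ref{thRepCanon} and the L\'evy-Khintchine formula \ref{eq:LK}. Throughout, fix $B\in\mathcal{B}_{\epsilon}$ and $U\in\mathcal{A}$ with $U\subset U_{\max}$.

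For $(i)$, Proposition \ref{prop:jumps} ensures that the set $\{t\in U_{\max}:|J_t(X)|>\epsilon\}$ is finite, so for $n$ large enough each $C\in\mathcal{C}^{\ell}(\mathcal{A}_n)$ contains at most one such point, and the $\mathcal{C}(u)$-ILOL property forces $\Delta X_C$ to lie arbitrarily close to $J_t(X)$ whenever the unique relevant jump point $t$ belongs to $C$. I would use this to obtain the almost sure representation
\begin{equation*}
N_U(B) \;=\; \lim_{n\to\infty}\ \sum_{\substack{C\in\mathcal{C}^{\ell}(\mathcal{A}_n)\\ C\subset U}} \mathbbm{1}_{\{\Delta X_C\in B\}}.
\end{equation*}
The indicators attached to disjoint left-neighbourhoods are independent by Definition \ref{deflevy}$(2)$, and each has law depending only on $m(C)$ by Theorem \ref{thRepCanon}$(ii)$. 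From \ref{eq:LK} one reads the standard asymptotic $P(\Delta X_C\in B)=\nu(B)\,m(C)+o(m(C))$ as $m(C)\to 0$, so the classical Poisson limit theorem for triangular arrays of rare independent Bernoulli variables identifies the limit as a Poisson random variable with parameter $m(U)\nu(B)$. The joint independence of $\{N_U(B_j)\}_{j=1}^n$ for pairwise disjoint $B_j$ follows because within each $C$ the events $\{\Delta X_C\in B_j\}$ are mutually exclusive, so the joint partial sums are multinomial, whose Poisson limit splits into independent coordinates. Finite additivity of $U\mapsto N_U(B)$ and independence across disjoint $U$'s are inherited from the construction.

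For $(ii)$, the same discretisation gives
\begin{equation*}
X^B_U \;=\; \lim_{n\to\infty}\sum_{\substack{C\in\mathcal{C}^{\ell}(\mathcal{A}_n)\\ C\subset U}} \Delta X_C\,\mathbbm{1}_{\{\Delta X_C\in B\}},
\end{equation*}
and independence of the summands yields
\begin{equation*}
E\bigl[e^{iz X^B_U}\bigr] \;=\; \lim_{n\to\infty}\prod_{C\subset U}\Bigl(1 + \int_B\!(e^{izx}-1)\,P(\Delta X_C\in dx)\Bigr).
\end{equation*}
Plugging in the L\'evy-Khintchine asymptotic $P(\Delta X_C\in dx)\mathbbm{1}_B(x) = m(C)\,\nu(dx)\mathbbm{1}_B(x) + o(m(C))$ and taking logarithms gives $\log E[e^{izX^B_U}] = m(U)\int_B(e^{izx}-1)\,\nu(dx)$, which matches the characteristic function \ref{eq:compound_charact} of a set-indexed compound Poisson process with intensity $\nu(B)$ and jump law $\mathbbm{1}_B(x)\nu(dx)/\nu(B)$.

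The joint independence statement in $(ii)$ would be extracted at the level of each sufficiently fine left-neighbourhood $C$: the events $\{\Delta X_C\in B_1\},\dots,\{\Delta X_C\in B_n\},\{\Delta X_C\notin B\}$ form a partition of $\Omega$, and conditioning on which of them occurs decouples the contributions of $\Delta X_C$ to $X^{B_1}, \dots, X^{B_n}$ and to $X-X^B$; the independence of increments across disjoint $C$'s then produces a product formula for the joint characteristic function that survives the passage to the limit. The main obstacle will be justifying the limit that defines $X_U-X^B_U$: one must verify that $\sum_C \Delta X_C\,\mathbbm{1}_{\{\Delta X_C\notin B\}}$ converges in probability to $X_U-X^B_U$, which requires both the $\mathcal{C}(u)$-ILOL hypothesis (to handle the finitely many jumps of size larger than $\epsilon$) and the $L^2$-estimate $\var(\Delta X_C\,\mathbbm{1}_{\{\Delta X_C\notin B\}})=O(m(C))$ coming from the small-jump part of $\nu$. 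This is exactly the content of Propositions~4.3--4.5 of \cite{AMSW}, and it transfers essentially verbatim once the dissecting property of $\mathcal{C}^{\ell}(\mathcal{A}_n)$ and the $m$-stationarity from Theorem \ref{thRepCanon}$(ii)$ are invoked.
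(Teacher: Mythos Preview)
Your proposal is correct and follows essentially the same route as the paper: the paper does not give a detailed proof but simply states that the result follows by minor adaptations of Propositions~4.3--4.5 in \cite{AMSW}, relying on the approximation of $U$ by unions of left-neighbourhoods in $\mathcal{C}^{\ell}(\mathcal{A}_n)$ together with Theorem~\ref{thRepCanon} and the L\'evy--Khintchine formula~(\ref{eq:LK}). Your sketch spells out precisely this strategy, so there is nothing to add.
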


Proposition \ref{prop:Poisson} constitutes the key result to derive the L\'evy-It\^o decomposition from the L\'evy-Khintchine formula. 
The decomposition in the set-indexed setting is really similar to the classical real-parameter case. However, since the notion of continuity is adherent to the choice of the indexing collection, it is hopeless to obtain a split of the set-indexed L\'evy process into a continuous part and a pure jump part. 
We observe that the process is splitted into a Gaussian part without any point mass jumps (but which can be not continuous), and a Poissonian part, whose L\'evy measure counts the point mass jumps.

\begin{theorem}[L\'evy-It\^o Decomposition]\label{th:Levy-Ito}
Let $X=\{X_U;\;U\in\mathcal{A}\}$ be a set-indexed L\'evy process whose sample paths satisfy the $\mathcal{C}(u)$-ILOL property and let $(\sigma, \gamma, \nu)$ the generating triplet of $X$.\\
Then $X$ can be decomposed as
\begin{align*}
\forall\omega\in\Omega, \forall U\in\mathcal{A}, \quad
X_U(\omega) = X^{(0)}_U(\omega) + X^{(1)}_U(\omega),
\end{align*}
where
\begin{enumerate}[(i)]
\item $X^{(0)}=\{ X^{(0)}_U;\;U\in\mathcal{A} \}$ is a set-indexed L\'evy process with Gaussian increments, with generating triplet $(\sigma, \gamma, 0)$,

\item $X^{(1)}=\{ X^{(1)}_U;\;U\in\mathcal{A} \}$ is the set-indexed L\'evy process with generating triplet $(0,0,\sigma)$, defined for some $\Omega_1\in\mathcal{F}$ with $P(\Omega_1)=1$ by
\begin{align}\label{eq:LevyItoPoisson}
\forall\omega\in\Omega_1,\  &\forall U\in\mathcal{A}, \nonumber\\
X^{(1)}_U(\omega) &= \int_{|x|>1} x\ N_U(dx,\omega)
+\lim_{\epsilon\downarrow 0}\int_{\epsilon<|x|\leq 1}x \left[N_U(dx,\omega) - m(U)\right] \nu(dx),
\end{align}
where $N_U$ is defined in (\ref{eq:NU}) and the last term of (\ref{eq:LevyItoPoisson}) converges uniformly in $U\subset U_{\max}$ (for any given $U_{\max}\in\mathcal{A}$) as $\epsilon\downarrow 0$,
\item and the processes $X^{(0)}$ and $X^{(1)}$ are independent.
\end{enumerate}

\end{theorem}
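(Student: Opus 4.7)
The strategy follows Sato's construction for the real-parameter case, with Proposition \ref{prop:Poisson} playing the role of the classical decomposition of the jump measure. Fix $U_{\max}\in\mathcal{A}$ with $m(U_{\max})<\infty$. For $\epsilon\in(0,1]$, define the \emph{truncated compensated jump process}
\begin{align*}
X^{(1,\epsilon)}_U(\omega)=\int_{|x|>1} x\,N_U(dx,\omega)+\int_{\epsilon<|x|\leq 1} x\bigl[N_U(dx,\omega)-m(U)\,\nu(dx)\bigr],
\end{align*}
for $U\in\mathcal{A}$ with $U\subset U_{\max}$. By Proposition \ref{prop:jumps} the first integral is a.s. a finite sum, so $X^{(1,\epsilon)}$ is well defined; by Proposition \ref{prop:Poisson}(ii), it is a centered compound-Poisson set-indexed L\'evy process with explicit L\'evy-Khintchine exponent. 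Set $X^{(1)}_U=\lim_{\epsilon\downarrow 0}X^{(1,\epsilon)}_U$ (to be justified) and $X^{(0)}_U=X_U-X^{(1)}_U$.

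\textbf{Convergence of $X^{(1,\epsilon)}$.} For $0<\epsilon'<\epsilon\leq 1$, Proposition \ref{prop:Poisson}(ii) and the resulting L\'evy-Khintchine formula give the second-moment identity
\begin{align*}
E\bigl[(X^{(1,\epsilon')}_U-X^{(1,\epsilon)}_U)^2\bigr]=m(U)\int_{\epsilon'<|x|\leq\epsilon} x^2\,\nu(dx).
\end{align*}
Since $\int_{|x|\leq 1}x^2\,\nu(dx)<\infty$, this vanishes as $\epsilon,\epsilon'\downarrow 0$, uniformly for $U\subset U_{\max}$. To upgrade $L^2$-convergence to uniform a.s.\ convergence on a single $\Omega_1$ of full measure, I would first extract a subsequence $\epsilon_n\downarrow 0$ for which $\sum_n\|X^{(1,\epsilon_{n+1})}-X^{(1,\epsilon_n)}\|_{L^2}<\infty$; then use the orthogonality of increments of the compensated ISRM over disjoint elements of $\mathcal{C}$ together with a set-indexed Doob-type maximal inequality (applied along the strong past filtration $\mathcal{F}^*_U$) to bound $\sup_{U\subset U_{\max}}|X^{(1,\epsilon_{n+1})}_U-X^{(1,\epsilon_n)}_U|$ in $L^2$. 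A Borel–Cantelli argument, combined with separability from above (Definition \ref{basic}(4)) to reduce the supremum to the countable collection $\bigcup_n\mathcal{A}_n$, then yields a.s.\ uniform convergence on $U_{\max}$. Exhausting $\mathcal{T}$ by the sequence $(B_n)_{n\in\mathbf{N}}$ of Definition \ref{basic}(1) and taking countable intersections of the resulting full-measure events produces a single $\Omega_1$ that works for all $U\in\mathcal{A}$.

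\textbf{Identification of $X^{(0)}$ and independence.} Proposition \ref{prop:Poisson}(ii) applied to $B_\epsilon=\{\epsilon<|x|\leq 1\}\cup\{|x|>1\}$ asserts that $X^{B_\epsilon}_U$ and $X_U-X^{B_\epsilon}_U$ are independent. Writing $X_U-X^{(1,\epsilon)}_U=(X_U-X^{B_\epsilon}_U)+m(U)\int_{\epsilon<|x|\leq 1}x\,\nu(dx)$, one obtains from the set-indexed L\'evy-Khintchine formula \eqref{eq:LK} and the compound-Poisson exponent of $X^{(1,\epsilon)}_U$ that
\begin{align*}
E\bigl[e^{iz(X_U-X^{(1,\epsilon)}_U)}\bigr]=\exp\!\Bigl(m(U)\Bigl\{-\tfrac{1}{2}\sigma^2 z^2+i\gamma z+\!\int_{|x|\leq\epsilon}\!\bigl[e^{izx}-1-izx\bigr]\nu(dx)\Bigr\}\Bigr).
\end{align*}
Letting $\epsilon\downarrow 0$ and using dominated convergence on the remaining integral (bounded by $\tfrac{1}{2}z^2\int_{|x|\leq\epsilon}x^2\,\nu(dx)\to 0$) yields $E[e^{izX^{(0)}_U}]=\exp(m(U)\{-\tfrac{1}{2}\sigma^2 z^2+i\gamma z\})$. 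The same passage to the limit in characteristic functions, starting from the independence of $X^{(1,\epsilon)}$ and $X-X^{(1,\epsilon)}$ asserted in Proposition \ref{prop:Poisson}(ii), gives the joint independence of $X^{(0)}$ and $X^{(1)}$. That $X^{(0)}$ is a set-indexed L\'evy process with triplet $(\sigma,\gamma,0)$ then follows from Theorem \ref{thRepCanon}(i)--(ii), since its increment process $\Delta X^{(0)}=\Delta X-\Delta X^{(1)}$ inherits the ISRM structure, infinite divisibility, and $m$-stationarity.

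\textbf{Main obstacle.} The delicate point is Step 2: obtaining the uniform, almost sure convergence of $X^{(1,\epsilon)}\to X^{(1)}$ on a single null set. In the classical one-parameter case this follows from Doob's martingale maximal inequality in the time variable; here, the absence of a total order on $\mathcal{A}$ forces one to exploit the orthogonality of the compensated ISRM across disjoint $\mathcal{C}$-elements together with the separability from above of the indexing collection. A secondary subtlety is verifying that the limit process $X^{(1)}$, a priori defined only pathwise on $U\subset U_{\max}$, actually admits a consistent extension to all of $\mathcal{A}$, which is ensured by the exhaustion $\mathcal{T}=\bigcup_n B_n^{\circ}$.
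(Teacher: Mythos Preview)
Your proposal follows essentially the same route as the paper: construct $X^{(1)}$ as the limit of the truncated compensated jump processes, set $X^{(0)}=X-X^{(1)}$, and read off its generating triplet and the independence from Proposition~\ref{prop:Poisson}. Your treatment of the identification step via characteristic functions is in fact more explicit than the paper's, which simply asserts that the characteristic exponent of $X^{(0)}$ yields $(\sigma,\gamma,0)$.

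The one substantive difference is in the tool you use for the uniform almost sure convergence, which you correctly flag as the delicate point. The paper does not attempt a direct set-indexed Doob-type maximal inequality along $(\mathcal{F}^*_U)$; instead it invokes Wichura's maximal inequality \cite{Wichura}, a Kolmogorov-type bound for sums of independent random variables indexed by a multi-dimensional lattice, and defers the details to the proof of Theorem~4.6 in \cite{AMSW}. This is worth knowing, because a Doob inequality for strong martingales in the general set-indexed setting is not a standard off-the-shelf result, whereas Wichura's inequality exploits directly the orthogonal-increment structure of the compensated Poisson measure (the paper is implicitly relying on the adaptation carried out in \cite{AMSW} from $\mathbf{R}^N_+$ to the set-indexed framework). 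Replacing your generic maximal-inequality step with this concrete reference resolves the obstacle you identified; the rest of your argument is sound and matches the paper.
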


\begin{proof}
The first step is the definition of the process $X^{(1)}$ by (\ref{eq:LevyItoPoisson}).
As in the proof of Theorem 4.6 in \cite{AMSW}, Proposition \ref{prop:Poisson} and Wichura's maximal inequality (\cite{Wichura}) imply the almost sure uniform convergence and that $X^{(1)}$ is a set-indexed L\'evy process with generating triplet $(0,0,\nu)$.
We denote by $\Omega_1$ the set of convergence of the second term of (\ref{eq:LevyItoPoisson}), and we set $X^{(1)}_U(\omega)=0$ for all $\omega\in\Omega\setminus\Omega_1$ and all $U\in\mathcal{A}$ with $U\subset U_{\max}$.

\noindent Then we define, for all $\omega\in\Omega_1$,
$$ \forall U\in\mathcal{A},\quad X^{(0)}_U(\omega) = X_U(\omega) - X^{(1)}_U(\omega).$$
$X^{(0)}$ is a set-indexed L\'evy process with no point mass jumps, and independent of $X^{(1)}$ (Proposition \ref{prop:Poisson}). Its characteristic exponent gives the generating triplet $(\sigma,\gamma,0)$.
\end{proof}

As in the classical case of real-parameter L\'evy processes, the L\'evy-It\^o decomposition implies a characterization of presence of jumps in the sample paths.

\begin{corollary}
Let $X=\{X_U;\;U\in\mathcal{A}\}$ be a set-indexed L\'evy process whose sample paths satisfy the $\mathcal{C}(u)$-ILOL property and let $(\sigma, \gamma, \nu)$ the generating triplet of $X$.
Then the following assertions are equivalent:
\begin{enumerate}[(i)]
\item Almost surely the sample path of $X$ has no point mass jumps.
\item $X$ has Gaussian increments.
\item The L\'evy measure $\nu$ of $X$ is null.
\end{enumerate}
\end{corollary}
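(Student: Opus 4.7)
The plan is to establish the equivalence via the chain $(ii)\Leftrightarrow(iii)$, $(ii)\Rightarrow(i)$, $(i)\Rightarrow(iii)$, each implication reducing to a result already proved in the paper.

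First I would deduce $(ii)\Leftrightarrow(iii)$ from the L\'evy--Khintchine representation (\ref{eq:LK}) together with the uniqueness of the generating triplet. If $\nu=0$, then $\Psi_U(z)$ is purely quadratic in $z$, so every marginal $X_U$ is Gaussian; moreover the explicit form (\ref{eq:deltaX-fdd}) of the finite-dimensional distributions of $\Delta X$ is built from convolution powers $\mu^{m(\cdot)}$, and these are Gaussian whenever $\mu$ is, which gives joint Gaussianity of every increment vector. Conversely, if all increments of $X$ are Gaussian, then in particular each $X_U$ is Gaussian, which forces the integral term in (\ref{eq:LK}) to vanish and thus $\nu=0$.

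Next, $(ii)\Rightarrow(i)$ is immediate from Theorem \ref{th:sampleGaussian}: the sample paths of a set-indexed L\'evy process with Gaussian increments are almost surely pointwise-continuous inside any $U_{\max}\in\mathcal{A}$ of finite measure. Applying this to the increasing sequence $(B_n)_n$ of Definition \ref{basic}(1), whose interiors cover $\mathcal{T}$, and taking the intersection of the resulting countably many almost sure events yields $P(\forall t\in\mathcal{T},\ J_t(X)=0)=1$.

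For $(i)\Rightarrow(iii)$ I would exploit the Poissonian structure of the counting measure of jumps. By definition (\ref{eq:NU}), $N_U(B)(\omega)$ counts the points $t\in U$ for which $J_t(X)(\omega)\in B$. If almost surely $X$ has no point mass jumps, then for every $B\in\mathcal{B}_\epsilon$ (which excludes the origin by construction) we have $N_U(B)=0$ almost surely, hence $E[N_U(B)]=0$. But Proposition \ref{prop:Poisson}(i) gives $E[N_U(B)]=m(U)\,\nu(B)$; choosing any $U\in\mathcal{A}$ with $m(U)>0$ (which exists by Proposition \ref{HYP}) forces $\nu(B)=0$ for every $B\in\mathcal{B}_\epsilon$. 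Letting $\epsilon\downarrow 0$ and using $\nu(\{0\})=0$ then gives $\nu\equiv 0$ on $\mathcal{B}(\R)$.

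The main obstacle I anticipate is a quantifier subtlety in $(i)\Rightarrow(iii)$: one needs the single almost sure event on which $J_t(X)=0$ holds simultaneously for all $t\in\mathcal{T}$ (which is exactly what $(i)$ asserts, thanks to the separability assumption on $X$) in order to conclude $N_U(B)=0$ almost surely rather than on an $\omega$-dependent null set. Once this is set up properly, all three implications are routine applications of the L\'evy--Khintchine formula, the L\'evy--It\^o decomposition (Theorem \ref{th:Levy-Ito}), and the Gaussian pointwise-continuity theorem (Theorem \ref{th:sampleGaussian}).
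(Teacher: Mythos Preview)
Your proposal is correct and matches the paper's intended approach: the paper gives no explicit proof, stating only that the corollary follows from the L\'evy--It\^o decomposition, and your chain of implications uses precisely the ingredients of that decomposition (Theorem~\ref{th:sampleGaussian} for $(ii)\Rightarrow(i)$, Proposition~\ref{prop:Poisson}(i) for $(i)\Rightarrow(iii)$, and uniqueness in the L\'evy--Khintchine formula for $(ii)\Leftrightarrow(iii)$). One cosmetic remark: Proposition~\ref{HYP} does not actually assert the existence of $U\in\mathcal{A}$ with $m(U)>0$; this is simply assumed throughout the paper (cf.\ the hypothesis of Theorem~\ref{thRepCanon}), so you can cite that instead.
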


A set-indexed function $x:\mathcal{A}\rightarrow\R$ is said {\em piecewise constant} if any $U_{\max}\in\mathcal{A}$ admits a partition $U_{\max}=\bigcup_{1\leq i\leq m} C_i$
such that $C_1,\dots,C_m\in\mathcal{C}$, and the functions
\begin{align*}
\mathcal{C} &\rightarrow\R \\
C &\mapsto \Delta x_{C\cap C_i}
\end{align*}
are constant.

\begin{corollary}
Let $X=\{X_U;\;U\in\mathcal{A}\}$ be a set-indexed L\'evy process whose sample paths satisfy the $\mathcal{C}(u)$-ILOL property and let $(\sigma, \gamma, \nu)$ the generating triplet of $X$.
Then the following assertions are equivalent:
\begin{enumerate}[(i)]
\item Almost surely the sample path of $X$ is piecewise constant.
\item $X$ is a compound Poisson process or the null process.
\item The generating triplet of $X$ satisfies $\sigma=\gamma=0$ and $\nu(\R)<+\infty$.
\end{enumerate}
\end{corollary}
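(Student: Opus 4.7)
I would establish the equivalences as a cycle $(iii) \Rightarrow (ii) \Rightarrow (i) \Rightarrow (iii)$, the main engine being the L\'evy--It\^o decomposition (Theorem~\ref{th:Levy-Ito}) together with the characterization of set-indexed compound Poisson processes via \eqref{eq:compound_charact}. For $(iii) \Rightarrow (ii)$, I decompose $X = X^{(0)} + X^{(1)}$ according to Theorem~\ref{th:Levy-Ito}. The hypothesis $\sigma = \gamma = 0$ makes the triplet of $X^{(0)}$ equal to $(0,0,0)$, so $X^{(0)} \equiv 0$ almost surely. Finiteness of $\nu$ collapses the truncation limit in \eqref{eq:LevyItoPoisson} into an ordinary integral, and the L\'evy--Khintchine exponent \eqref{eq:LK} of $X$ reduces to $m(U) \int_{\R} (e^{izx}-1)\,\nu(dx)$; comparing with \eqref{eq:compound_charact} identifies $X$ as the set-indexed compound Poisson process of rate $c = \nu(\R)$ and jump law $\nu / \nu(\R)$ (with $\nu = 0$ corresponding to the null process).

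For $(ii) \Rightarrow (i)$, I use the Poisson representation $X_U = \sum_j X_j\,\mathbbm{1}_{\{\tau_j \in U\}}$ from Example~\ref{ex:Levy}. On a fixed $U_{\max} \in \mathcal{A}$ the number of jump times $\{j : \tau_j \in U_{\max}\}$ is Poisson with finite mean $c\, m(U_{\max})$, hence almost surely finite, say equal to $\{\tau_{j_1},\dots,\tau_{j_n}\}$. Using the dissecting property of $\mathcal{C}^{\ell}(\mathcal{A}_p)$, I choose $p$ large enough that these $n$ points lie in pairwise distinct left-neighbourhoods; this yields a partition $U_{\max} = \bigcup_{1 \leq i \leq m} C_i$ into elements of $\mathcal{C}$ with at most one $\tau_{j_k}$ per piece, and on each $C_i$ the function $C \mapsto \Delta X_{C \cap C_i}$ takes only the values $0$ and $X_{j_k}$ according to whether $\tau_{j_k} \in C$. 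Piecewise constancy follows.

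For $(i) \Rightarrow (iii)$, piecewise constancy implies only finitely many nonzero point mass jumps on $U_{\max}$, hence $N_{U_{\max}}(\R) < \infty$ almost surely; Proposition~\ref{prop:Poisson}(i) with $E[N_{U_{\max}}(\R)] = m(U_{\max})\,\nu(\R)$ then forces $\nu(\R) < \infty$. Applying Theorem~\ref{th:Levy-Ito} again, the Gaussian component $X^{(0)}_U = \sigma\, B_U + \gamma\, m(U)$ (with $B$ a set-indexed Brownian motion) must itself be piecewise constant; the Gaussian fluctuations of variance $\sigma^2 m(U)$ cannot be piecewise constant unless $\sigma = 0$, and the residual linear drift in $m(U)$ (combining $\gamma$ with the compensator $-\int_D x\,\nu(dx)$ from $X^{(1)}$) is continuous in $m(U)$ along any elementary flow by Lemma 3.3 of \cite{flow}, so piecewise constancy forces it to vanish, giving $\gamma = 0$. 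The principal obstacle lies precisely in this last disentanglement: one must show that no nontrivial linear function of $m(U)$ can survive under piecewise constancy, which hinges on the continuity of $t \mapsto m(f(t))$ along elementary flows ensuring that $m(U)$ traces a continuum and therefore cannot support a linear drift consistent with piecewise constant paths.
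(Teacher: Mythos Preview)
The paper states this corollary without proof, so there is no argument to compare against directly; your cycle $(iii)\Rightarrow(ii)\Rightarrow(i)\Rightarrow(iii)$ via the L\'evy--It\^o decomposition is the natural route and the main ideas are sound.

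There is, however, a genuine slip in your handling of the drift. In $(iii)\Rightarrow(ii)$ you assert that under $\sigma=\gamma=0$ the exponent \eqref{eq:LK} collapses to $m(U)\int_{\R}(e^{izx}-1)\,\nu(dx)$. It does not: the compensation on $D=\{|x|\le 1\}$ is still present, and what remains is
\[
\Psi_U(z)=m(U)\int_{\R}\bigl[e^{izx}-1-izx\,\mathbbm{1}_D(x)\bigr]\,\nu(dx)
= m(U)\int_{\R}(e^{izx}-1)\,\nu(dx)\;-\;iz\,m(U)\int_D x\,\nu(dx),
\]
so $X$ is a compound Poisson process \emph{plus a deterministic drift} unless $\int_D x\,\nu(dx)=0$. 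The same issue resurfaces in $(i)\Rightarrow(iii)$: you correctly argue that the combined linear drift in $m(U)$ (the $\gamma$ term together with the compensator $-\int_D x\,\nu(dx)$ coming from $X^{(1)}$) must vanish by continuity along flows, but vanishing of $\gamma-\int_D x\,\nu(dx)$ does not yield $\gamma=0$. In the standard parametrization (cf.\ \cite{sato}, Theorem~21.2) the correct condition for a compound Poisson process is that the \emph{centre} $\gamma_0:=\gamma-\int_D x\,\nu(dx)$ vanish; either the corollary's ``$\gamma=0$'' is tacitly meant in this sense, or both directions of your argument need an explicit line reconciling the two drift notions.

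A secondary point: in $(ii)\Rightarrow(i)$ the cells you produce give maps $C\mapsto\Delta X_{C\cap C_i}$ taking the \emph{two} values $0$ and $X_{j_k}$, which is not literally ``constant'' as the paper's definition requires. This appears to be an imprecision in the paper's wording rather than in your reasoning (a literally constant increment map would force $\Delta X\equiv 0$ on each cell and hence $X\equiv 0$), but it deserves a clarifying remark.
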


\

In the set-indexed framework, several definitions of martingales can be considered. We refer to \cite{Ivanoff} for a comprehensive study on them.
Here we only consider the {\em strong martingale} property: $\{X_U;\;U\in\mathcal{A}\}$ is a strong martingale if
$$ \forall C\in\mathcal{C},\quad E\left[ \Delta X_C \mid \mathcal{G}^*_C \right] = 0,$$
where $\mathcal{G}^*_C = \sigma(X_U;\; U\in\mathcal{A}, U\cap C=\emptyset)$.

\noindent The notion of strong martingale can be localized using stopping sets. A stopping set with respect to $(\mathcal{F}_U)_{U\in\mathcal{A}}$ is a function $\xi:\Omega\rightarrow\mathcal{A}(u)$ satisfying: $\{\omega: U\subseteq\xi(\omega)\} \in\mathcal{F}_U$ for all $U\in\mathcal{A}$, $\{\omega: V=\xi(\omega)\} \in\mathcal{F}_V$ for all $V\in\mathcal{A}(u)$ and there exists $W\in\mathcal{A}$ such that $\xi\subseteq W$ a.s.\\
\noindent The process $\{X_U;\;U\in\mathcal{A}\}$ is a local strong martingale if there exists an increasing sequence of stopping sets $(\xi_n)_{n\in\N}$ such that $\bigcup_{n\in\N}\mathring{\widehat{\xi_n(\omega)}} = \mathcal{T}$ for all $\omega\in\Omega$ and for all $n\in\N$, $X^{\xi_n} = \{X_{\xi_n\cap U};\;U\in\mathcal{A}\}$ is a strong martingale.

\begin{definition}
A set-indexed process $\{X_U;\;U\in\mathcal{A}\}$ is called a {\em strong semi-martingale} if it can be decomposed as
\begin{align*}
\forall U\in\mathcal{A},\quad X_U = \varphi(U) + Y_U,
\end{align*}
where $\{Y_U;\;U\in\mathcal{A}\}$ is a local strong martingale and $\varphi$ is a locally finite measure on $\mathcal{T}$.
\end{definition}

\

\begin{theorem}\label{th:semimart}
Any set-indexed L\'evy process $X=\{X_U;\;U\in\mathcal{A}\}$ whose sample paths satisfy the $\mathcal{C}(u)$-ILOL property is a strong semi-martingale.
\end{theorem}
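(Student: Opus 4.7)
The strategy is to use the L\'evy--It\^o decomposition of Theorem \ref{th:Levy-Ito} and assign the deterministic drift pieces to the locally finite measure $\varphi$ and the Gaussian plus compensated-jump pieces to the local strong martingale $Y$. Writing $X_U = X^{(0)}_U + X^{(1)}_U$ as in that theorem, with $X^{(0)}_U = \gamma\, m(U) + \sigma\, B_U$ for a set-indexed Brownian motion $B$, and $X^{(1)}_U = \int_{|x|>1} x\, N_U(dx) + M_U$ where $M_U = \lim_{\epsilon\downarrow 0} \int_{\epsilon<|x|\leq 1} x\, [N_U(dx) - m(U)\, \nu(dx)]$, yields a four-term splitting of $X$ whose individual pieces can be analyzed separately.

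The central observation is that any set-indexed process with centered independent increments is automatically a strong martingale. For any $C\in\mathcal{C}$, the $\sigma$-field $\mathcal{G}^*_C = \sigma(X_V;\;V\in\mathcal{A},\;V\cap C=\emptyset)$ is generated, via the inclusion--exclusion formula, by increments $\Delta X_L$ over left-neighborhoods $L$ of finite lattices contained in the complement of $C$; by the independence-of-increments axiom of Definition \ref{deflevy}, all such increments are independent of $\Delta X_C$. Hence $E[\Delta X_C\mid\mathcal{G}^*_C] = E[\Delta X_C]$, which vanishes whenever the increments are centered. This applies to the Brownian piece $\sigma B$ (centered Gaussian), to the compensated small-jump limit $M$ (centered $L^2$-limit of compensated Poisson integrals, by Proposition \ref{prop:Poisson}), and to the compensated big-jump process $\int_{|x|>1} x\, N_U(dx) - m(U)\int_{|x|>1} x\, \nu(dx)$ (centered set-indexed compound Poisson, again by Proposition \ref{prop:Poisson}); so their sum is a strong martingale.

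Putting this together, whenever $\int_{|x|>1} |x|\, \nu(dx) < \infty$ I set $\varphi(U) := m(U)\bigl[\gamma + \int_{|x|>1} x\, \nu(dx)\bigr]$, which is a locally finite signed measure absolutely continuous with respect to $m$, and then $Y := X-\varphi$ is a genuine strong martingale, proving the theorem in that case. When $\int_{|x|>1} |x|\, \nu(dx) = +\infty$, I would keep $\varphi(U) = \gamma\, m(U)$ and localize via stopping sets $\xi_n \uparrow \mathcal{T}$ that excise neighborhoods of the finitely many points in $B_n$ where $|J_t(X)|$ exceeds $n$ (existence of such neighborhoods uses Proposition \ref{prop:jumps} together with the $\mathcal{C}(u)$-ILOL property); on $\xi_n$ the effective L\'evy measure is $\nu|_{\{|x|\leq n\}}$, which has finite first moment, so the same compensation argument then makes $Y^{\xi_n}$ a strong martingale. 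The main obstacle is precisely this localization step: constructing $\xi_n \in \mathcal{A}(u)$ that are adapted to the minimal filtration and cover $\mathcal{T}$ in the sense $\bigcup_n \mathring{\widehat{\xi_n(\omega)}} = \mathcal{T}$, which requires careful use of the separability-from-above axiom and of the dissecting nature of the left-neighborhood system $\mathcal{C}^\ell(\mathcal{A}_n)$.
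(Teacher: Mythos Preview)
Your decomposition and the key lemma (a centered set-indexed process with independent increments is a strong martingale, which the paper cites as Theorem~3.4.1 in \cite{Ivanoff}) are exactly what the paper uses. In the paper's proof the three pieces are $X^{(0)}-\gamma\,m$, the compensated small-jump limit $Z$, and the big-jump compound Poisson $Y_U=\int_{|x|>1}x\,N_U(dx)$; for the last one the paper simply computes $E[Y_U]=E[X_0]\,\mu(U)$ and absorbs this into $\varphi$, so in effect it only argues the case $\int_{|x|>1}|x|\,\nu(dx)<\infty$ that you treat first. On that case your argument and the paper's coincide.

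Your attempt to cover the case $\int_{|x|>1}|x|\,\nu(dx)=\infty$ goes beyond what the paper writes, but the localization you sketch does not work as stated. If you fix $\varphi(U)=\gamma\,m(U)$ and choose $\xi_n$ to excise jumps of size exceeding $n$, then the stopped process $Y^{\xi_n}=X^{\xi_n}-\gamma\,m(\xi_n\cap\cdot)$ still contains the \emph{uncompensated} medium-jump piece $\int_{1<|x|\le n}x\,N_{\xi_n\cap U}(dx)$, whose mean is $m(\xi_n\cap U)\int_{1<|x|\le n}x\,\nu(dx)$; this is nonzero and depends on $n$, so $Y^{\xi_n}$ is not centered and therefore not a strong martingale. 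You cannot push that $n$-dependent compensator into a single deterministic $\varphi$, which is what the paper's definition of strong semi-martingale demands. Thus either an additional integrability hypothesis is needed, or a different localization (and possibly a broader notion of the finite-variation part) must be found; the paper itself does not resolve this.
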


\begin{proof}
According to the L\'evy-It\^o decomposition (Theorem \ref{th:Levy-Ito}), if $(\sigma,\gamma,\nu)$ is the generating triplet of $X$, the process can be decomposed in the sum of three terms :
\begin{itemize}
\item $X^{(0)}$ is a set-indexed L\'evy process with generating
triplet $(\sigma,\gamma,0)$. The process $\{ X^{(0)}_U -
\gamma.m(U);\;U\in\mathcal{A} \}$ is a mean zero process with
independent increments and therefore, a strong martingale (Theorem
3.4.1 in \cite{Ivanoff}) ;

\item $\displaystyle Y_U(\omega)=\int_{|x|>1} x\ N_U(dx,\omega)$ for all $\omega\in\Omega$.\\
According to Proposition \ref{prop:Poisson},
$\{Y_U;\;U\in\mathcal{A}\}$ is a set-indexed compound Poisson
process of L\'evy measure $\nu$. Then it admits a representation
$$ \forall U\in\mathcal{A};\quad Y_U = \sum_{j} X_j \mathbbm{1}_{\{\tau_j\in U\}},$$
where $(X_n)_{n\in\N}$ is a sequence of i.i.d. real random variables and $\tilde{N}_U=\sum_j\mathbbm{1}_{\{\tau_j\in U\}}$ ($U\in\mathcal{A}$) defines a set-indexed Poisson process independent of $(X_n)_{n\in\N}$, and with mean measure $\mu=\nu(\{|x|>1\}).m$.\\
For all $U\in\mathcal{A}$, we compute
\begin{align*}
E\left[Y_U\right] &= \sum_j E\left[X_j \mathbbm{1}_{\{\tau_j\in U\}}\right]
= \sum_j E[X_j]\ E\left[\mathbbm{1}_{\{\tau_j\in U\}}\right] \\
&= E[X_0]\ E\bigg[ \sum_j \mathbbm{1}_{\{\tau_j\in U\}} \bigg] =
E[X_0]\ E[\tilde{N}_U] = E[X_0]\ \mu(U).
\end{align*}

\item $\displaystyle Z_U(\omega)=\lim_{\epsilon\downarrow 0}\int_{\epsilon<|x|\leq 1}x \left[N_U(dx,\omega) - m(U) \nu(dx)\right]$ for all $\omega\in\Omega_1$ with $P(\Omega_1)=1$.
For all $0<\epsilon\leq 1$ and all $U\in\mathcal{A}$, we have
\begin{align*}
E\left[ \int_{\epsilon<|x|\leq 1}x\ N_U(dx,\omega) \right] =  m(U) \int_{\epsilon<|x|\leq 1}x\ \nu(dx).
\end{align*}
Then by $L^2$ convergence, we deduce $E[Z_U]=0$ for all $U\in\mathcal{A}$.\\
Theorem \ref{th:Levy-Ito} and Proposition \ref{prop:Poisson}, with
$\{|x|>1\} \cap \{ \epsilon<|x|\leq 1 \} = \emptyset$, imply that
$X^{(0)}$, $Y$ and $Z$ are independent. Then $Z$ is a mean zero
process with independent increments and therefore, a strong
martingale.
\end{itemize}

Aggregating the three points, we deduce that $X$ is the sum of a locally finite measure and a strong martingale.
\end{proof}

\

\bibliographystyle{plain}

\end{document}